\theoremstyle{plain}
\newtheorem{thrm}{Theorem}[section]
\newtheorem{lemma}[thrm]{Lemma}
\newtheorem{cor}[thrm]{Corollary}
\newtheorem{rmrk}[thrm]{Remark}
\begin{document}
\newcommand{\sn}{\mathbb{S}^{n-1}}
\newcommand{\SL}{\mathcal L^{1,p}( D)}
\newcommand{\Lp}{L^p( Dega)}
\newcommand{\CO}{C^\infty_0( \Omega)}
\newcommand{\Rn}{\mathbb R^n}
\newcommand{\Rm}{\mathbb R^m}
\newcommand{\R}{\mathbb R}
\newcommand{\Om}{\Omega}
\newcommand{\Hn}{\mathbb H^n}
\newcommand{\A}{\alpha }
\newcommand{\B}{\beta}
\newcommand{\eps}{\ve}
\newcommand{\BVX}{BV_X(\Omega)}
\newcommand{\p}{\partial}
\newcommand{\IO}{\int_\Omega}
\newcommand{\bG}{\boldsymbol{G}}
\newcommand{\bg}{\mathfrak g}
\newcommand{\bz}{\mathfrak z}
\newcommand{\bv}{\mathfrak v}
\newcommand{\Bux}{\mbox{Box}}
\newcommand{\e}{\ve}
\newcommand{\X}{\mathcal X}
\newcommand{\Y}{\mathcal Y}
\newcommand{\la}{\lambda}
\newcommand{\vf}{\varphi}
\newcommand{\rhh}{|\nabla_H \rho|}
\newcommand{\Ba}{\mathcal{B}_\beta}
\newcommand{\Za}{Z_\beta}
\newcommand{\ra}{\rho_\beta}
\newcommand{\n}{\nabla}
\newcommand{\vt}{\vartheta}
\newcommand{\its}{\int_{\{y=0\}}}
\newcommand{\py}{\partial_y^a}

\numberwithin{equation}{section}

\newcommand{\RN} {\mathbb{R}^N}
\newcommand{\Sob}{S^{1,p}(\Omega)}
\newcommand{\Dxk}{\frac{\partial}{\partial x_k}}
\newcommand{\Co}{C^\infty_0(\Omega)}
\newcommand{\Je}{J_\ve}
\newcommand{\beq}{\begin{equation}}
\newcommand{\bea}[1]{\begin{array}{#1} }
	\newcommand{\eeq}{ \end{equation}}
\newcommand{\ea}{ \end{array}}
\newcommand{\eh}{\ve h}
\newcommand{\Dxi}{\frac{\partial}{\partial x_{i}}}
\newcommand{\Dyi}{\frac{\partial}{\partial y_{i}}}
\newcommand{\Dt}{\frac{\partial}{\partial t}}
\newcommand{\aBa}{(\alpha+1)B}
\newcommand{\GF}{\psi^{1+\frac{1}{2\alpha}}}
\newcommand{\GS}{\psi^{\frac12}}
\newcommand{\HFF}{\frac{\psi}{\rho}}
\newcommand{\HSS}{\frac{\psi}{\rho}}
\newcommand{\HFS}{\rho\psi^{\frac12-\frac{1}{2\alpha}}}
\newcommand{\HSF}{\frac{\psi^{\frac32+\frac{1}{2\alpha}}}{\rho}}
\newcommand{\AF}{\rho}
\newcommand{\AR}{\rho{\psi}^{\frac{1}{2}+\frac{1}{2\alpha}}}
\newcommand{\PF}{\alpha\frac{\psi}{|x|}}
\newcommand{\PS}{\alpha\frac{\psi}{\rho}}
\newcommand{\ds}{\displaystyle}
\newcommand{\Zt}{{\mathcal Z}^{t}}
\newcommand{\XPSI}{2\alpha\psi \begin{pmatrix} \frac{x}{\left< x \right>^2}\\ 0 \end{pmatrix} - 2\alpha\frac{{\psi}^2}{\rho^2}\begin{pmatrix} x \\ (\alpha +1)|x|^{-\alpha}y \end{pmatrix}}
\newcommand{\Z}{ \begin{pmatrix} x \\ (\alpha + 1)|x|^{-\alpha}y \end{pmatrix} }
\newcommand{\ZZ}{ \begin{pmatrix} xx^{t} & (\alpha + 1)|x|^{-\alpha}x y^{t}\\
	(\alpha + 1)|x|^{-\alpha}x^{t} y &   (\alpha + 1)^2  |x|^{-2\alpha}yy^{t}\end{pmatrix}}
\newcommand{\norm}[1]{\lVert#1 \rVert}
\newcommand{\ve}{\varepsilon}
\newcommand{\D}{\operatorname{div}}
\newcommand{\G}{\mathscr{G}}
\newcommand{\W}{\tilde{W}}

\title[quantitative uniqueness etc.]{  Quantitative uniqueness for fractional heat type operators}

\author{Vedansh Arya}
\address{Tata Institute of Fundamental Research\\
Centre For Applicable Mathematics \\ Bangalore-560065, India}\email[Vedansh Arya]{vedansh@tifrbng.res.in}

\author{Agnid Banerjee}
\address{Tata Institute of Fundamental Research\\
Centre For Applicable Mathematics \\ Bangalore-560065, India}\email[Agnid Banerjee]{agnidban@gmail.com}

\thanks{Second author is supported in part by SERB Matrix grant MTR/2018/000267 and by Department of Atomic Energy,  Government of India, under
project no.  12-R \& D-TFR-5.01-0520.}


%
%
%
\keywords{}
\subjclass{35A02, 35B60, 35K05}

\maketitle
\begin{abstract}
In this paper we obtain quantitative bounds on the maximal order of vanishing for solutions to $(\partial_t - \Delta)^s u =Vu$ for $s\in [1/2, 1)$ via new Carleman estimates. Our main results  Theorem \ref{mainthrm} and Theorem \ref{mainthrm2}  can be thought of as a parabolic generalization of the  corresponding quantitative uniqueness result in the  time independent case  due to R\"uland and it can also be regarded as  a nonlocal generalization of a similar  result due to  Zhu for solutions to local parabolic equations.

\end{abstract}

\tableofcontents

\section{Introduction and the Statement of the main result}

We say that the vanishing order of a function $u$ is $\ell$ at $x_0$, if $\ell$ is the largest  integer such that $D^{\alpha} u=0$ for all $|\alpha| \leq \ell$, where $\alpha$ is a multi-index. In the papers \cite{DF1}, \cite{DF2}, Donnelly and Fefferman showed that if $u$ is an eigenfunction with eigenvalue  $\lambda$ on a  smooth, compact and connected $n$-dimensional Riemannian manifold $M$, then  the maximal vanishing order of $u$ is less than $C \sqrt{\lambda},$ where $C$ only depends on the manifold $M$. Using this estimate, they showed  that $H^{n-1}(\{x: u_{\lambda}(x)=0\})\leq C  \sqrt{\lambda},$ where $u_{\lambda}$ is the eigenfunction corresponding to $\lambda$ and therefore  gave a complete  answer to a famous  conjecture of Yau (\cite{Yau}). We note that the zero set of $u_{\lambda}$ is referred to as the nodal set. This order of vanishing is sharp. If, in fact, we consider $M = \mathbb S^n \subset \R^{n+1}$, and we take the spherical harmonic $Y_\kappa$ given by the restriction to $\mathbb S^n$ of the function $f(x_1,...,x_n,x_{n+1}) = \Re (x_1 + i x_2)^\kappa$, then one has $\Delta_{\mathbb S^n} Y_\kappa = - \lambda_\kappa Y_\kappa$, with $\lambda_\kappa = \kappa(\kappa+n-2)$, and the order of vanishing of $Y_\kappa$ at the North pole $(0,...,0,1)$ is precisely $\kappa = C \sqrt {\lambda_\kappa}$. 

In his work  \cite{Ku}  Kukavica considered the more general problem
\begin{equation}\label{e1}
\Delta u = V(x) u,
\end{equation}
where $V\in W^{1, \infty}$, and showed that the maximal vanishing order  of $u$ is bounded above by $C( 1+ ||V||_{W^{1, \infty}})$. He also conjectured that the rate of vanishing order of $u$ is less than or equal to $C(1+ ||V||_{L^{\infty}}^{1/2})$, which agrees with the Donnelly-Fefferman result when $V = - \lambda$. Employing Carleman estimates,  Kenig in \cite{K} showed that the rate of  vanishing order  of $u$  is less than $C(1+ ||V||_{L^{\infty}}^{2/3})$, and furthermore the exponent $\frac{2}{3}$ is sharp for complex  potentials $V$ based on a counterexample of Meshov (see \cite{Me}).

Not so long ago, the rate of vanishing order of $u$ has been shown to be less than $C(1+ ||V||_{W^{1, \infty}}^{1/2})$  independently by Bakri in \cite{Bk} and Zhu in \cite{Zhu1}. Bakri's approach is based on  an extension of the Carleman method in \cite{DF1}.  On the other hand, Zhu's approach is based on a variant of the frequency function approach  employed by Garofalo and Lin in \cite{GL1}, \cite{GL2}), in the context of strong  unique  continuation problems. The approach of Zhu has been  subsequently extended in \cite{BG2}  to variable coefficient principal part with Lipschitz coefficients  where a similar quantitative  uniqueness result at the boundary of $C^{1, Dini}$ domains has been obtained. We would also like to mention that  in \cite{Zhu2}, an analogous  quantitative uniqueness result  has been established for solutions  to parabolic equations of the type
\[
\D(A(x,t) \nabla u) - u_t= Vu,\]
where $V \in C^{1}$  and $A(x,t) \in C^{2}$ by an adaption of an approach due to Vessella in \cite{Ve} ( see also \cite{EV}).
Now for nonlocal equations of the type 
\[
(-\Delta)^s u = Vu,\]
R\"uland in \cite{Ru} showed that the vanishing order is proportional to $C_1||V||_{C^{1}}^{1/2s} + C_2$  which in the limit as $s \to 1$, exactly reproduces the result of Donnelly and Fefferman. See also \cite{Zhu0} for vanishing order estimates for Steklov eigenfunctions which via the extension approach of Caffarelli and Silvestre in \cite{CS},   is relevant to the case $s=1/2$.  See also \cite{BL} for  earlier results on quantitative uniqueness for Steklov eigenvalue problems. We also refer to \cite{FF, Ru1} for other qualitative strong unique continuation results in the nonlocal elliptic setting.

In this work, we are interested in quantitative uniqueness for the following nonlocal equation
\begin{align}\label{e0}
	(\partial_t-\Delta)^s u =V(x,t)u \hspace{2mm} \text{in} \hspace{2mm} \R^n \times \R, \ \text{$s \in [1/2, 1)$},
\end{align} 
where 
\begin{equation}\label{vasump}
\begin{cases}
\text{$V  \in C_{x,t}^{1}(\R^n \times \R)$ for $s>1/2$}
\\
\text{and $V \in C_{x}^{1, \alpha} (\R^n \times \R) \cap C_{x,t}^{1}(\R^n \times \R)$ for some $\alpha >0$ when $s=1/2$}.\end{cases}\end{equation}   We note that the qualitative $C^{1, \alpha}$ assumption on $V$ that we enforce is only to ensure that the solution to the extended problem \eqref{extpr} is $C^{2}$ in the tangential directions.  
More precisely, we study functions $u \in \operatorname{Dom}(H^s) = \{u\in L^2(\Rn \times \R)\mid H^s u \in L^2(\Rn \times \R)\}$ that  satisfy \eqref{e0} above.
Our main result is the following. 
\begin{thrm}\label{mainthrm}
	Let $u$ be a non-trivial solution of \eqref{e0} with $||u||_{L^2(\R^n\times \R)} \le 1$ and where $V$ satisfies the assumptions in \eqref{vasump}.   Then there exist  constants $C_1= C_1(n,s)$ and $C_2=C_2(n, s, u)$
	such that for all $\rho$ small enough, we have 
	\begin{equation}\label{main1} ||u||_{C^2(B_{\rho} \times (-2,2) )}  \geq C_1 \rho^{A_0},\end{equation}
	where $A_0=C_1||V||_{C^{1}_{x,t}(\Rn \times \R)}^{1/2s}+C_2$. Over here, the $C^{2}$ norm of $u$ is  the parabolic $C^{2}$ norm  defined as
	\begin{align*}
	&|| u ||_{C^2(B_{\rho} \times (-1,0) )} \overset{def}{=}  || u ||_{L^{\infty}(B_{\rho} \times (-1,0) )} + ||\nabla_x u||_{L^{\infty}(B_{\rho} \times (-1,0) )} + || \nabla_x^2 u||_{L^{\infty}(B_{\rho} \times (-1,0) )} +  ||u_t||_{L^{\infty}(B_{\rho} \times (-1,0) )}.\end{align*}
	
	\end{thrm}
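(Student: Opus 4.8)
The plan is to pass to the Caffarelli--Silvestre type extension for the fractional heat operator, reducing the nonlocal equation \eqref{e0} to a local degenerate parabolic equation in one additional space variable $y > 0$, and then to derive the vanishing order estimate from a parabolic Carleman inequality tailored to that extended operator. Writing $U(x,y,t)$ for the extension of $u$, $U$ solves a problem of the form $\partial_t U - \Delta_x U - \tfrac{a}{y}\partial_y U - \partial_{yy}U = 0$ in $\{y>0\}$ (with $a = 1-2s$) together with the Neumann-type condition $-\lim_{y\to 0^+} y^a \partial_y U = c_s\, V(x,t) u$ on $\{y=0\}$; the qualitative $C^{1,\alpha}_x$ hypothesis on $V$ in \eqref{vasump} is exactly what guarantees $U \in C^2$ in the $x$-directions up to the slice, which is needed to make the boundary term manipulations rigorous and to give meaning to \eqref{main1}. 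First I would record the relevant regularity and energy estimates for $U$ in terms of $\|u\|_{L^2}$, normalizing so that $\|u\|_{L^2(\Rn\times\R)}\le 1$ controls a weighted $L^2$ norm of $U$ on a fixed region.

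Next I would establish the key Carleman estimate: an inequality of the form
\begin{equation*}
\tau^{k}\left\| e^{\tau\phi}\, \mathcal{G}^{m} U\right\|_{L^2_{a}}^2 \;\le\; C\left\| e^{\tau\phi}\,\left(\partial_t - \Delta_x - \tfrac{a}{y}\partial_y - \partial_{yy}\right)U\right\|_{L^2_{a}}^2 \;+\; (\text{boundary terms on }\{y=0\}),
\end{equation*}
valid for all large $\tau$ and all $U$ compactly supported in an appropriate parabolic neighborhood, where $\phi$ is a suitable weight (a perturbation of $-|X|^2/(4t)$ composed with the natural parabolic gauge adapted to the anisotropic scaling $y\sim |x|^{?}$), $L^2_a$ is the $y^a\,dy\,dx\,dt$-weighted space, and $\mathcal{G}$ is the relevant vector field. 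This is the analogue, in the parabolic nonlocal setting, of R\"uland's elliptic Carleman estimate and of Zhu's parabolic estimates, and deriving it — getting the commutator terms to have the right sign and absorbing the degeneracy at $y=0$ and the singularity of $\phi$ in $t$ — is the main obstacle. I would handle it by conjugating the operator by $e^{\tau\phi}$, splitting into symmetric and antisymmetric parts, and computing the commutator, choosing $\phi$ so that the dominant commutator term is coercive; the weighted Hardy and trace inequalities for the measure $y^a\,dy$ control the boundary contributions in terms of $\|V\|_{C^1}$ times the bulk terms, which forces the threshold $\tau \gtrsim \|V\|^{1/2s}$.

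With the Carleman estimate in hand I would run the standard three-step argument. First, apply the estimate to $\eta U$ for a cutoff $\eta$ supported in a parabolic annulus around the origin, using the equation and the boundary condition to bound the right-hand side by $\|V\|_{C^1}$ times a weighted norm of $U$ plus commutator terms supported where $\nabla\eta \ne 0$. Second, choose $\tau \sim \|V\|^{1/2s}_{C^1_{x,t}} + (\text{const depending on }u)$ to absorb the potential term into the left side, obtaining a three-ball (in the parabolic, weighted sense) inequality of doubling type: $\|U\|_{B_{2r}} \le C\, e^{C\tau}\,\|U\|_{B_r}^{\theta}\,\|U\|_{B_1}^{1-\theta}$. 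Third, iterate this doubling inequality from a fixed scale down to scale $\rho$, which yields $\|U\|_{B_\rho} \ge C_1\rho^{A_0}$ with $A_0 = C_1\|V\|^{1/2s}_{C^1_{x,t}} + C_2$; finally, trace back to $u$ on $\{y=0\}$ and upgrade the $L^2$ (or $L^\infty$) lower bound to the parabolic $C^2$ lower bound in \eqref{main1} via the interior Schauder estimates for the extended equation, using the $C^{1,\alpha}$ regularity of $V$. The non-triviality of $u$ enters only to guarantee that the relevant norm of $U$ on the fixed scale $B_1$ is strictly positive, which is where the $u$-dependence of $C_2$ comes from.
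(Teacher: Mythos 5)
Your proposal correctly identifies the extension step and the general philosophy of deriving a vanishing order bound from a Carleman inequality for the degenerate parabolic extension operator. However, there is a genuine gap in the final step, and it is not a small one.

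Your plan produces a lower bound on a weighted bulk norm $\|y^{a/2}U\|_{L^2(\mathbb B_\rho\times I)}$ of the extension $U$ at scale $\rho$. You then propose to ``trace back to $u$ on $\{y=0\}$ and upgrade the $L^2$ lower bound to the $C^2$ lower bound via interior Schauder estimates.'' This does not work, and the direction of the inequality is the problem: a lower bound on the bulk $L^2$ norm of $U$ does not, by itself, yield a lower bound on any norm of the trace $u$ on $\{y=0\}$. Trace and Schauder estimates run the wrong way for this purpose — they bound boundary quantities by bulk quantities, or higher norms by lower norms on a larger set, but they cannot rule out the scenario in which $U$ is of order $\rho^{A_0}$ in the interior of the half-space while $u$ and its derivatives vanish to much higher order on the thin set. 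What is actually needed is a \emph{propagation of smallness from the boundary to the bulk}: an estimate asserting that if $u$, $\nabla_x u$, $\nabla_x^2 u$, $u_t$ are small on a thin ball and $U$ is globally controlled, then $\|y^{a/2}U\|_{L^2}$ is small on a slightly smaller thick cylinder. Combining \emph{this} inequality with the bulk lower bound is what forces the $C^2$ norm of $u$ to be at least $C_1\rho^{A_0}$. The paper achieves this with a \emph{second} Carleman estimate (Lemma \ref{carlbdr}), of a completely different type from the bulk one: the weight is $\phi(x,y)=-|x|^2/4 + \gamma\bigl[-y^{a+1}/(a+1)+y^2/2\bigr]$ (a parabolic adaptation of the R\"uland--Salo weight), not the singular radial weight $|X|^{-\beta}e^{\alpha|X|^\epsilon}$ used in the bulk Carleman of Lemma \ref{carlmanbulk}, and not your proposed $-|X|^2/(4t)$.

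This second estimate is also precisely where the restriction $s\ge 1/2$ enters (via the sign of $a(a-1)$ in the boundary terms and via the Hardy inequality step), so your outline, which attributes the threshold $\tau\gtrsim\|V\|^{1/2s}$ entirely to the first Carleman estimate, would not explain why the theorem is stated only for $s\in[1/2,1)$. Two further, smaller, points. First, the bulk Carleman argument in the parabolic case is not a clean three-ball/doubling iteration: the term coming from $\eta_t$ in the time cutoff is unfavorable and needs the Vessella-type exponential cutoff and absorption argument (\eqref{vc}), which in turn requires the coefficient $\alpha^3$ in front of the left-hand side of the Carleman estimate. Second, the passage from the propagation-of-smallness lemma to the theorem uses an auxiliary extension $W$ of the cutoff boundary data $\eta u$ together with the identity $\|H^s(\eta u)\|_{L^2}\lesssim\|u\|_{C^2}$ (cf.\ Lemma \ref{bbinterf}); this is what converts the Neumann trace data into a $C^2$ norm of $u$ and is an essential ingredient missing from your sketch.
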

\begin{rmrk}
We note that it suffices to assume that \eqref{e0} holds locally for the validity of  Theorem \ref{mainthrm}. We have  assumed that \eqref{e0} holds globally in $\Rn \times \R$ only for the simplicity of the exposition. We also note that the qualitative $C^{1, \alpha}$ assumption on $V$ that we impose when $s=1/2$ is to ensure that the solution to \eqref{e0}  is $C^{2}$ for the estimate in Theorem \ref{mainthrm} to make sense. In this context, we refer to Lemma \ref{reg1} below. \end{rmrk}
In the case when $V \in C^{2}_{x,t}(\Rn \times \R)$, we have the following improvement of Theorem \ref{mainthrm} where the $C^2$ norm of $u$  in the estimate \eqref{main1} can be replaced by the $L^2$ norm.
\begin{thrm}\label{mainthrm2}
Let $u$ be a non-trivial solution of \eqref{e0} with $V \in C^2_{x,t}$ such that $||u||_{L^2(\R^n\times \R)} \le 1.$  Then there exist  constants $\mathcal C_1=  \mathcal C_1
( n, s)$ and $\mathcal C_2 = \mathcal C_2(u,n,s)$ such that for all $\rho$ small enough, we have 
\begin{equation}\label{main2} ||u||_{L^2(B_{\rho} \times (-2,2) )}  \geq \mathcal C_1 \rho^{M},\end{equation}
where $M=\mathcal C_1||V||_{C^{2}_{x,t}(\Rn \times \R)}^{1/2s}+ \mathcal C_2$.
\end{thrm}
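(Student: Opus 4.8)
The strategy is to bootstrap Theorem \ref{mainthrm} by upgrading the lower bound from the $C^2$ norm to the $L^2$ norm, using elliptic/parabolic regularity for the extension problem together with the extra regularity $V\in C^2_{x,t}$. First I would recall that by the Caffarelli--Silvestre--Stinga--Torrea type extension (cf.\ \eqref{extpr} and Lemma \ref{reg1}), a solution $u$ of \eqref{e0} is the trace at $y=0$ of a function $U=U(x,y,t)$ solving a degenerate parabolic equation in the half-space $\{y>0\}$ with weight $y^a$, $a=1-2s$, and with Neumann-type data proportional to $Vu$ on $\{y=0\}$. When $V\in C^2_{x,t}$, the standard Schauder-type estimates for this weighted equation give that $U$, and in particular its trace $u$, is $C^2$ in $(x,t)$ up to $\{y=0\}$ with quantitative bounds: on a parabolic cylinder $Q_{2\rho}=B_{2\rho}\times(-4,4)$ one has
\begin{equation}\label{regbd}
\|u\|_{C^2(B_\rho\times(-2,2))}\ \le\ C\big(1+\|V\|_{C^2_{x,t}}\big)^{\theta}\,\rho^{-d}\,\|u\|_{L^2(B_{2\rho}\times(-4,4))}
\end{equation}
for some dimensional exponents $\theta,d>0$ depending only on $n,s$. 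This is the key interpolation/regularity input; obtaining it cleanly in the degenerate-weighted parabolic setting, and tracking the precise dependence on $\|V\|_{C^2_{x,t}}$, is the main technical obstacle, since one must combine the Muckenhoupt-weighted Schauder theory with a covering/scaling argument and with the $L^2$-$L^\infty$ smoothing of the (extended) equation.

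Granting \eqref{regbd}, the proof is immediate. By Theorem \ref{mainthrm} applied with the potential $V\in C^2_{x,t}\subset C^1_{x,t}$ (note $\|V\|_{C^1_{x,t}}\le\|V\|_{C^2_{x,t}}$), there are $C_1=C_1(n,s)$, $C_2=C_2(n,s,u)$ with
\begin{equation}\label{step2}
\|u\|_{C^2(B_\rho\times(-2,2))}\ \ge\ C_1\,\rho^{A_0},\qquad A_0=C_1\|V\|_{C^1_{x,t}}^{1/2s}+C_2,
\end{equation}
for all $\rho$ small. Combining \eqref{regbd} (with $\rho$ replaced by, say, $\rho$ and the larger cylinder reabsorbed) and \eqref{step2} gives
\begin{equation*}
\|u\|_{L^2(B_{2\rho}\times(-4,4))}\ \ge\ c\,\big(1+\|V\|_{C^2_{x,t}}\big)^{-\theta}\,\rho^{d}\,\|u\|_{C^2(B_\rho\times(-2,2))}\ \ge\ c\,C_1\,\big(1+\|V\|_{C^2_{x,t}}\big)^{-\theta}\,\rho^{\,d+A_0}.
\end{equation*}
Finally I would absorb the polynomial prefactor $\big(1+\|V\|_{C^2_{x,t}}\big)^{-\theta}$ and the shift $d$ into the exponent by the elementary inequality $\rho^{d}\,t^{-\theta}\ge \rho^{\,d+\theta}$ valid for $t\le \rho^{-1}$ (and, if $\|V\|_{C^2_{x,t}}$ is large, by enlarging $d$ to $d+\theta\log\|V\|_{C^2_{x,t}}/|\log\rho|$, which is dominated by $\|V\|_{C^2_{x,t}}^{1/2s}$ after possibly increasing $C_1$). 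Relabelling $2\rho\mapsto\rho$ and $(-4,4)$ back to $(-2,2)$ by a further scaling/covering step, this yields \eqref{main2} with $M=\mathcal C_1\|V\|_{C^2_{x,t}}^{1/2s}+\mathcal C_2$ for suitable $\mathcal C_1=\mathcal C_1(n,s)$, $\mathcal C_2=\mathcal C_2(u,n,s)$.

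In summary, Theorem \ref{mainthrm2} follows from Theorem \ref{mainthrm} by a single quantitative regularity estimate \eqref{regbd} for the extension problem plus elementary exponent bookkeeping; essentially all the work is in \eqref{regbd}, i.e.\ in showing that the $C^2_{x,t}$ regularity of $V$ propagates to a scale-invariant $L^2\!\to\!C^2$ bound for $u$ with controlled dependence on $\|V\|_{C^2_{x,t}}$.
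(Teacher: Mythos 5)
Your proposal hinges on the local regularity estimate you label \eqref{regbd},
\[
\|u\|_{C^2(B_\rho\times(-2,2))}\ \le\ C\big(1+\|V\|_{C^2_{x,t}}\big)^{\theta}\,\rho^{-d}\,\|u\|_{L^2(B_{2\rho}\times(-4,4))},
\]
and this is precisely where the argument breaks down: such a local $L^2\!\to\!C^2$ bound \emph{cannot hold} for the nonlocal equation \eqref{e0}. Weighted Schauder theory for the extension problem \eqref{extpr} is indeed local in $(x,y,t)$, but it controls $\|U\|_{C^2}$ on $\mathbb B_\rho$ by the \emph{bulk} weighted norm $\|y^{a/2}U\|_{L^2(\mathbb B_{2\rho})}$, not by the \emph{trace} norm $\|u\|_{L^2(B_{2\rho})}$. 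The two are genuinely incomparable: if $u\equiv 0$ near the origin but is nonzero far away, the extension $U$ is nonzero in $\mathbb B_\rho$ (the extension kernel is global), so $\|y^{a/2}U\|_{L^2(\mathbb B_\rho)}>0$ while $\|u\|_{L^2(B_{2\rho})}=0$, and any fixed constant in \eqref{regbd} is violated. This is not a ``technical obstacle'' but a structural feature of nonlocal operators, and it is the reason the paper does not attempt a one-sided regularity estimate of this type.

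The paper instead uses a \emph{two-parameter interpolation inequality} (Lemma \ref{interpolation}) whose right-hand side is $\eta^s(\text{high-regularity bulk terms for }U)+\eta^{-1}\|u\|_{L^2}$, combined with the improved global regularity bounds \eqref{ret} that are available precisely because $V\in C^2_{x,t}$. The point is that the bulk contribution of $U$ is multiplied by a small $\eta^s$ (and bounded globally by $\|u\|_{L^2(\R^{n+1})}\le 1$), so it can be absorbed into the vanishing-order lower bound by choosing $\eta$ small enough, at the cost of a quantitative factor $\eta^{-1}$ in front of the local $L^2$ norm; see \eqref{nop}--\eqref{khg}. Also, rather than invoking the stated $C^2$ lower bound of Theorem \ref{mainthrm}, the paper extracts from its proof the stronger $W^{2,2}$ lower bound \eqref{df1}, which is what the Fourier-side trace computation in Lemma \ref{bbinterf} actually needs and which meshes with the $L^2$-type interpolation inequality. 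Your exponent bookkeeping is in the right spirit, but without a valid substitute for \eqref{regbd} the argument does not close.
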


Our results in Theorem \ref{mainthrm} and Theorem \ref{mainthrm2}  can thus be viewed as a generalization of the results in \cite{Ru} and \cite{Zhu2}.  To the best of our knowledge, this is the first quantitative uniqueness result for fractional heat type operators. To provide some further perspective, we mention that for global solutions of the nonlocal equation \eqref{e0} a backward space-time strong unique continuation theorem was previously established by one  of us with Garofalo in \cite{BG}. Such result represented the nonlocal counterpart of the one first obtained  by Poon in \cite{Po} for the local case $s=1$. Very recently in \cite{ABDG}, a space like unique continuation property for local solutions  to equations of the type \eqref{e0} has been established by   both of us in a joint work with Danielli and Garofalo which constitutes the nonlocal counterparts of the space like strong unique continuation results in \cite{EF, EFV} for the local case $s=1$.
The following are the key steps in the proof of our main result.

\medskip

\emph{Step 1:} We first establish a quantitative  Carleman estimate  for solutions to a certain parabolic extension problem  ( see \eqref{extpr} below) using which we derive a quantitative vanishing order estimate at the bulk. 

\medskip

\emph{Step 2:} We then establish a propagation of smallness estimate  from the boundary to the bulk by  means of a  new Carleman estimate which is a  subtle generalization of the Carleman estimate in \cite{RS}  to the time dependent case. It turns out that such  an estimate only  holds for $s \geq 1/2$. This is precisely where we require $s\geq1/2$.

\medskip

\emph{Step 3:} Then by combining the bulk quantitative estimate  for a local extension problem  as in \emph{Step 1} with the propagation of smallness estimate in \emph{Step 2}, we establish a vanishing order estimate at the boundary for the extension problem \eqref{extpr} which implies the estimate in Theorem \ref{mainthrm} for the nonlocal problem \eqref{e0}.

\emph{Step 4:} Theorem \ref{mainthrm2} is then obtained from Theorem \ref{mainthrm} by means  of  interpolation type inequalities proven in Lemma \ref{interpolation} which is well suited for our  parabolic situation combined with improved  regularity estimates in Lemma \ref{reg1} that we obtain  when $V \in C^2$. We note that similar interpolation type inequalities have previously appeared  in   \cite{BL, Ru, RS}.

The reader will find that  although the  proof of Theorem \ref{mainthrm}  is inspired by ideas in \cite{Bk, BGM, Ru, Zhu2, Ve}, it has  nevertheless required  some delicate adaptations in our situation because of new novel challenges in the nonlocal parabolic case.

The paper is organized as follows. In section \ref{s:n}, we introduce some basic notations and gather some preliminary results that are relevant to our work.   In section \ref{s:m}, we prove our key results, Lemmas \ref{carlmanbulk}, \ref{bulkdf}, \ref{carlbdr} and \ref{bbinterf} which constitute the novel part of our work from which the main results follow.

In closing, we would like to mention  that the study of the fractional heat type operators as well as the related  extension problem has received a lot of attention in recent times, see  for instance \cite{Au}, \cite{ACM}, \cite{AT}, \cite{BDGP1}, \cite{BDGP2}, \cite{BDS}, \cite{BGMN},  \cite{BSt}, \cite{DP}, \cite{Gcm}, \cite{HSSW}, \cite{LLR} and \cite{LN}.  
\section{Notations and Preliminaries}\label{s:n}
In this section we introduce the relevant notation and gather some auxiliary  results that will be useful in the rest of the paper. Generic points in $\Rn \times \R$ will be denoted by $(x_0, t_0), (x,t)$, etc. For an open set $\Omega\subset \Rn_x\times \R_t$ we indicate with $C_0^{\infty}(\Omega)$ the set of compactly supported smooth functions in $\Om$.  The symbol $\mathscr S(\R^{n+1})$ will denote the Schwartz space of rapidly decreasing functions in $\R^{n+1}$. For $f\in \mathscr S(\R^{n+1})$ we denote its Fourier transform by 
\[
 \hat f(\xi,\sigma) = \int_{\Rn\times \R} e^{-2\pi i(\langle \xi,x\rangle + \sigma t)} f(x,t) dx dt = \mathscr F_{x\to\xi}(\mathscr F_{t\to\sigma} f).
\]
The heat operator in $\R^{n+1} = \Rn_x \times \R_t$ will be denoted by $H = \p_t - \Delta_x$. Given a number $s\in (0,1)$ the notation $H^s$ will indicate the fractional power of $H$ that in  \cite[formula (2.1)]{Sam} was defined on a function $f\in \mathscr S(\R^{n+1})$ by the formula
\begin{equation}\label{sHft}
\widehat{H^s f}(\xi,\sigma) = (4\pi^2 |\xi|^2 + 2\pi i \sigma)^s\  \hat f(\xi,\sigma),
\end{equation}
with the understanding that we have chosen the principal branch of the complex function $z\to z^s$. We then introduce the natural domain for the operator $H^s$.
\begin{align}\label{dom}
\mathscr H^{2s} & =  \operatorname{Dom}(H^s)   = \{f\in \mathscr S'(\R^{n+1})\mid f, H^s f \in L^2(\R^{n+1})\}
\\
&  = \{f\in L^2(\R^{n+1})\mid (\xi,\sigma) \to (4\pi^2 |\xi|^2 + 2\pi i \sigma)^s  \hat f(\xi,\sigma)\in L^2(\R^{n+1})\},
\notag
\end{align} 
where the second equality is justified by \eqref{sHft} and Plancherel theorem. 
 It is important to keep in mind that definition \eqref{sHft} is equivalent to the one based on Balakrishnan formula (see \cite[(9.63) on p. 285]{Samko})
\begin{equation}\label{balah}
H^s f(x,t) = - \frac{s}{\Gamma(1-s)} \int_0^\infty \frac{1}{\tau^{1+s}} \big(P^H_\tau f(x,t) - f(x,t)\big) d\tau,
\end{equation}
where we have denoted by
\begin{equation}\label{evolutivesemi}
P^H_\tau f(x,t) = \int_{\Rn} G(x-y,\tau) f(y,t-\tau) dy = G(\cdot,\tau) \star f(\cdot,t-\tau)(x)
\end{equation}
the \emph{evolutive semigroup}, see \cite[(9.58) on p. 284]{Samko}. We refer to Section 3 in \cite{BG} for relevant details.

Henceforth, given a point $(x,t)\in \R^{n+1}$ we will consider the thick half-space $\R^{n+1} \times \R^+_y$. At times it will be convenient to combine the additional variable $y>0$ with $x\in \Rn$ and denote the generic point in the thick space $\Rn_x\times\R^+_y$ with the letter $X=(x,y)$. For $x_0\in \Rn$ and $r>0$ we let $B_r(x_0) = \{x\in \Rn\mid |x-x_0|<r\}$,
$\mathbb B_r(x_0,0)=\{X = (x,y) \in \R^n \times \R^{+}\mid |x-x_0|^2 + y^2 < r^2\}$ (note that this is the upper half-ball).
When the center $x_0$ of $B_r(x_0)$ is not explicitly indicated, then we are taking $x_0 = 0$. Similar agreement for the thick half-balls $\mathbb B_r(x_0,0)$. We will also use the $\mathbb Q_{r}$ for the set $\mathbb B_r \times (t_0-r^2,t_0+r^2)$ and $Q_r$ for the set  $ B_r \times (t_0-r^2,t_0+r^2).$
For notational ease $\nabla U$ and  $\operatorname{div} U$ will respectively refer to the quantities  $\nabla_X U$ and $ \operatorname{div}_X U$.  The partial derivative in $t$ will be denoted by $\p_t U$ and also at times  by $U_t$. The partial derivative $\partial_{x_i} U$  will be denoted by $U_i$. At times,  the partial derivative $\partial_{y} U$  will be denoted by $U_{n+1}$.

 We next introduce the extension problem associated with $H^s$.  
 Given a number $a\in (-1,1)$ and a $u:\R^n_x\times \R_t\to \R$ we seek a function $U:\R^n_x\times\R_t\times \R_y^+\to \R$ that satisfies the boundary-value problem
\begin{equation}\label{la}
\begin{cases}
\mathscr{L}_a U \overset{def}{=} \partial_t (y^a U) - \operatorname{div} (y^a \nabla U) = 0,
\\
U((x,t),0) = u(x,t),\ \ \ \ \ \ \ \ \ \ \ (x,t)\in \R^{n+1}.
\end{cases}
\end{equation}
The most basic property of the Dirichlet problem \eqref{la} is that if $s = \frac{1-a}2\in (0,1)$ and $u \in \text{Dom}(H^{s})$, then we have the following convergence  in $L^{2}(\R^{n+1})$
\begin{equation}\label{np}
2^{-a}\frac{\Gamma(\frac{1-a}{2})}{\Gamma(\frac{1+a}{2})} \py U((x,t),0)=  - H^s u(x,t),
\end{equation}
where $\py$ denotes the weighted normal derivative
\begin{equation}\label{nder}
\py U((x,t),0)\overset{def}{=}   \operatorname{lim}_{y \to 0^+}  y^a \partial_y U((x,t),y).
\end{equation}

When $a = 0$ ($s = 1/2$) the problem \eqref{la} was first introduced in \cite{Jr1} by Frank Jones, who in such case also constructed the relevant Poisson kernel and proved \eqref{np}. More recently Nystr\"om and Sande in \cite{NS} and Stinga and Torrea in \cite{ST} have independently extended the results in \cite{Jr1} to all $a\in (-1,1)$. 

Therefore, if $u$ is a solution to \eqref{e0},  then it follows that the extended $U$ is a weak solution to   
\begin{equation}\label{wk}
\begin{cases}
\mathscr{L}_a U=0 \ \ \ \ \ \ \ \ \ \ \ \ \ \ \ \ \ \ \ \ \ \ \ \ \ \ \ \ \ \ \text{in}\ \R^{n+1}\times \R^+_y,
\\
U((x,t),0)= u(x,t)\ \ \ \ \ \ \ \ \ \ \ \ \ \ \ \ \text{for}\ (x,t)\in \R^{n+1},
\\
\py U((x,t),0)=  V(x,t) u(x,t)\ \ \ \ \text{for}\ (x,t)\in \R^{n+1}.
\end{cases}
\end{equation}
We note that the $V$ in \eqref{wk} differs from the $V$ in \eqref{e0} by a multiplicative constant.  For notational purposes it will be convenient to work with the following backward version of problem \eqref{wk} \begin{equation}\label{extpr}
\begin{cases}
y^a \partial_t U + y^a \operatorname{div}(y^a \nabla U)=0\  \text{in} \ \{y>0\},
\\	
U((x,t),0)= u(x,t)
\\
\py U((x,t),0)= V(x,t) u(x,t).
\end{cases}
\end{equation}
We note that the former can be transformed into the latter by changing $t \to -t$.

We now record a regularity result which is crucial to our analysis.  Prior to that, we would like to mention  that for relevant notions of  parabolic $C^{k}$ and$C^{k, \alpha}$ spaces, we refer the reader to chapter 4 in \cite{Li}.

\begin{lemma}\label{reg1}
Let $U$  be a weak solution of \eqref{extpr} where $V$ satisfies \eqref{vasump}. Then there exists $\alpha'>0$ such that one has up to the thin set $\{y=0\}$ 
\[
U_i,\ U_t,\ y^a U_y\ \in\ C^{\alpha'}_{loc}.
\]
Moreover, the relevant H\"older norms  over a compact set $K$ are bounded by $\int U^2 y^a dX dt$ over a larger set $K'$ which contains $K$. We also  have that $\nabla_x^2 U \in C^{\alpha'}_{loc}$ up to the thin  set $\{y=0\}$. Furthermore, when $V \in C^{2}_{x,t}(\Rn \times \R)$, we have that the following estimate holds for $i, j=1, ..,n$
\begin{equation}\label{ret}
\int_{\mathbb B_1 \times (-1, 0]} (U_t^2+ U_{tt}^2) y^a +\int_{\mathbb B_1 \times (-1, 0]} |\nabla U_{t}|^2 y^a  +  \int_{\mathbb B_1 \times (-1, 0]} |\nabla U_{ij}|^2 y^a \leq C( 1+ ||V||_{C^2}) \int_{\mathbb B_2 \times (-4, 0]} U^2 y^a.\end{equation}

\end{lemma}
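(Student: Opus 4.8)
The plan is to prove Lemma \ref{reg1} by leveraging the known De Giorgi--Nash--Moser / Schauder theory for degenerate parabolic equations with $A_2$-Muckenhoupt weight $y^a$ (as developed in \cite{ST}, and in the parabolic context in \cite{BDGP1}, \cite{BDGP2}, \cite{NS}), applied iteratively to the derivatives of $U$. I would first observe that $U$ solves the degenerate parabolic equation $\mathscr{L}_a U = 0$ in the interior $\{y>0\}$ with the Neumann-type condition $\py U = Vu$ on the thin set. The standard even reflection across $\{y=0\}$, combined with the fact that the reflected equation picks up a non-homogeneous right-hand side coming from $Vu$, reduces everything to interior regularity for $\mathscr{L}_a$-type equations with bounded (then Hölder) right-hand sides. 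The base case --- $U_i, U_t, y^a U_y \in C^{\alpha'}_{loc}$ up to $\{y=0\}$ with the quantitative bound by $\int U^2 y^a$ over a slightly larger set --- is essentially the boundary Schauder estimate for the extension problem: the tangential derivatives $U_i$ and the time derivative $U_t$ satisfy the same degenerate equation (since the coefficients are translation-invariant in $x$ and $t$), while $y^a U_y$ satisfies the ``conjugate'' equation with weight $y^{-a}$, which is still $A_2$; the Neumann datum $Vu \in C^{\alpha}_x \cap C^1_{x,t}$ under \eqref{vasump} supplies the needed regularity on the right-hand side after reflection.

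Next, for $\nabla^2_x U \in C^{\alpha'}_{loc}$ up to $\{y=0\}$, I would differentiate the equation twice in the tangential variables: $U_{ij}$ solves $y^a \partial_t U_{ij} + y^a \operatorname{div}(y^a \nabla U_{ij}) = 0$ in $\{y>0\}$ with $\py U_{ij} = \partial_{ij}(Vu)$ on $\{y=0\}$. Here is where the $C^{1,\alpha}_x$ hypothesis on $V$ in the $s=1/2$ case is used (and the remark in the introduction already flags this): one needs $\partial_{ij}(Vu)$ to be at least bounded/Hölder, which requires $u \in C^{1,\alpha}_x$ (from the base case) and $V \in C^{1,\alpha}_x$, so $\partial_{ij}(Vu) = (\partial_{ij}V)u + (\partial_i V)(\partial_j u) + (\partial_j V)(\partial_i u) + V \partial_{ij} u$ makes sense with the last term being the only genuinely new object; a bootstrap / fixed-point argument on the $C^{\alpha'}$ norm of $\nabla^2_x U$ closes it. For the final estimate \eqref{ret} when $V \in C^2_{x,t}$, I would work at the level of energy (Caccioppoli) estimates rather than pointwise Schauder: test the equations satisfied by $U_t$, $U_{tt}$, and $U_{ij}$ against suitable cutoff-weighted versions of themselves, integrate by parts using the weighted structure, and absorb the boundary terms on $\{y=0\}$ using the Neumann conditions $\py U_t = \partial_t(Vu)$, $\py U_{tt} = \partial_{tt}(Vu)$, $\py U_{ij} = \partial_{ij}(Vu)$ together with a trace inequality for the weighted space $H^1(y^a)$; each boundary term is controlled by $(1+\|V\|_{C^2})$ times lower-order energies, and a finite iteration of Caccioppoli on a chain of shrinking cylinders $\mathbb B_2 \times (-4,0] \supset \cdots \supset \mathbb B_1 \times (-1,0]$ gives the stated bound with a single constant.

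The main technical obstacle I anticipate is the interaction between the degenerate weight $y^a$ and the boundary terms on the thin set in the energy estimate \eqref{ret}: one must be careful that all the boundary integrals that arise from integrating $\operatorname{div}(y^a \nabla \cdot)$ by parts genuinely converge (i.e. the weighted normal derivative limit exists, which is exactly the content of the base-case regularity $y^a U_y \in C^{\alpha'}$ and its analogues for $y^a \partial_y U_t$, etc.) and that the trace terms are handled with the correct weighted trace/interpolation inequality — losing a power of $y$ in the wrong place would break the $A_2$ structure. A secondary subtlety is ensuring the differentiated equations and Neumann conditions are justified rigorously (difference quotients in $x$ and $t$ rather than honest derivatives, passing to the limit), but this is routine given the translation invariance of $\mathscr{L}_a$ in $(x,t)$. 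I would present the base case and the $\nabla^2_x$ case together via a single bootstrap, then treat \eqref{ret} as a self-contained energy computation, citing \cite{BDGP1, BDGP2, NS, ST} for the underlying weighted parabolic Schauder and De Giorgi--Nash--Moser machinery.
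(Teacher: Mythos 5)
Your plan is essentially the paper's proof: differentiate the extension problem in the tangential and time directions via incremental quotients, reduce each derivative to a degenerate parabolic Neumann problem with bounded (then H\"older) data, invoke boundary regularity theory for the weighted operator $\mathscr{L}_a$ (the paper cites the compactness/Schauder arguments of \cite{AK, BSt} and, for $s=1/2$, classical heat-equation theory from \cite{Li}, while you cite the closely related $A_2$-weighted De Giorgi--Nash--Moser/Schauder machinery), and obtain \eqref{ret} from weighted Caccioppoli estimates. The one real divergence in strategy is that the paper first secures an $L^\infty$ bound on $U_{tt}$ (estimate \eqref{bt}) from the compactness/regularity step and only then feeds it into the energy estimate of \cite[Theorem 5.1]{BG}, which sidesteps much of the delicate boundary-trace bookkeeping you flag as the main obstacle; your pure-energy Caccioppoli route should also close, but it does put more weight on the weighted trace inequalities you identify, whereas the paper's pointwise-then-energy ordering avoids that pressure point.
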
  

\begin{proof}
Note that in the case when $s=1/2$ where the extension operator is the heat operator, it follows from the classical theory as in Chapter 6 in \cite{Li}. Therefore we only focus on the case $s>1/2$. In such a case, by  arguing as in the proof of Lemma 5.5 in \cite{BG} using repeated incremental quotients, we deduce that for each $i=1,..,n$,  $U_i=w$ solves
\begin{equation}\label{kojk}
\begin{cases}
\D(y^a \nabla  w) + y^a w_t=0,
\\
\py w= V w + V_i U= f\in L^{\infty}_{loc}.
\end{cases}
\end{equation}
We can then apply the arguments in \cite{AK} ( see also \cite{BSt}) based on compactness methods to assert that $\nabla w \in H^{\alpha'}$ for some $\alpha'>0$ which implies the desired conclusion.  

Now when $V$ is additionally $C^{2}$, then we can take further  incremental quotients  and finally assert that $w=U_{ij}$ solves
\begin{equation}\label{kojk1}
\begin{cases}
\D(y^a \nabla  w) + y^a w_t=0,
\\
\py w= Vw + V_i U_j + V_j U_i + V_{ij} U.
\end{cases}
\end{equation}
Similarly $w=U_t$ solves
\begin{equation}\label{kojk2}
\begin{cases}
\D(y^a \nabla  w) + y^a w_t=0,
\\
\py w= Vw +  V_t U.
\end{cases}
\end{equation}
Also since $V$ is $C^2$ in time, we can likewise assert  that $w=U_{tt}$ solves
\begin{equation}\label{kojk3}
	\begin{cases}
		\D(y^a \nabla  w) + y^a w_t=0,
		\\
		\py w= Vw + V_t U_t + V_{tt} U,
	\end{cases}
\end{equation}
and moreover 
\begin{equation}\label{bt}
||U_{tt}||_{L^{\infty}(\mathbb B_1 \times (-1, 0])} \leq C(1+||V||_{C^2}) ||y^{a/2} U||_{L^2(\mathbb B_2 \times (-4, 0])}
\end{equation}
Thus from the energy estimate as in the proof of Theorem 5.1 in \cite{BG}, the estimates for $\nabla_x U, \nabla_x^2 U, U_t $ in terms of $||y^{a/2} U||_{L^2(\mathbb B_2 \times (-4, 0])}$ and also by using  \eqref{bt}, we find that \eqref{ret} follows.
\end{proof}

We now state and prove an elementary Rellich type identity that is required  in our analysis. This can be regarded as a slight variant of the one in \cite{CS}. 
\begin{lemma}{(\emph{Rellich type Identity})}\label{Rellich}
Let $F$ be a smooth function with $\text{supp}(F) \subset (\overline{\mathbb B_R}\setminus \{0\}) \times (0,1)$. Then for any $k$ we have 
\begin{align}
	&\int |X|^k\D(y^a\n F)\langle \n F, X\rangle  =-k \int  |X|^{k-2}\langle \n F, X \rangle^2y^a 
	+ \frac{(n+a-1+k)}{2}\int |X|^{k} |\n F|^2 y^a\notag\\& - \int_{\{y=0\}}|x|^k \py F  \langle \n_xF, x \rangle \label{Rellich1}\\
	&\text{and}\notag\\
	&\int |X|^k \D(y^a\n F)F_t=-k\int |X|^{k-2} F_t \langle \n F, X \rangle y^a- \int_{\{y=0\}} |x|^k \py F F_t\label{Rellich2}.
\end{align}
\end{lemma}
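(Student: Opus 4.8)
The two identities are integration-by-parts formulas, so the plan is to integrate by parts in the divergence-form operator $\operatorname{div}(y^a \nabla F)$ against the two multipliers $|X|^k \langle \nabla F, X\rangle$ and $|X|^k F_t$, being careful to track the boundary term that survives on the thin set $\{y=0\}$ because of the degenerate weight $y^a$. The hypothesis $\operatorname{supp}(F)\subset (\overline{\mathbb B_R}\setminus\{0\})\times(0,1)$ is there precisely to kill all other boundary contributions: there is no boundary at the origin (removed), none on the lateral/far part of $\mathbb B_R$ (compact support away from $\partial \mathbb B_R$), and none at $t=0$ or $t=1$ (compact support in time); the factor $y^a$ with $a>-1$ also makes the weight vanish or behave integrably at $y=0$, so only the flux term $\py F = \lim_{y\to 0^+} y^a F_y$ remains.

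\medskip

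\emph{First identity \eqref{Rellich1}.} I would write $\operatorname{div}(y^a\nabla F)\,\langle \nabla F, X\rangle$ and integrate by parts over $\{y>0\}$, moving the divergence off onto $|X|^k\langle\nabla F,X\rangle$. The surface integral over $\{y=0\}$ produces $-\int_{\{y=0\}} |x|^k \py F \langle \nabla_x F, x\rangle$ (the outward normal to $\{y>0\}$ at $y=0$ points in the $-y$ direction, and $\langle \nabla F, X\rangle$ restricted to $y=0$ is $\langle \nabla_x F, x\rangle$). The interior term is $-\int y^a \langle \nabla F, \nabla(|X|^k\langle\nabla F,X\rangle)\rangle$. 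Expanding the gradient, $\nabla(|X|^k\langle \nabla F, X\rangle) = k|X|^{k-2} X \langle\nabla F, X\rangle + |X|^k \nabla(\langle\nabla F, X\rangle)$, and $\langle \nabla F, \nabla\langle\nabla F, X\rangle\rangle = \langle \nabla F, \nabla^2 F\, X\rangle + |\nabla F|^2 = \tfrac12 \langle X, \nabla(|\nabla F|^2)\rangle + |\nabla F|^2$. Substituting gives a term $-k\int |X|^{k-2}\langle\nabla F,X\rangle^2 y^a$ and a term $-\int y^a |X|^k\big(\tfrac12\langle X,\nabla(|\nabla F|^2)\rangle + |\nabla F|^2\big)$. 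For the middle piece I integrate by parts once more: $-\tfrac12\int y^a |X|^k \langle X, \nabla(|\nabla F|^2)\rangle = \tfrac12\int |\nabla F|^2 \operatorname{div}(y^a |X|^k X)$. Now $\operatorname{div}(y^a|X|^k X) = y^a\operatorname{div}(|X|^k X) + |X|^k X\cdot \nabla(y^a)$; since $X=(x,y)$, $\nabla(y^a) = (0,\dots,0,ay^{a-1})$ so $X\cdot\nabla(y^a) = a y^a$, and $\operatorname{div}(|X|^k X) = (n+1)|X|^k + k|X|^k = (n+1+k)|X|^k$ in $n+1$ variables. Hence $\operatorname{div}(y^a|X|^kX) = (n+1+k+a)|X|^k y^a$, and collecting: $\tfrac12(n+1+k+a)\int|X|^k|\nabla F|^2 y^a - \int |X|^k|\nabla F|^2 y^a = \tfrac{n+a-1+k}{2}\int|X|^k|\nabla F|^2 y^a$, which is exactly the stated coefficient. (The second integration by parts again only touches $\{y=0\}$, but there the integrand carries a factor $y^a$, which vanishes in the limit for $a>-1$ since $F$ is smooth; so no extra boundary term.)

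\medskip

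\emph{Second identity \eqref{Rellich2}.} Same scheme with multiplier $|X|^k F_t$: integrate by parts in $X$, picking up $-\int_{\{y=0\}}|x|^k \py F\, F_t$ and the interior term $-\int y^a\langle\nabla F, \nabla(|X|^k F_t)\rangle = -k\int |X|^{k-2} y^a F_t \langle\nabla F, X\rangle - \int y^a |X|^k \langle\nabla F, \nabla F_t\rangle$. The last integral is $-\tfrac12\int y^a|X|^k \partial_t(|\nabla F|^2)$, which integrates to zero in $t$ because $F$ (hence $|\nabla F|^2$) is compactly supported in the open interval $(0,1)$. This gives \eqref{Rellich2}.

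\medskip

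I do not expect a serious obstacle here; the only point requiring a little care is justifying that the boundary terms other than the $\{y=0\}$ flux vanish, i.e. that the integrations by parts are legitimate despite the degeneracy of $y^a$ at $y=0$ and the singularity of $|X|^k$ at the origin. Both are handled by the support assumption (origin excised, compact support in $\mathbb B_R\times(0,1)$) together with $a>-1$, so that $y^a$ is locally integrable and $y^a F_y \to \py F$ is the only surviving trace; one can make this rigorous by working on $\{y>\delta\}$ and letting $\delta\to 0^+$, using the definition \eqref{nder} of $\py F$.
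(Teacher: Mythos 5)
Your proof is correct and follows essentially the same route as the paper: integrate by parts once to produce the $\{y=0\}$ flux and the interior term, expand $\nabla(|X|^k\langle\nabla F,X\rangle)$, replace $\langle\nabla F,\nabla^2 F\,X\rangle$ by $\tfrac12\langle X,\nabla(|\nabla F|^2)\rangle$, and integrate by parts once more against $y^a|X|^k X$ (with $\operatorname{div}(y^a|X|^kX)=(n+1+k+a)|X|^ky^a$), and for the second identity use the fundamental theorem of calculus in $t$ to kill $\tfrac12\int\partial_t(y^a|X|^k|\nabla F|^2)$. The computations and the treatment of boundary terms match the paper's argument.
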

\begin{proof}  For notational convenience, we let $r=|X|$ throughout the proof. Since  $\text{supp}(F) \subset \overline{\mathbb B_R}\setminus \{0\} \times (0,1),$ we will only have the boundary term at $\{y=0\}.$ Now by integrating by parts, we have
\begin{align*}
	&\;\;\;\;\;\int \D (y^a \n F) r^k \langle \n F, X \rangle\\
	&= - \int  \langle \n F, \n (r^k \langle \n F, X \rangle\rangle y^a -\int_{\{y=0\}} r^k\py F\langle \n_x F, x \rangle \\
	&=- \int \langle \n F, X \rangle \langle\n F, \n r^k\rangle  y^a  - \int r^k \langle \n F, \n \langle \n F, X \rangle \rangle y^a - \int_{\{y=0\}} r^k\py F \langle \n_x F, x \rangle\\
	&=-k \int \langle \n F, X \rangle^2 r^{k-2}y^a  - \int r^{k} |\n F|^2 y^a  - \int r^k \langle\n F,  \nabla^2 F  X\rangle  y^a -\int_{\{y=0\}} r^k\py F\langle \n_x F, x \rangle \\
	&=-k \int \langle \n F, X \rangle^2 r^{k-2}y^a  - \int r^{k} |\n F|^2 y^a  - \frac{1}{2}\int r^k\langle X, \n (|\n F|^2)\rangle y^a  -\int_{\{y=0\}} r^k\py F\langle \n_x F, x\rangle.
\end{align*}
By applying integration by parts to the  term 
\[
\frac{1}{2}\int r^k\langle X, \n (|\n F|^2)\rangle y^a
\]
and by using $<X, \n r^k>= kr^k$,   $\underset{y \rightarrow 0}{\text{lim}}y^{1+a}r^k|\n F|^2=0$, we get
\begin{align*}
&\int \D (y^a \n F) r^k \langle \n F, X \rangle\\	&= -k \int \langle \n F, X \rangle^2 r^{k-2}y^a  - \int r^{k} |\n F|^2 y^a  + \frac{k}{2}\int r^k (|\n F|^2) y^a \\
	&\;\;\;+ \frac{n+a+1}{2}\int r^k |\n F|^2 y^a -\int_{\{y=0\}} r^k\py F\langle \n_x F, x \rangle\\
	&=-k \int \langle \n F, X \rangle^2 r^{k-2}y^a  
	+ \frac{(n+a-1+k)}{2}\int r^{k} |\n F|^2 y^a  -\int_{\{y=0\}} r^k\py F\langle \n_x F, x \rangle.
\end{align*}
This completes the proof of \eqref{Rellich1}.\\
Now for the proof of \eqref{Rellich2}, we again apply integration by parts to get
\begin{align*}
	\int r^k \D(y^a \n F)F_t&=-\int \langle\n (r^k F_t), \n F\rangle y^a -\int_{\{y=0\}} r^k \py  FF_t\\
	&=-k\int r^{k-2} F_t \langle \n F, X \rangle y^a - \int r^k \langle\n F_t, \n F\rangle y^a -\int_{\{y=0\}} r^k \py FF_t\\
	&= -k\int r^{k-2} F_t \langle \n F, X \rangle y^a - \frac{1}{2}\int  \partial_t(r^k|\n F|^2y^a)-\int_{\{y=0\}} r^k \py FF_t.
\end{align*}
Now using  the fundamental theorem of calculus in the $t$-variable, we deduce that  the second integral on the right hand side in the expression above is zero which consequently  finishes the proof of \eqref{Rellich2}. 
\end{proof}
We need the following interpolation inequality where we use some ideas from \cite{RS}. 
\begin{lemma}\label{interpolation}
	Let $s \in (0,1)$ and $f \in C^2_0(\R^n \times \R_+).$ Then there exists a universal constant $C$ such that for any $0<\eta <1$ the following holds
	\begin{align}\label{inte}
		||\nabla_x f||_{L^{2}(\R^n)} \leq C \eta^s\left( || y^{a/2} \nabla \nabla_x f||_{L^2(\R^n \times \R_+)} +|| y^{a/2} \nabla_x f||_{L^2(\R^n \times \R_+)}\right) + C\eta^{-1} ||f||_{L^2(\R^n)}.
	\end{align}
In particular when $n=1$,  we get
\begin{align}\label{inte1}
	|| f_t||_{L^{2}(\R)} \leq C \eta^s\left( || y^{a/2} \partial_y f_t||_{L^2(\R \times \R_+)}  +||y^{a/2}f_{tt}||_{L^2(\R \times \R_+)}  +||y^{a/2}f_{t}||_{L^2(\R \times \R_+)}\right) + C\eta^{-1} ||f||_{L^2(\R)}.
\end{align}
\end{lemma}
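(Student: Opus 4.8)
The plan is to establish, for $0<\eta<1$, the slightly stronger estimate
\[
\|\nabla_x f(\cdot,0)\|_{L^2(\R^n)}^2 \;\le\; C\,\eta^{2s}\,\|y^{a/2}\,\nabla\nabla_x f\|_{L^2(\R^n\times\R_+)}^2 \;+\; C\,\eta^{-2}\,\|f(\cdot,0)\|_{L^2(\R^n)}^2,
\]
where $a=1-2s$ and $f(\cdot,0)$, $\nabla_x f(\cdot,0)$ denote the traces on the thin set $\{y=0\}$; taking square roots and retaining the nonnegative term $C\eta^s\|y^{a/2}\nabla_x f\|_{L^2}$ on the right then gives \eqref{inte}, while \eqref{inte1} is simply \eqref{inte} in dimension $n=1$ with the tangential variable relabelled $t$ (so $\nabla_x f\mapsto f_t$ and $\nabla\nabla_x f\mapsto(f_{tt},\partial_y f_t)$). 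The argument uses the Fourier transform in the tangential variable $x$ and splits frequency space at $|\xi|=1/\eta$: for low frequencies a tangential derivative costs only a factor $|\xi|\le 1/\eta$, while for high frequencies one trades the surplus power of $|\xi|$ against the weighted Dirichlet energy by means of a one-dimensional trace inequality in the $y$-variable.

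The one substantive ingredient is the following slice-wise estimate: for every $v\in C^1_0([0,\infty))$ (scalar- or vector-valued) and every $\lambda>0$ one has
\[
|v(0)|^2 \;\le\; C\,\lambda^{a-1}\int_0^\infty |v'(y)|^2\,y^a\,dy \;+\; C\,\lambda^{a+1}\int_0^\infty |v(y)|^2\,y^a\,dy,
\]
with $C=C(a)$. This I would prove by writing $v(0)=-\int_0^\infty \partial_y\big(v(y)\,\psi(\lambda y)\big)\,dy$ for a fixed $\psi\in C^\infty_0([0,2))$ with $\psi(0)=1$, so that $|v(0)|\le\int_0^{2/\lambda}\big(|v'|+C\lambda\,|v|\big)\,dy$, and then applying the Cauchy--Schwarz inequality with the weight $y^a$ together with the elementary identity $\int_0^{2/\lambda}y^{-a}\,dy=c_a\,\lambda^{a-1}$, which is finite precisely because $a<1$. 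Applying this with $v(y)=\widehat{\nabla_x f}(\xi,y)$ and $\lambda=|\xi|$, and using $a-1=-2s$, $a+1=2-2s$, $|\xi|^{a+1}=|\xi|^{a-1}|\xi|^2$ and $|\xi|^2|\widehat{\nabla_x f}(\xi,y)|^2\simeq|\widehat{\nabla_x^2 f}(\xi,y)|^2$, one obtains for every $\xi$
\[
|\widehat{\nabla_x f}(\xi,0)|^2 \;\le\; C\,|\xi|^{-2s}\int_0^\infty\!\Big(\,|\widehat{\partial_y\nabla_x f}(\xi,y)|^2+|\widehat{\nabla_x^2 f}(\xi,y)|^2\,\Big)\,y^a\,dy.
\]

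Next I would write $\|\nabla_x f(\cdot,0)\|_{L^2(\R^n)}^2=\int_{\R^n}|\widehat{\nabla_x f}(\xi,0)|^2\,d\xi$ and split at $|\xi|=1/\eta$. On $\{|\xi|\le 1/\eta\}$ one uses $|\widehat{\nabla_x f}(\xi,0)|=2\pi|\xi|\,|\hat f(\xi,0)|\le 2\pi\eta^{-1}|\hat f(\xi,0)|$ and Plancherel to bound this part by $C\eta^{-2}\|f(\cdot,0)\|_{L^2}^2$. On $\{|\xi|>1/\eta\}$ one inserts the slice-wise bound, uses $|\xi|^{-2s}\le\eta^{2s}$, integrates in $\xi$, and applies Plancherel in $x$ for each fixed $y$ (the weight $y^a$ involves only $y$, hence commutes with $\mathscr F_{x\to\xi}$) to bound this part by $C\eta^{2s}\big(\|y^{a/2}\partial_y\nabla_x f\|_{L^2}^2+\|y^{a/2}\nabla_x^2 f\|_{L^2}^2\big)\le C\eta^{2s}\|y^{a/2}\nabla\nabla_x f\|_{L^2}^2$. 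Adding the two contributions yields the displayed estimate and hence \eqref{inte} and \eqref{inte1}.

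The only delicate point — and the closest thing to an obstacle — is the slice-wise weighted trace inequality: the cutoff $\psi(\lambda\,\cdot)$ must be localized at exactly scale $1/\lambda$ so that $\int_0^{2/\lambda}y^{-a}\,dy$ is finite (this is where $a<1$, i.e.\ $s>0$, enters) and produces precisely the exponents $\lambda^{a-1}$ and $\lambda^{a+1}$, which after multiplication by $|\xi|^{2s}$ reduce to $|\xi|^{-2s}$ and a bounded power times $|\xi|^2$. The remaining steps are routine: since $f\in C^2_0(\R^n\times\R_+)$ each slice $\widehat{\nabla_x f}(\xi,\cdot)$ is compactly supported and $C^1$ in $y$, and since $a\in(-1,1)$ all the weighted integrals above are finite, so the integration by parts, the Cauchy--Schwarz step, the frequency splitting, and the two applications of Plancherel are all legitimate.
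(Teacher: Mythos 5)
Your proof is correct, but it takes a genuinely different route from the paper's. The paper applies Plancherel, writes $|\widehat{\nabla_x f}|^2 = \big(\langle\xi\rangle^{2s}|\widehat{\nabla_x f}|^2\big)^{1-s}\big(\langle\xi\rangle^{-2(1-s)}|\widehat{\nabla_x f}|^2\big)^s$, applies Young's inequality twice (once to split off the high-regularity factor, once again on the low-regularity remainder after $|\widehat{\nabla_x f}|\le\langle\xi\rangle|\hat f|$), and then invokes the trace inequality of R\"uland--Salo (Lemma 4.4 of \cite{RS}) as a black box to convert $\int\langle\xi\rangle^{2s}|\widehat{\nabla_x f}|^2$ into the weighted bulk norms. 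You instead prove the needed trace control from scratch via a one-dimensional slice estimate $|v(0)|^2\le C\lambda^{a-1}\int_0^\infty|v'|^2y^a\,dy + C\lambda^{a+1}\int_0^\infty|v|^2y^a\,dy$ obtained by integrating $\partial_y\big(v\,\psi(\lambda\,\cdot)\big)$ against a cutoff localized at scale $1/\lambda$, and then replace Young's inequality by a hard frequency split at $|\xi|=1/\eta$. The two arguments are not reparametrizations of each other: the paper's is a pointwise convexity (Young) decomposition in Fourier space, while yours is a spatial (frequency-support) decomposition. Your version is self-contained (it does not quote \cite{RS}), the role of $s\in(0,1)$ is transparent (the exponent $a-1=-2s$ and the integrability of $y^{-a}$ near $0$ both come from the cutoff scale $1/\lambda$), and you in fact obtain a slightly stronger inequality in which the term $C\eta^s\|y^{a/2}\nabla_x f\|_{L^2}$ on the right-hand side of \eqref{inte} is not needed; the paper's route is shorter if the \cite{RS} trace inequality is already at hand, which is the reason it is chosen there.
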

\begin{proof}In the proof, we use the notation $\langle \xi \rangle:= \sqrt{1+|\xi|^2}.$ From Plancherel theorem, we have 
	\begin{align}\label{pc1}
		\int_{\R^n} |\n_x f|^2=\int_{\R^n} |\widehat{\n_x f}|^2.
	\end{align}
Now we write $|\widehat{\n_x f}|^2=\left(\langle \xi \rangle^{2s}|\widehat{\n_x f}|^2\right)^{1-s}\left(\langle \xi \rangle^{-2(1-s)}|\widehat{\n_x f}|^2\right)^s$, then \eqref{pc1} becomes 
\begin{align}\label{pc2}
	\int_{\R^n} |\n_x f|^2=\int_{\R^n}\left(\langle \xi \rangle^{2s}|\widehat{\n_x f}|^2\right)^{1-s}\left(\langle \xi \rangle^{-2(1-s)}|\widehat{\n_x f}|^2\right)^s.
\end{align}
 We now apply Young's inequality \begin{equation}\label{young}AB \le \frac{\mu^p A^p}{p} + \frac{B^q}{\mu^{q}q},\end{equation}
with $A=\left(\langle \xi \rangle^{2s}|\widehat{\n_x f}|^2\right)^{1-s},$ $B=\left(\langle \xi \rangle^{-2(1-s)}|\widehat{\n_x f}|^2\right)^s,$ $p=1/(1-s)$ and $q=1/s$ in the right hand side of \eqref{pc2} to get 
\begin{align}\label{pp2}
	\int_{\R^n} |\n_x f|^2 \le (1-s)\mu^{\frac{1}{1-s}}\int_{\R^n}\langle \xi \rangle^{2s}|\widehat{\n_xf}|^2 + s \mu^{-1/s}\int_{\R^n}\langle \xi \rangle^{-2(1-s)}|\widehat{\n_xf}|^2.
\end{align}
Now we estimate the second term in the right hand side of \eqref{pp2}. We first use $|\widehat{\n_xf}|^2=|\xi|^2|\hat{f}|^2 \le \langle \xi \rangle ^2|\hat{f}|^2$ to get 
\begin{align}\label{sec2}
	 s \mu^{-1/s}\int_{\R^n}\langle \xi \rangle^{-2(1-s)}|\widehat{\n_xf}|^2= s \mu^{-1/s}\int_{\R^n}\left( \langle \xi \rangle^{2s}|\hat{f}|^{2s}\right)\left(|\hat{f}|^{2-2s}\right).
\end{align}
Then we again apply the Young's inequality \eqref{young} with $\mu=\epsilon,$ $A=(\langle \xi \rangle|\hat{f}|)^{2s},$ $B=|\hat{f}|^{2-2s},$ $p=1/s,$ and $q=1/(1-s)$  to obtain
\begin{align}\label{es11}
	s \mu^{-1/s}\int_{\R^n}\langle \xi \rangle^{-2(1-s)}|\widehat{\n_x f}|^2 \le s^2\mu^{-1/s}\epsilon^{1/s}\int_{\R^n}\langle \xi \rangle^2|\hat{f}|^2 +s(1-s)\mu^{-1/s}\epsilon^{-\frac{1}{1-s}}\int_{\R^n}|\hat{f}|^2.
\end{align}
Since $|\widehat{\n_x f}| = |\xi||\hat{f}|,$ therefore $\langle \xi \rangle^2|\hat{f}|^2= |\hat{f}|^2+|\widehat{\n_xf}|^2.$
We now choose $\epsilon$ such that 
\begin{align}\label{epc}
	s^2\mu^{-1/s}\epsilon^{1/s}=\frac{1}{2} \implies \epsilon =\frac{\mu}{(2s^2)^s}.
	\end{align}
By substituting  this value of  $\epsilon$ in  \eqref{es11}, we get
\begin{align}\label{es22}
	s \mu^{-1/s}\int_{\R^n}\langle \xi \rangle^{-2(1-s)}|\widehat{\n_xf}|^2 \le \frac{1}{2}\int_{\R^n}|\widehat{\n_xf}|^2 +\frac{1}{2}\int_{\R^n}|\hat{f}|^2 +s(1-s)(2s^2)^{s/(1-s)}\mu^{-1/s}\mu^{-\frac{1}{1-s}}\int_{\R^n}|\hat{f}|^2.
\end{align} 
Then by using \eqref{es22} in \eqref{pp2}  we find
\begin{align}\label{pul}
	\frac{1}{2} \int_{\R^n} |\widehat{\n_xf}|^2 \le (1-s)\mu^{\frac{1}{1-s}}\int_{\R^n}\langle \xi \rangle^{2s}|\widehat{\n_xf}|^2 +\frac{1}{2}\int_{\R^n}|\hat{f}|^2 + (1-s)2^{\frac{s}{1-s}}s^{\frac{1+s}{1-s}}\mu^{\frac{-1}{s(1-s)}}\int_{\R^n}|\hat{f}|^2.
	\end{align}
 Now using the trace inequality  as in Lemma 4.4 in \cite{RS}, we can estimate$\int_{\R^n}\langle \xi \rangle^{2s}|\widehat{\n_xf}|^2$ as follows
 \begin{equation}\label{try1}\int_{\R^n}\langle \xi \rangle^{2s}|\widehat{\n_xf}|^2 \le C\int_{\R^n \times \R_+}|\n\n_x f|^2y^a+C\int_{\R^n \times \R_+}|\n_x f|^2y^a.\end{equation} The conclusion follows  by employing  the estimate \eqref{try1} in \eqref{pul} and  subsequently  by letting $\mu^{1/(1-s)}=\eta^s.$ 
	\end{proof}
\section{Proof of the main result}\label{s:m}
\subsection{Carleman Estimate I}
We first state and prove our main Carleman estimate using which we prove an upper bound on the vanishing order in the bulk.  This is a generalization of the estimate  in \cite[Theorem 2]{EV}. However, the Carleman weight  that we use is similar to that in \cite[Theorem 2.1]{Bk}. The proof of this estimate is also partly inspired by that of Theorem 1.1 in \cite{BGM}.  
\begin{lemma}\label{carlmanbulk}
Let $\W$ be compactly supported in  $(\overline{\mathbb B_R} \setminus \{0\} )\times (0,1))$ and assume that it solves  
\begin{equation}\label{kref}
\begin{cases}
\D(y^a \n \W) + y^a \W_t =g  \;\;\text{in $\R^{n+1}\times \R_y^+$}
\\
  \py \W= V\W\ \text{on $\{y=0\}$},
  \end{cases}
  \end{equation}
   where $V \in C^{1}(\R^n \times \R)$.
Then there exist universal constants $C=C(n,s)$ and $R_0$ such that for all $r <R_0$ and any  $\A >C(n,s)(1+||V||_{C^{1}}^{1/2s})$, the following estimate holds
\begin{align}\label{carleman}
	\alpha^3\int  |X|^{-2\A -4 + \epsilon} e^{2\A |X|^\epsilon} \W^2 y^a \le C \int |X|^{-2 \A}e^{2\A |X|^\epsilon }  g^2y^{-a}.
\end{align}
\end{lemma}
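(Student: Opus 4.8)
The plan is to prove the Carleman estimate \eqref{carleman} by conjugation: introduce the substitution that transforms the degenerate-parabolic operator $\mathscr{L}_a$ into a symmetric-plus-antisymmetric form in suitable polar-type coordinates, and then establish a positivity estimate for the conjugated operator. Concretely, I would set $r = |X|$, pass to logarithmic radial coordinate (or the coordinate $\sigma$ adapted to the weight $|X|^\epsilon$ as in \cite{Bk}), and write $v = |X|^{-\A} e^{\A |X|^\epsilon} \W$ so that the weighted $L^2$ estimate \eqref{carleman} becomes an $L^2$-bound $\A^3 \int |X|^{-4+\epsilon} v^2 y^a \lesssim \int |X|^{\cdots} (\mathscr{L}_\phi v)^2 y^{-a}$ where $\mathscr{L}_\phi$ is the conjugated operator $e^{\phi}\mathscr{L}_a e^{-\phi}$ with $\phi = \A(-\log|X| + |X|^\epsilon)$ (up to the parabolic term). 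The standard route is to split $\mathscr{L}_\phi = \mathcal{S} + \mathcal{A}$ into (formally) symmetric and antisymmetric parts with respect to the measure $y^a\, dX\, dt$, expand $\int (\mathcal{S} v + \mathcal{A} v)^2 = \int (\mathcal{S}v)^2 + \int(\mathcal{A}v)^2 + \int [\mathcal{S},\mathcal{A}] v \cdot v$, discard the squares, and show the commutator term $[\mathcal{S},\mathcal{A}]$ is bounded below by $c\A^3$ times the desired weight, modulo lower-order terms absorbable for $\A$ large.

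The key steps in order: (i) compute the conjugated operator $\mathscr{L}_\phi$ explicitly, identifying the radial first-order term whose coefficient is $\sim \A \epsilon |X|^{\epsilon}$ (this is what makes the weight $|X|^{-2\A-4+\epsilon}e^{2\A|X|^\epsilon}$ appear, with the extra factor $|X|^\epsilon$ over the naive $|X|^{-2\A-4}$); (ii) separate symmetric/antisymmetric parts, keeping careful track of the weight $y^a$ and the parabolic term $y^a\W_t$, which contributes an antisymmetric piece whose commutator with the radial symmetric piece is handled using a Rellich-type identity — this is where Lemma \ref{Rellich} enters, applied with $k = -2\A - \cdots$; (iii) integrate the commutator by parts to extract the leading positive term $\sim \A^3 \int |X|^{-2\A-4+\epsilon}e^{2\A|X|^\epsilon}\W^2 y^a$ plus error terms of order $\A^2$ and lower, together with a boundary contribution on $\{y=0\}$; (iv) handle the boundary term $\int_{\{y=0\}} |x|^{k}\py\W\langle\n_x\W,x\rangle$ using $\py\W = V\W$, a trace/interpolation inequality (in the spirit of Lemma \ref{interpolation}) to control $\int_{\{y=0\}}|x|^k V \W \langle\n_x\W,x\rangle$ by $\|V\|_{C^1}$ times bulk quantities with small constant; (v) choose $\A > C(n,s)(1 + \|V\|_{C^1}^{1/2s})$ so that the $\A^3$ gain dominates all errors, in particular the potential-dependent boundary terms where the exponent $1/2s$ arises from balancing the trace inequality against the weight scaling.

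The main obstacle I expect is step (iv) combined with the scaling bookkeeping in step (v): the boundary term coming from the Neumann condition $\py\W = V\W$ must be absorbed, and doing so requires a trace inequality that loses a power of $y^a$, which after optimizing the free parameter forces the threshold $\A \gtrsim \|V\|_{C^1}^{1/2s}$ rather than $\|V\|_{C^1}^{1/2}$; getting the exponent $1/2s$ exactly right (and not worse) is delicate and is precisely the place where $s$ enters quantitatively. A secondary technical difficulty is that $\mathscr{L}_a$ is degenerate at $y=0$ and singular in the weight, so all integrations by parts in $y$ must be justified by the vanishing $\lim_{y\to 0}y^{1+a}r^k|\n\W|^2 = 0$ (as already used in Lemma \ref{Rellich}) and the compact support of $\W$ away from $\{0\}\times(0,1)$; the parabolic term requires the fundamental theorem of calculus in $t$ to kill the time-boundary contributions, exactly as in the proof of \eqref{Rellich2}. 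Once the commutator positivity and the boundary absorption are in place, the conclusion \eqref{carleman} follows by undoing the conjugation and using $r^\epsilon \le R_0^\epsilon$ small to absorb the subleading terms.
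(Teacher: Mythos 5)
Your proposal captures the same high-level strategy as the paper's proof: conjugate with the Carleman weight $r^{-\beta}e^{\alpha r^\epsilon}$, expand the conjugated operator, isolate the dominant $\alpha^3$ positivity via integration by parts and the Rellich identities of Lemma \ref{Rellich}, and control the Neumann boundary contribution from $\py\W = V\W$ by the trace inequality from \cite{Ru}, optimizing the trace parameter $\tau \sim \|V\|_{C^1}^{1/2s}$. The choice of $\epsilon = (1-a)/4$, the role of $R_0$ small to absorb $O(r^\epsilon)$ error terms, and the origin of the $1/2s$ exponent are all correctly identified. That said, there is one place where your plan as stated would not go through literally.

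The step ``discard the squares'' is the problematic one. The paper does not do a clean $\|\mathcal{S}v\|^2 + \|\mathcal{A}v\|^2 + \langle[\mathcal{S},\mathcal{A}]v,v\rangle$ decomposition and then keep only the commutator. It uses the weaker inequality $(A+B)^2 \geq A^2 + 2AB$, where $A$ is the first-order radial part plus $r^\beta W_t$, and \emph{retains} $I_1 = \int A^2$. The reason is the parabolic cross term $I_9 = 2\int r^{2\beta-2\alpha}\D(y^a\nabla W)W_t$. After applying the Rellich identity \eqref{Rellich2}, this produces $(4\alpha - 4\beta)\int r^{2\beta-2\alpha-2}W_t\langle\nabla W, X\rangle y^a$, which — unlike $\int WW_t$ — does \emph{not} vanish by the fundamental theorem of calculus because of the nonconstant radial weight $r^{2\beta-2\alpha-2}$. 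This mixed term has no sign and cannot be absorbed by $\alpha^3\int W^2$. The paper handles it with an explicit algebraic identity combining $I_1 + I_9$, which after expansion produces a negative $4(\beta^2-\alpha^2)\int r^{-n-a-1}\langle\nabla W, X\rangle^2 y^a$ that is then absorbed by the positive $4\beta(n+a-1)\int r^{-n-a-1}\langle\nabla W, X\rangle^2 y^a$ coming from $I_5$; the paper explicitly flags this as ``a crucial and subtle observation.'' If you literally discard $\|\mathcal{A}v\|^2$, you lose the only tool available to neutralize this cross term. (If instead you insist on a genuinely anti-self-adjoint $\mathcal{A}$ with respect to the correct weighted inner product, the $W_t\langle\nabla W, X\rangle$ contributions should cancel internally — but verifying that cancellation amounts to doing the paper's bookkeeping anyway.) Apart from this, the proposal is the same proof; be prepared to keep $\|\mathcal{A}v\|^2$ and track the interaction between the $\langle\nabla W, X\rangle^2$ terms coming from $I_1$, $I_5$, and $I_9$.
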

\begin{proof}Again for notational convenience we let $r=|X|$ and also  denote $||V||_{C^1_{x,t}}$ by $||V||_1$.  We then  set $W=r^{-\B}e^{\A r^\epsilon }\W$ where $\epsilon=(1-a)/4$ and  $\B$ will be chosen depending on $\A$ later. Since $\W=r^{\B}e^{-\A r^\epsilon }W$, from direct calculations we have
\begin{align*}  
	g=\D(y^a \n \W) + y^a \W_t=&(- \A \epsilon (2\B+\epsilon + n+a-1) r^{\B + \epsilon -2}+ \B (\B+n+a-1)r^{\B-2} + \A ^2 \epsilon^2 r^{\B +2 \epsilon -2})e^{-\A r^\epsilon }W y^a\\
	& + 2 ( \B r^{\B-2} - \A \epsilon r^{\B + \epsilon -2}) e^{-\A r^\epsilon } \langle\n W, X\rangle y^a +r^{\B}e^{-\A r^\epsilon } (\D(y^a \n W) + y^a W_t).
\end{align*}
Now using the inequality $(A+B)^2 \geq A^2 + 2 AB $ with
\begin{align*}
	A&=2 ( \B r^{\B-2} - \A \epsilon r^{\B + \epsilon -2}) e^{-\A r^\epsilon } \langle\n W, X\rangle y^a + r^{\B}e^{-\A r^{\epsilon}}W_ty^a
\end{align*}
and with $B$ being the rest of the terms in the expression for $g$ above, we get
\begin{align}\label{ineq}
	&\int r^{-2 \A}e^{2\A r^\epsilon }g^2 y^{-a}\ge \int r^{-2 \A} (2(\B r^{\B-2} - \A \epsilon r^{\B + \epsilon -2}) \langle \n W,X\rangle +r^{\B}W_t)^2y^a \\
	&-4 \A \epsilon (2\B+\epsilon + n+a-1) \int  r^{2\B-2 \alpha + \epsilon -2}(\B r^{-2}- \alpha \epsilon r^{\epsilon-2})W\langle \n W,X\rangle y^a\notag\\
	&+4\B (\B+n+a-1)\int r^{2\B-2\A-2}(\B r^{-2}- \alpha \epsilon r^{\epsilon-2})W\langle \n W, X \rangle  y^a\notag\\
	& +4\A ^2 \epsilon^2 \int  r^{2\B- 2\A +2 \epsilon -2}(\B r^{-2}- \alpha \epsilon r^{\epsilon-2})W \langle \n W,X \rangle y^a\notag\\
	&+4\int r^{2\B-2\A}(\B r^{-2}- \alpha \epsilon r^{\epsilon-2})\langle \n W, X \rangle  \D(y^a \n W)\notag\\
	&-2 \A \epsilon (2\B+n +\epsilon +a-1)  \int  r^{2\B-2 \alpha + \epsilon -2}WW_t y^a\notag\\
	&+2\B (\B+n+a-1)\int r^{2\B-2\A-2}WW_t y^a\notag\\
	& +2\A ^2 \epsilon^2 \int  r^{2\B- 2\A +2 \epsilon -2}WW_t y^a
	+2\int r^{2\B-2\A}  \D(y^a \n W)W_t\notag\\
	&=I_1+I_2+I_3+I_4+I_5+I_6+I_7+I_8+I_9.\notag
\end{align}
We now simplify each of the  integrals separately. To handle $I_2,I_3$ and $I_4,$ we  observe by  an application of  the divergence theorem that the following holds
\begin{align}\label{vrvterm}
	\int  r^{2\B-2\A+\epsilon-4}  \langle \n W, X\rangle  W y^a 
	&= \frac{1}{2}\int  r^{2\B-2\A+\epsilon-4} \langle \n W^2, X \rangle y^a \\
	&= -\frac{1}{2}\int \D (y^a r^{2\B-2\A+\epsilon-4} X) W^2 dX \notag\\
	&= -\frac{(2\B-2\A+\epsilon-4+n+a+1)}{2} \int r^{2\B-2\A+\epsilon-2}   W^2 y^a\notag.
\end{align}
 Also,  by analogous computation as in \eqref{vrvterm} above, we note that in order to equate the following  integral  to zero in \eqref{ineq} above, i.e. \[4\B^2(\B+n+a+1)\int r^{2\B-2\A-4}W\langle \n W, X\rangle,\] we require  $\B$ be related $\A$ in the following way
\begin{align}\label{beta}
	2\B-2\A-4+n+a+1=0.
\end{align}
Moreover by  similar calculations as in \eqref{vrvterm} and by substituting the value of $\B$  in \eqref{beta},  we deduce that there exists  a constant  $C=C(n,a)$ such that for all $\A >C$, we  have that $I_2+I_3+I_4$ can be lower bounded as follows
\begin{align*}
I_2+I_3+I_4\geq	6\A^3\epsilon^2\int   r^{-n-a-1 + \epsilon} W^2 y^a 
	-C\alpha^3\int r^{-n-a-1 +2 \epsilon}W^2 y^a.
\end{align*}
Now if we choose $R_0$ such that $CR_0^{\epsilon} <\epsilon^2$, then  we get 
\begin{equation}\label{iut}I_2+I_3+I_4\geq 5\alpha^3\epsilon^2\int   r^{-n-a-1+ \epsilon} W^2 y^a.\end{equation}

Now we note that from \eqref{kref} it follows that  $\py W=VW$. Using this along with \eqref{Rellich1},  we obtain
\begin{align}\label{i50}
	I_5&=4\int r^{2\B-2\A}(\B r^{-2}- \alpha \epsilon r^{\epsilon-2})\langle \n W, X \rangle  \D(y^a \n W)=4\int r^{-n-a+3}(\B r^{-2}- \alpha \epsilon r^{\epsilon-2})\langle \n W, X \rangle  \D(y^a \n W)\\
	&=4\B(n+a-1) \int r^{-n-a-1}   \langle \n W,X\rangle^2 y^a   -4\A\epsilon(n+a-1-\epsilon) \int r^{-n-a-1+\epsilon}\langle \n W,X\rangle^2 y^a\notag \\
	&\;\;\; -2\A\epsilon^2 \int r^{-n-a+1+\epsilon} |\n W|^2 y^a +4\B \int_{\{y=0\}} r^{-n-a+1}   VW \langle \n_x W,x\rangle - 4\A\epsilon \int_{\{y=0\}}r^{-n-a+1+\epsilon}VW  \langle \n_x W,x\rangle.\notag
\end{align}
We now estimate   the integral
$$\int r^{-n-a+1+\epsilon}|\n W|^2 y^a dX.$$ 
Recall $W= r^{-\beta}e^{\alpha r^{\epsilon}}\W,$
therefore we have $\n W = r^{-\beta}e^{\alpha r^{\epsilon}}\n \W - \beta r^{-\beta-2}e^{\alpha r^{\epsilon}}\W X + \alpha \epsilon r^{-\beta +\epsilon -2} e^{\alpha r^{\epsilon}}\W X.$
Thus 
\begin{align}\label{1}
	&|\n W|^2 = \langle \n W, \n W \rangle\\
	&= \langle r^{-\beta}e^{\alpha r^{\epsilon}}\n \W - \beta r^{-\beta-2}e^{\alpha r^{\epsilon}}\W X + \alpha \epsilon r^{-\beta +\epsilon -2} \W X,r^{-\beta}e^{\alpha r^{\epsilon}}\n \W - \beta r^{-\beta-2}e^{\alpha r^{\epsilon}}\W X + \alpha \epsilon r^{-\beta +\epsilon -2} \W X \rangle \notag\\
	&=  r^{-2\beta}e^{2\alpha r^{\epsilon}}(|\n \W|^2 - 2 \beta r^{-2} \langle \n \W, X \rangle \W + 2 \alpha \epsilon r^{\epsilon-2} \langle \n \W, X \rangle \W +  \beta^2 r^{-2}\W^2 -2 \alpha \beta \epsilon r^{\epsilon -2} \W^2 + \alpha^2 \epsilon^2 r^{2\epsilon -2}\W^2 ).\notag
\end{align}
Now we estimate this term by term. Note that from \eqref{beta}, $-2\B -n-a+1+\epsilon=-2\A-2+\epsilon$ and using divergence theorem, we get
\begin{align}\label{ty0}
	&\int r^{-2\A-2+\epsilon}e^{2\alpha r^{\epsilon}}|\n \W|^2 y^a  \\
	&=-\int   \D(r^{-2\A-2+\epsilon}y^a \n \W)\W - \int_{\{y=0\}}e^{2 \alpha r^{\epsilon}}r^{-2\A-2+\epsilon}W \W^2\notag\\
	&= -\int  r^{-2\A-2+\epsilon}e^{2\alpha r^{\epsilon}}  \D(y^a \n \W)\W - \int \langle\n \W, \n (r^{-2\A-2+\epsilon}e^{2\alpha r^{\epsilon}})\rangle \W y^a\label{urterm}\notag\\
	&\;\;\;-\int_{\{y=0\}}e^{2 \alpha r^{\epsilon}}r^{-2\A-2+\epsilon}V \W^2\notag.
\end{align}
Also  from the  fundamental theorem of calculus in the $t$-variable we have
\begin{equation}\label{ty1}
\int r^{-2\A-2+\epsilon}e^{2\alpha r^{\epsilon}} \W \W_ty^a = \int r^{-2\A-2 +\epsilon}e^{2\alpha r^{\epsilon}} (\W^2)_ty^a=0.\end{equation}
From \eqref{ty0} and \eqref{ty1} it thus follows
\begin{align*}
	\int r^{-2\A-2+\epsilon}e^{2\alpha r^{\epsilon}}|\n \W|^2 y^a &= -\int r^{-2\A-2+\epsilon}e^{2\alpha r^{\epsilon}}   (\D(y^a \n \W)+y^a\W_t)\W   - \int \langle \n \W, \n (r^{-2\A-2+\epsilon}e^{2\alpha r^{\epsilon}})\rangle \W y^a\\
	&\;\;\;-\int_{\{y=0\}}e^{2 \alpha r^{\epsilon}}r^{-2\A-2+\epsilon}V \W^2.
\end{align*}
Now by applying Cauchy-Schwarz inequality, writing $\W$ in terms of $W$  and also by using \eqref{kref} and  \eqref{beta} we get
\begin{align*}
	\int (\D(y^a \n \W)+y^a\W_t)\W r^{-2\A-2+\epsilon}e^{2\alpha r^{\epsilon}} 
	&\le \frac{1}{\alpha} \int   e^{2\alpha r^{\epsilon}} r^{-2 \alpha} (\D(y^a \n \W)+y^a\W_t)^2  y^{-a}+ \alpha \int r^{-2 \alpha -4 +2\epsilon} e^{2\alpha r^{\epsilon}}   \W^2 y^a\\
	&= \frac{1}{\alpha} \int e^{2\alpha r^{\epsilon}} r^{-2 \alpha} g^2 y^{-a}+ \alpha   \int  r^{-n-a-1+2\epsilon} W^2  y^a.
\end{align*}
Now we estimate  the  second term in \eqref{ty0}. We have
\begin{align*}
	\int \langle\n \W, \n (r^{-2\A-2+\epsilon}e^{2\alpha r^{\epsilon}})\rangle \W y^a &=\frac{1}{2}\int \langle\n \W^2, \n (r^{-2\A -2 +\epsilon}e^{2\alpha r^{\epsilon}})\rangle  y^a\\
	&=-\frac{1}{2}\int \W^2\D(\n (r^{-2\A-2+\epsilon}e^{2\A r^\epsilon})y^a)-\frac{1}{2}\int_{\{y=0\}}\W^2(r^{-2\A-2+\epsilon}e^{2\A r^{\epsilon}})_yy^a\\
	&=-\frac{1}{2}\int \W^2\D(\n (r^{-2\A-2+\epsilon}e^{2\A r^\epsilon})y^a).
	\end{align*}
	Over here, we used that 
	\[
	\int_{\{y=0\}}\W^2(r^{-2\A-2+\epsilon}e^{2\A r^{\epsilon}})_yy^a=0.\]	
 Now from direct calculations we have 
	$$ \D(\n (r^{-2\A-2+\epsilon}e^{2\A r^\epsilon}) y^a)=((2\A+2-\epsilon)(2\A-\epsilon-n-a+3)+2\A\epsilon(-4\A -5 +3\epsilon +n+a)r^{\epsilon}+4\A^2\epsilon^2r^{2\epsilon})r^{-2\A-4+\epsilon}e^{2\A r^\epsilon} y^a.$$
	Hence by substituting $	\W=r^{\B}e^{-\A r^\epsilon }W$ and also using \eqref{beta}, we get
	\begin{align}\label{ty5}
			&\int \langle\n \W, \n (r^{-2\A-2+\epsilon}e^{2\alpha r^{\epsilon}})\rangle \W y^a\\ &=-\frac{1}{2} (2\A+2-\epsilon)(2\A-\epsilon-n-a+3)\int r^{-n-a-1+\epsilon} W^2y^a\notag\\
			&\;\;-\A\epsilon(-4\A -5 +3\epsilon +n+a)\int r^{-n-a-1+2\epsilon}W^2 y^a -2\A^2\epsilon^2 \int  r^{-n-a-1+3\epsilon} W^2 y^a\notag\\
			&\ge (-2\A^2 +C\A +CR_0^{\epsilon}\A^2 )\int r^{-n-a-1+\epsilon} W^2y^a\notag\\
			&\ge -\frac{5}{2}\A^2 \int r^{-n-a-1+\epsilon} W^2y^a,\notag
	\end{align}
	provided $R_0$ is small enough and $\alpha$ is sufficiently large.
Hence from \eqref{ty0}-\eqref{ty5}, it follows that  for all $\A >C_0$  where $C_0$ is sufficiently large, we have
 \begin{align}\label{2}
	\int r^{-2\A-2+\epsilon} e^{2\alpha r^\epsilon} |\n \W|^2 y^a 	&\le \frac{1}{\alpha} \int   e^{2\alpha r^{\epsilon}} r^{-2 \alpha} g^2 y^{-a}  + \frac{13}{5}\A^2 \int r^{-n-a-1+\epsilon} W^2y^a-\int_{\{y=0\}} r^{-n-a-1+\epsilon}V W^2.
\end{align}
	Now we consider the integrals corresponding to the second and the third term in \eqref{1}. By integrating by parts, we obtain
	\begin{align}\label{3}
		&-2\B \int r^{-2\A-4+\epsilon}e^{2\alpha r^\epsilon}\langle \n \W, X\rangle \W y^a +2\A\epsilon \int r^{-2 \alpha -4 +\epsilon}e^{2\alpha r^\epsilon}\langle \n \W, X\rangle \W y^a  \\
		&=\B (-2\A-3+\epsilon +n+a)\int r^{-2\A-4+\epsilon}e ^{2\alpha r^\epsilon}\W^2 y^a -\A \epsilon(-2\A-5+2\epsilon +n+a) \int r^{-2\A-4+2\epsilon}e^{2\alpha r^\epsilon}\W^2 y^a \notag\\
		&\le -\frac{3}{2} \A^2 \int r^{-2\A-4+\epsilon}W^2 y^a\ \text{(provided $R_0$ is small enough)}\notag.
	\end{align}
Now concerning the remaining terms in \eqref{1}, the corresponding integrals can be estimated as follows
 \begin{align}\label{4}
	&\int (\beta^2 r^{-2}\W^2 -2 \alpha \beta \epsilon r^{\epsilon -2} \W^2 + \alpha^2 \epsilon^2 r^{2\epsilon -2}\W^2)r^{-2\B}e^{2\A r^{\epsilon}}y^a\notag\\
	&=\B^2\int  r^{-n-a-1+\epsilon}W^2 y^a-2 \alpha \beta \epsilon \int r^{-n-a-1+2\epsilon} W^2y^a + \alpha^2 \epsilon^2 \int  r^{-n-a-1+3\epsilon}W^2y^a \notag\\
	&\le \frac{11}{10} \A^2 \int  r^{-n-a-1+\epsilon}W^2 y^a.
\end{align}
	Hence from \eqref{1}, \eqref{2}, \eqref{3} and \eqref{4}, we get 
	\begin{align}\label{interpo}
		\int r^{-n-a+1+\epsilon} |\n W|^2 y^a &\le \frac{1}{\alpha} \int  e^{2\alpha r^{\epsilon}} r^{-2 \alpha} g^2 y^{-a}  +\frac{11}{5}\A^2 \int  r^{-n-a-1+\epsilon}W^2 y^a-\int_{\{y=0\}} r^{-n-a+1+\epsilon} VW^2.
	\end{align}
	Using \eqref{interpo} in \eqref{i50} we get 
	\begin{align}\label{iut1}
		I_5 \ge &\;\;4\B(n+a-1) \int r^{-n-a-1}   \langle \n W,X\rangle^2 y^a   -4\A\epsilon(n+a-1+\epsilon) \int r^{-n-a-1-\epsilon}\langle \n W,X\rangle^2 y^a \\
		&\;\;\;-2\epsilon^2 \int  e^{2\alpha r^{\epsilon}} r^{-2 \alpha} g^2 y^{-a} -\frac{22}{5}\A^3 \epsilon^2 \int  r^{-n-a-1+\epsilon}W^2 y^a-2\A\epsilon^2\int_{\{y=0\}} r^{-n-a+1+\epsilon} VW^2\notag\\
		&+4\B \int_{\{y=0\}} r^{-n-a+1} VW \langle \n_x W,x\rangle - 4\A\epsilon \int_{\{y=0\}} r^{-n-a+1+\epsilon} VW \langle \n_x W,x\rangle.\notag
	\end{align}
From \eqref{iut} and \eqref{iut1} we obtain
	\begin{align}\label{iut5}
	 &I_2+I_3+I_4+I_5\geq 4\B(n+a-1) \int  r^{-n-a-1} \langle \n W,X\rangle^2  y^a   -4\A\epsilon(n+a-1+\epsilon) \int   r^{-n-a-1-\epsilon}  \langle \n W,X\rangle^2 y^a \\
	&\;\;\;-2\epsilon^2 \int   e^{2\alpha r^{\epsilon}} r^{-2 \alpha} g^2 y^{-a}  +\frac{3}{5} \A^3 \epsilon^2 \int  r^{-n-a-1+\epsilon}W^2 y^a-2\A\epsilon^2\int_{\{y=0\}} r^{-n-a+1+\epsilon} VW^2\notag\\
	&+4\B \int_{\{y=0\}} r^{-n-a+1} VW \langle \n_x W,x\rangle - 4\A\epsilon \int_{\{y=0\}} r^{-n-a+1+\epsilon} VW\langle \n_x W,x\rangle.\notag
\end{align}
	Now the boundary   integral which appears  in \eqref{iut5} can be rewritten in the following way
	\begin{align}\label{bdr1}
		&\int_{\{y=0\}}r^{-n-a+1} VW\langle \n_x W,x\rangle  =\frac{1}{2}\int_{\{y=0\}}r^{-n-a+1} V \langle \n_x W^2,x\rangle \\
		&=-\frac{1}{2}\int_{\{y=0\}}\D_x(V x r^{-n-a+1}) W^2\notag\\
		&=-\frac{1}{2} \int_{\{y=0\}} r^{-n-a+1} W^2 \langle\n_x V, x\rangle -\frac{1-a}{2} \int_{\{y=0\}} r^{-n-a+1}  V W^2.\notag
	\end{align}

	Hence  using \eqref{bdr1} in \eqref{iut5}, we get that for $\A >C$ (if necessary we can increase $C$) and $R_0$ small enough, the following inequality holds
\begin{align}\label{2345}
	I_2+I_3+I_4+I_5\ge &\;4\B(n+a-1) \int  r^{-n-a-1}  \langle \n W,X\rangle^2 y^a   -4\A\epsilon(n+a-1+\epsilon) \int   r^{-n-a-1-\epsilon}  \langle \n W,X\rangle^2  y^a \\
	&\;\;\;-2\epsilon^2 \int   e^{2\alpha r^{\epsilon}} r^{-2 \alpha} g^2 y^{-a} +\frac{3}{5}\A^3 \epsilon^2 \int  r^{-n-a-1+\epsilon}W^2 y^a\notag\\
	& \;\;\;-5\A||V||_1 \int_{\{y=0\}} r^{-n-a+1} W^2.\notag
	\end{align}
Now  by integrating by parts in the $t$-variable, we note that   $$\int r^{2\B-2\A+\epsilon-2}WW_ty^adtdX=\frac{1}{2}\int \partial_t(r^{2\B-2\A+\epsilon-2}W^2y^a)dtdX=0.$$
Thus we have
\begin{align}\label{678}
	I_6=I_7=I_8=0.
\end{align}
We  now estimate $I_9$. Using \eqref{Rellich2}, we have
	\begin{align*}
		I_9=	2	\int r^{2\B-2\A} \D(y^a \n W)W_t&=(4\A-4\B)\int r^{{2\B-2\A}-2} W_t \langle \n W, X \rangle- 2\int_{\{y=0\}}r^{2\B-2\A}VWW_t\\
		&=(4\A-4\B)\int r^{{2\B-2\A}-2} W_t \langle \n W, X \rangle+ \int_{\{y=0\}}r^{2\B-2\A}V_tW^2\\
		& \ge (4\A-4\B)\int r^{{2\B-2\A}-2} W_t \langle \n W, X \rangle -||V||_1 \int_{\{y=0\}} r^{2\B-2\A} W^2,
	\end{align*}
	where in the second line, we integrated by parts in the $t$-variable.\\
	The first integral on RHS $(4\A-4\B)\int r^{{2\B-2\A}-2} W_t \langle \n W, X \rangle$ will be absorb in $I_1$. For that, we use the following algebraic identity 
	$$(A+B+C)^2+2\gamma AC=((1+\gamma)A +B +C)^2-\gamma^2 A^2 -2\gamma A^2 -2\gamma AB, $$
	with $A= 2 \beta r^{\beta -2} \langle \n W, X \rangle$, $B=	-2 \alpha \epsilon r^{\beta + \epsilon -2} \langle \n W, X \rangle$, $C=r^{\beta} W_t$ and $\gamma=(\frac{\alpha}{\beta}-1)$.
	
	Thus we have 
	\begin{align}\label{jk1}
		&\int r^{-2 \alpha}(2 \beta r^{\beta -2} \langle \n W, X \rangle -2 \alpha \epsilon r^{\beta + \epsilon -2} \langle \n W, X \rangle+ r^{\beta} W_t)^2 + (4 \alpha - 4 \beta )\int r^{-2 \alpha + 2 \beta-2} W_t \langle \n W, X \rangle\\
		&=\int r^{-2 \alpha}(2 \alpha r^{\beta -2} \langle \n W, X \rangle -2 \alpha \epsilon r^{\beta + \epsilon -2} \langle \n W, X \rangle+ r^{\beta} W_t)^2  + 4 (\beta^2-\alpha^2) \int r^{-n-a-1}\langle \n W, X \rangle^2\notag\\
		& \;\;\; \; +(8 \alpha^2 \epsilon -8 \alpha \beta \epsilon) \int r^{-n-a-1+ \epsilon}\langle \n W, X \rangle^2.\notag
	\end{align}
	Now from \eqref{beta} it follows that  $8 \alpha^2 \epsilon -8 \alpha \beta \epsilon  \geq 0$. Using this in \eqref{jk1} it follows
 \begin{align}\label{19}
I_1 +I_9 \ge &  4 (\beta^2-\alpha^2) \int r^{-n-a-1}\langle \n W, X \rangle^2	-||V||_1 \int_{\{y=0\}} r^{-n-a+3} W^2.
\end{align}
Note that from \eqref{beta}, $4(\B^2-\A^2) <0,$ therefore the term  $4 (\beta^2-\alpha^2) \int r^{-n-a-1}\langle \n W, X \rangle^2$ in \eqref{19} above is unfavourable. However at this point  we make the crucial and subtle observation that such a  term can be  absorbed in   the  term $4\B(n+a-1) \int r^{-n-a-1}   \langle \n W,X\rangle^2 y^a$  in \eqref{2345} because
$$4(\B^2-\A^2) + 4\B(n+a-1)=8\B-(n+a-3)^2>6\B,$$ for all $\B$ large.
Now by taking $R_0$ small enough, we can ensure that the following  term in \eqref{2345}, i.e.  $4\A\epsilon(n+a-1+\epsilon) \int   r^{-n-a-1-\epsilon}  \langle \n W,X\rangle^2 y^a$ can be estimated in the following way \begin{equation}\label{iut10}
 4\A\epsilon(n+a-1+\epsilon) \int   r^{-n-a-1-\epsilon}  \langle \n W,X\rangle^2 y^a < 3 \beta \int r^{-n-a-1}\langle \n W,X\rangle^2 y^a.
 \end{equation} Hence from \eqref{2345}, \eqref{678}, \eqref{19} and \eqref{iut10} we have for all $\alpha$ large
\begin{align}\label{bulk}
&\int   e^{2\alpha r^{\epsilon}} r^{-2 \alpha} g^2 y^{-a}\ge 	-2\epsilon^2 \int   e^{2\alpha r^{\epsilon}} r^{-2 \alpha}g^2 y^{-a} +\frac{3}{5}\A^3 \epsilon^2 \int  r^{-n-a-1+\epsilon}W^2 y^a\\
	 &\;\;\;-6\A||V||_1 \int_{\{y=0\}} r^{-n-a+1} W^2.\notag
\end{align}

	Now we estimate  the boundary integral $ \int_{\{y=0\}} r^{-n-a+1} W^2$. Using polar coordinates, we have
	\begin{align*}
		\int_{\{y=0\}}  r^{-n-a+1} W^2 &= \int \int \int_{S^{n-1}} r^{-a}W^2(r \omega' ,0)d \omega dr dt.
	\end{align*}
	Now by applying the surface  trace inequality (see Lemma 3.1 in \cite{Ru}) and using $|\n W|^2= W_r^2 + \frac{1}{r^2}|\n_{S^n}W|^2$ which implies $|\n_{S^n}W|^2 \leq r^2|\n W|^2$,  we get that  there exists $C_T=C_T(n,a)$ such that for all $\tau >1$
	\begin{align*}
		\int  r^{-n-a+1} W^2 &\leq C_T \tau^{2-2s} \int \int r^{-a} \int_{S_+^n} \omega_{n+1}^a W^2(r \omega) + C_T \tau^{-2s} \int\int r^{-a} \int_{S_+^n} \omega_{n+1}^a |\n_{S^n}W|^2\\
		&\leq  C_T \tau^{2-2s} \int r^{-a} \int_{S_+^n} \omega_{n+1}^a W^2(r \omega) + C_T \tau^{-2s} \int r^{-a} \int_{S_+^n} \omega_{n+1}^a r^2 |\n W(r \omega)|^2. 
	\end{align*}
	Now by transforming it back to Eucledian coordinates, we get
	\begin{align}\label{traceec}
		\int_{\{y=0\}}  r^{-n-a+1} W^2 &\leq C_T \tau^{2-2s} \int r^{-2a-n}W^2 y^a  + C_T \tau^{-2s}  \int r^{-2a-n+2}|\n W|^2 y^a \\
		&\leq C_T \tau^{2-2s}R_0^{1-a-\epsilon} \int r^{-n-a-1+\epsilon}W^2 y^a + C_T \tau^{-2s} R_0^{1-a-\epsilon} \int r^{-n-a+1+\epsilon}|\n W|^2 y^a.\notag
	\end{align}
In \eqref{traceec}, we used that since $\epsilon = (1-a)/4$ therefore $1-a-\epsilon >0$ which in particular implies that $r^{1-a-\epsilon} \leq R_0^{1-a-\epsilon}.$ Now  using the estimate \eqref{interpo} in \eqref{traceec} we obtain
	\begin{align*}
		\int_{\{y=0\}} r^{-n-a+1} W^2
		& \leq C_T \tau^{2-2s}R_0^{1-a-\epsilon} \int r^{-n-a-1+\epsilon}W^2 y^a\\
		&\;\;\; + C_T \tau^{-2s}R_0^{1-a-\epsilon}\left(\frac{1}{\alpha} \int   e^{2\alpha r^{\epsilon}} r^{-2 \alpha} g^2 y^{-a} + \frac{11}{5} \alpha^2 \int  r^{ -n-a-1+\epsilon}W^2 y^{a} \right)\\
		&\;\;\; +C_T\tau^{-2s}R_0^{1-a-\epsilon}  ||V||_1\int_{\{y=0\}}r^{-n-a+1+\epsilon} W^2.
	\end{align*} 
We now take $\tau$  such that $\tau^{-2s}||V||_1<1$. More precisely, we  let $\tau =||V||_{1}^{1/2s} +1.$  Then by taking  $R_0$ small enough, we can ensure that the term   $C_T\tau^{-2s}R_0^{1-a-\epsilon}  ||V||_1\int_{\{y=0\}}r^{-n-a+1+\epsilon} W^2$ can be absorbed in the left hand side of the above inequality. Consequently we have for new $C_T$ that the following inequality holds
	\begin{align}\label{bdr}
		\int_{\{y=0\}} r^{-n-a+1} W^2
		& \leq C_T \tau^{2-2s}R_0^{1-a-\epsilon} \int r^{-n-a-1+\epsilon}W^2 y^a\notag\\
		&\;\;\; +C_T \tau^{-2s}R_0^{1-a-\epsilon}\left(\frac{1}{\alpha} \int  e^{2\alpha r^{\epsilon}} r^{-2 \alpha} g^2 y^{-a} + \frac{11}{5} \alpha^2 \int r^{ -n-a-1+\epsilon} W^2y^a\right). 
	\end{align} 
Using \eqref{bdr} in \eqref{bulk} and also that $\tau= ||V||_1^{1/2s} +1$, we get 
\begin{align}\label{iu20}
	&	2\int  (\D(y^a \n \W)+\W_t)^2 e^{2\alpha r^{\epsilon}} r^{-2 \alpha}\\&\ge \frac{3}{5}\A^3 \epsilon^2 \int  r^{-n-a-1+\epsilon}W^2 y^a
	-6\A  C_T \tau^{2}R_0^{1-a-\epsilon} \int r^{-n-a-1+\epsilon}W^2y^a\notag \\
	&-6\A C_T R_0^{1-a-\epsilon}\left(\frac{1}{\alpha} \int  (e^{2\alpha r^{\epsilon}} r^{-2 \alpha} g^2 y^{-a} + 2.2 \alpha^2 \int W^2 r^{ -n-a-1+\epsilon}\right).\notag
\end{align}
Now by letting $\alpha= \tau +C$ for a sufficiently large $C$  and  finally by rewriting $W$ in  terms of $\W$ we deduce from \eqref{iu20} that for sufficiently small $R_0$ the following inequality holds
\begin{align*}
	 \A^3  \int  r^{-2\A-4+\epsilon}\W^2 y^a < C	\int r^{-2\A} e^{2\alpha r^{\epsilon}} g^2 y^{-a}, 
\end{align*}
which completes the proof. \end{proof}
\subsection{Vanishing order estimate in the bulk}
Now given that the Carleman estimate \eqref{carleman}  in Lemma \ref{carlmanbulk} holds for all $\A >C(n,s)(1+||V||_{C^{1}}^{1/2s})$, we can now argue  as in the proof of Theorem 1.2 in \cite{BGM}   or as in the proof of Theorem 15  in \cite{Ve}, to assert that the following   quantitative vanishing order estimate in the bulk holds.  We  provide the details for the sake of completeness.
\begin{lemma}\label{bulkdf}
	Let $U$ be a weak solution of \eqref{extpr}. Then there exists $C$ universal such that for all $\rho < R_0/8$, we have
	\begin{align*}
		\int_{\mathbb B_{\rho}\times (0,1)}  U^2 y^a  \ge C \rho^{A},
	\end{align*} 
where $A=C ||V||^{1/2s}_{C^1} +C\left(1+\int_{\mathbb B_{R_0} \times (0,1)}y^aU^2\right)\big/\int_{\mathbb B_{R_0/4}\times (1/4,3/4)}y^aU^2+C$ and $R_0$ is as in Lemma \ref{carlmanbulk}.
\end{lemma}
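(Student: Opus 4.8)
The plan is to run the standard Carleman-to-doubling machinery using the estimate \eqref{carleman}. First I would fix a cutoff function. Let $\phi$ be smooth, compactly supported in $(\overline{\mathbb B_{R_0}}\setminus \{0\})\times (0,1)$, with $\phi \equiv 1$ on $\mathbb B_{R_0/2}\setminus \mathbb B_{2\rho}$ times $(1/4, 3/4)$, and with the usual gradient bounds: $|\nabla\phi| + |\partial_t \phi| \lesssim \rho^{-1}$ on the inner annular region $\{\rho \le |X| \le 2\rho\}$ and $\lesssim 1$ on the outer transition regions (near $|X| = R_0$ and near $t = 0, 1$, and near $y = 1$). Apply Lemma \ref{carlmanbulk} to $\W = \phi U$. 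Since $U$ solves \eqref{extpr}, which after the sign change $t \to -t$ is of the form \eqref{kref} with $g = 0$ for $U$ itself, the function $\W = \phi U$ satisfies $\D(y^a\nabla\W) + y^a\W_t = g$ with $g = U(\D(y^a\nabla\phi) + y^a\phi_t) + 2y^a\langle\nabla\phi,\nabla U\rangle$, which is supported in the region where $\phi$ is non-constant. Also $\py\W = V\W$ on $\{y = 0\}$ is preserved since $\phi$ vanishes near $y = 0$ is \emph{not} true — rather $\phi$ need not vanish at $y=0$, but $\py(\phi U) = \phi \py U = \phi V U = V\W$ since $\phi$ is independent of... actually $\phi$ may depend on $y$; one handles this by choosing $\phi$ independent of $y$ for $y$ small, so the Neumann condition passes through cleanly.

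Next I would insert this $g$ into \eqref{carleman}. The left side is bounded below by the integral over the good region $\mathbb B_{R_0/4}\times(1/4,3/4)$ (where $\phi\equiv 1$), giving a term like $\alpha^3 e^{2\alpha(R_0/4)^\epsilon}(R_0/4)^{-2\alpha-4+\epsilon}\int_{\mathbb B_{R_0/4}\times(1/4,3/4)} U^2 y^a$, where I use that $|X|^{-2\alpha}e^{2\alpha|X|^\epsilon}$ is monotone in the relevant range. The right side splits into the contribution from the inner annulus $\{\rho \le |X|\le 2\rho\}$ and the contribution from the outer transitions. On the inner annulus, $|g|^2 y^{-a} \lesssim \rho^{-2}(U^2 + |\nabla U|^2)y^a \cdot y^{-2a}$; after using Caccioppoli/energy estimates (Lemma \ref{reg1} gives the interior energy bound) to absorb $\int |\nabla U|^2 y^a$ into $\int U^2 y^a$ over a slightly larger ball, the inner contribution is controlled by $\rho^{-2-2\alpha}e^{2\alpha(2\rho)^\epsilon}\int_{\mathbb B_{3\rho}\times(0,1)} U^2 y^a$, and $e^{2\alpha(2\rho)^\epsilon}$ is bounded by a constant for $\rho$ small. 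The outer transition contribution is bounded by $C e^{2\alpha R_0^\epsilon} R_0^{-2\alpha}\int_{\mathbb B_{R_0}\times(0,1)} U^2 y^a$ (up to fixed powers), again after an energy estimate.

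Then I would choose $\alpha$. Rearranging, I get an inequality of the schematic form
\begin{align*}
\alpha^3 e^{2\alpha(R_0/4)^\epsilon}(R_0/4)^{-2\alpha}\,\mathcal I_{good}
\;\le\; C\rho^{-C}\rho^{-2\alpha}\,\mathcal I_{3\rho} + C e^{2\alpha R_0^\epsilon} R_0^{-2\alpha}\,\mathcal I_{R_0},
\end{align*}
where $\mathcal I_{good} = \int_{\mathbb B_{R_0/4}\times(1/4,3/4)} U^2 y^a$, etc. Since $e^{2\alpha R_0^\epsilon}R_0^{-2\alpha} \ll e^{2\alpha(R_0/4)^\epsilon}(R_0/4)^{-2\alpha}$ because $(R_0/4)^\epsilon < R_0^\epsilon$ and $R_0/4 < R_0$... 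I need to be careful with signs here: the factor $(R_0/4)^{-2\alpha}$ is \emph{larger} than $R_0^{-2\alpha}$, and $e^{2\alpha(R_0/4)^\epsilon}$ is \emph{smaller} than $e^{2\alpha R_0^\epsilon}$; the net product comparison is what one must check, and this is the standard "the Carleman weight separates the scales" computation — one picks $R_0$ small so the polynomial scale factor dominates, then the $R_0$-term is absorbed into the left side for $\alpha$ large. After absorption,
\begin{align*}
\alpha^3\,\mathcal I_{good} \le C\rho^{-C}\left(\frac{\rho}{R_0/4}\right)^{-2\alpha} e^{-2\alpha(R_0/4)^\epsilon}\,\mathcal I_{3\rho}.
\end{align*}
Taking logarithms: $\log\mathcal I_{3\rho} \ge 2\alpha\log(\rho) + (\text{terms in }\alpha, R_0) + \log\mathcal I_{good} - C\log\rho$. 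Now I optimize over admissible $\alpha$, i.e. over $\alpha \ge C(n,s)(1 + \|V\|_{C^1}^{1/2s})$. The coefficient of $\log(1/\rho)$ in the resulting lower bound for the vanishing order is exactly $2\alpha$ plus lower-order, and the constraint forces $\alpha \gtrsim \|V\|_{C^1}^{1/2s}$; the remaining additive constant $\mathcal C_2$ absorbs the ratio $\bigl(1 + \int_{\mathbb B_{R_0}}y^a U^2\bigr)\big/\int_{\mathbb B_{R_0/4}\times(1/4,3/4)}y^a U^2$ that appears because $\mathcal I_{good}$ sits in the denominator after dividing through. Relabeling $\rho \to \rho$ (the paper states the bound on $\mathbb B_\rho\times(0,1)$, which contains $\mathbb B_{3\rho/3}$... one uses $\mathbb B_\rho \supset$ nothing — rather one runs the argument with $3\rho$ replaced by $\rho$ throughout, which only changes fixed constants) yields $\int_{\mathbb B_\rho\times(0,1)} U^2 y^a \ge C\rho^A$ with $A$ of the stated form.

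The main obstacle is the bookkeeping in the scale-separation step: verifying that with $\alpha$ chosen as a large multiple of $\|V\|_{C^1}^{1/2s}$ and $R_0$ fixed small (independently of $V$ — this is important, since $R_0$ in Lemma \ref{carlmanbulk} does not depend on $V$), the outer boundary term is genuinely negligible compared to the "good" term, uniformly in the size of $V$. One also has to be careful that the energy estimate used to replace $\int|\nabla U|^2 y^a$ by $\int U^2 y^a$ picks up a factor depending on $\|V\|_{L^\infty}$ through the Neumann condition $\py U = VU$; this factor is polynomial in $\|V\|$, hence harmless after taking logarithms and comparing with the exponential gain $e^{2\alpha(\cdots)}$ and the genuine $\alpha^3$ prefactor. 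The rest is the routine three-ball/doubling iteration, so I would present the cutoff choice and the final optimization in detail and treat the energy estimates and elementary weight comparisons as standard.
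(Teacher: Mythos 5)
Your outline follows the right skeleton — apply Lemma \ref{carlmanbulk} to a cutoff of $U$, split the resulting error term $g$ by support, control the $\nabla U$ pieces by Caccioppoli, then optimize over admissible $\alpha$, with the ratio $\bigl(1+\int_{\mathbb B_{R_0}}U^2 y^a\bigr)/\int_{\mathbb B_{R_0/4}\times(1/4,3/4)}U^2 y^a$ absorbed into the additive constant. The gap is in the treatment of the \emph{time}-cutoff term $\phi_t U$, which is exactly the new parabolic difficulty the paper highlights. Since your cutoff must vanish near $t=0$ and $t=1$, the set $\{\partial_t\phi\ne 0\}$ is of the form $\{\rho\le|X|\le R_0\}\times(\text{time layers})$: it reaches all the way in to $|X|\sim\rho$. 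On that part the Carleman weight $|X|^{-2\alpha}e^{2\alpha|X|^\epsilon}$ is of order $\rho^{-2\alpha}$, not $R_0^{-2\alpha}$. You classify this contribution with the ``outer transitions'' and bound it by $Ce^{2\alpha R_0^\epsilon}R_0^{-2\alpha}\mathcal I_{R_0}$, but the honest crude bound is $C\rho^{-2\alpha}\int_{\text{time layers}}U^2 y^a$, which is comparable to (and in general swamps) the left-hand side $\alpha^3(R_0/4)^{-2\alpha-4+\epsilon}\mathcal I_{good}$. With a generic cutoff satisfying $|\partial_t\phi|\lesssim 1$ the scale-separation step therefore does not close; after you divide through you only recover a trivial inequality.

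The paper's proof fixes this by the device of Vessella \cite{Ve}: the time cutoff $\eta$ is chosen with \emph{exponential} decay near its support boundary (the explicit formula \eqref{d4}), and the time-transition integral is split according to whether $\eta_t^2/\eta^2$ is larger or smaller than a multiple of $\alpha^3 r^{-4+\epsilon}$. On the small region, $r^{-2\alpha}e^{2\alpha r^\epsilon}\eta_t^2 U^2=(\eta_t^2/\eta^2)\,r^{-2\alpha}e^{2\alpha r^\epsilon}\W^2$ (using $\phi\equiv 1$ there, so $\W^2=\eta^2 U^2$) is pointwise absorbed by $\tfrac{\alpha^3}{2}r^{-2\alpha-4+\epsilon}e^{2\alpha r^\epsilon}\W^2$ on the left. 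On the large region, the explicit exponential form of $\eta$ forces $\eta$ to be so small that $r^{-2\alpha}e^{2\alpha r^\epsilon}\eta_t^2$ is bounded by a universal constant, yielding the estimate \eqref{vc}; this is where the $\alpha^3$ prefactor (rather than, say, $\alpha$) and the extra $r^{-4+\epsilon}$ gain in the Carleman weight are used in an essential way — the paper says so explicitly. Your proposal, as written, has no substitute for this step, so it is not a complete proof. (Two minor points: one must also record the nondegeneracy assumption $\int_{\mathbb B_{R_0/4}\times(1/4,3/4)}U^2 y^a\ne 0$, with the complementary case handled by qualitative unique continuation; and in $|g|^2 y^{-a}$ the power of $y$ should come out as $y^a$, not $y^a\cdot y^{-2a}$, though this typo does not affect the structure.)
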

\begin{proof}
	In the proof, we will denote an all purpose constant by the letter $C$, which might vary from line to line, and will depend only on $n$, $s$, $R_0$ and $T$. For simplicity of the computations and notational convenience we work with the symmetric time-interval $(-T,T)$, instead of $(0,T),$ in our case $T=1.$ For notational convenience, we denote $|X|$ by $r$ and we define the following sets:
	\begin{align*}
		\mathbb A (r_1,r_2)&:=\{(x,y)\in \R^n \times \{y>0\}: r_1 < |X| < r_2\}\\
		A(r_1,r_2)&:=\{(x,0): r_1 < |x| < r_2\}.
	\end{align*}  
	Let $R_0$ be as in the Lemma \ref{carlmanbulk}.  Let $0 < r_1 < r_2/2 < 2r_2= R_0/2$ be fixed, and let $\phi(X)$ be a smooth radial function such that    
	$\phi(X) \equiv 0$ in $\mathbb B_{\frac{
			r_1}{2}} \cup \mathbb B_{2r_2}^c$ and 
	$\phi(X) \equiv 1$ in $\mathbb A (r_1,r_2)$. We now let $T_1= 3T/4$ and $T_2= T/2$, so that $0<T_2<T_1<T$. As in  \cite{Ve}, we let $\eta(t)$ be a smooth even function such that $\eta(t) \equiv 1$ when $|t| < T_2$, $\eta(t) \equiv 0$, when $|t| > T_1$. Furthermore, it will be important in the sequel (see \eqref{bn1} below) that $\eta$  decay exponentially near $t = \pm T_1$. As in (118) of \cite{Ve} we take  
	\begin{equation}\label{d4}
		\eta(t)= \begin{cases} 0\ \ \ \ \ \ \ \ \ -T\le t\le -T_1
			\\
			\exp \left(-\frac{T^3(T_2+t)^4}{(T_1 +t)^3(T_1-T_2)^4} \right)\ \ \ \ \ \ \ -T_1\le t \le -T_2,
			\\
			1,\ \ \ \ \ \ \  t \in -T_2\le t \le 0.
		\end{cases}
	\end{equation} 
	Without loss of generality, we will assume that
	\begin{equation}\label{assume}
		\int_{ \mathbb B_{r_2} \times (-T_2, T_2)} U^2y^a \neq 0.
	\end{equation}
	Otherwise, $U \equiv 0$  in $\mathbb B_{r_2} \times (-T_2, T_2)$, which by Theorem 15 (b) in \cite{Ve}  implies $U\equiv 0$ in $\mathbb B_{R_0}\times (-T_2, T_2)$ and by the assertions that follow we could conclude $U \equiv 0$ in $\mathbb B_{R_0}\times (-T, T).$   
	
	We then let  $\W= \phi \eta U$. Since $U$ is a solution of \eqref{extpr} and $\phi$ is a radial function, we have $\py \phi =0$ on the thin set $\{y=0\},$ therefore  we find that $\W$ solves
	\begin{align*}
		\begin{cases}
			\D(y^a \n \W) + y^a \W_t= 2 \eta y^a \langle\n U, \n \phi \rangle + \D(y^a \n \phi)\eta U + y^a \phi U \eta_t \;\;\; \text{in $\{y>0\}$},
			\\
			\py \W = V\W\;\;\; \text{on $\{y=0\} .$}
		\end{cases}
	\end{align*}
	Since $\eta \phi$ is compactly supported in $(\overline{\mathbb B_{R_0}}\setminus \{0\})\times (-T,T)$, $\W$ is compactly supported in  $(\overline{\mathbb B_{R_0}}\setminus \{0\})\times (-T,T)$. Therefore we apply the Carleman estimate \eqref{carleman} to $\W$, obtaining
	\begin{align*}
		\alpha^3 \int r^{-2\alpha - 4+\epsilon} e^{2\alpha r^\epsilon} \W^2y^a   &\leq C \int r^{-2\alpha}  e^{2\alpha r^\epsilon} (2 \eta y^a \langle\n U, \n \phi\rangle + \D(y^a \n \phi)\eta U + y^a \phi U \eta_t)^2 y^{-a}.
	\end{align*}
	As a consequence of the algebraic inequality $(A+B+C)^2 \le 3( A^2 + B^2 +C^2)$, we get  
	\begin{align}\label{et1}
		\alpha^3 \int r^{-2\alpha - 4+\epsilon} e^{2\alpha r^\epsilon} \W^2 y^a  &\leq 
		C \int r^{-2\alpha}  e^{2\alpha r^\epsilon} ( \eta^2 |\n U|^2 |\n \phi|^2 y^a + (\D(y^a \n \phi))^2\eta^2 U^2y^{-a} +  (\phi U \eta_t)^2y^a).
	\end{align}
	We now recall that the way $\phi$ and $\eta$ have been chosen, $\nabla \phi$ is supported in $\mathbb A\left(\frac{r_1}{2}, r_1\right) \cup \mathbb A\left(r_2, 2r_2\right)$ and in this set
	$|\n \phi(X) | =\frac{|\phi'(|X|)X|}{|X|}= O(1/r)$ and $|\nabla^2 \phi(X) | = O(1/r^2),$ which gives $$ |\D( y^a \n \phi)|= |\Delta \phi y^a+ ay^{a-1} \phi_y|= |\Delta \phi y^a+ ay^{a-1}(y \phi')/r|=y^a O(1/r^2),$$ hence \eqref{et1} becomes
	\begin{align}\label{et22}
		& \alpha^3 \int r^{-2\alpha- 4+\epsilon} e^{2\alpha r^\epsilon} \W^2 y^a 
		\leq C \int_{ \mathbb A \left(\frac{r_1}{2}, r_1\right) \times (-T_1,T_1)  } e^{2\alpha r^\epsilon} ( r^{-2\alpha-2} |\n U|^2 y^a + r^{-2\alpha-4}  U^2y^a)
		\\
		&   + C  \int_{  \mathbb A \left(r_2, 2r_2\right) \times (-T_1, T_1)  } e^{2\alpha r^\epsilon} ( r^{-2\alpha-2} |\n U|^2 y^a + r^{-2\alpha-4}  U^2y^a) 
		\notag\\
		& + C \int_{ \mathbb A \left(\frac{r_1}{2}, 2r_2\right) \times (-T_1, T_1) } r^{-2\alpha} e^{2\alpha r^\epsilon}    \phi^2  \eta_t^2U^2y^a.
		\notag
	\end{align}
	Since the functions 
	\begin{equation}\label{monotone}
		r \to r^{-2\alpha-4} e^{2\alpha r^\epsilon}, \ \ \ \ \ 	r \to r^{-2\alpha-2} e^{2\alpha r^\epsilon}\ \ \ \ \ \ \text{and}\  \ \ \ \ \ \  r \to r^{-2\alpha} e^{2\alpha r^\epsilon}  
	\end{equation}
	are decreasing in $(0,1)$, we  can estimate the first and second integral in the right hand side of  \eqref{et22} in the following way
	\begin{align}\label{r11}
		& \int_{  \mathbb A \left(\frac{r_1}{2},r_1\right) \times (-T_1, T_1)  } e^{2\alpha r^\epsilon} ( r^{-2\alpha-2} |\n U|^2 y^a + r^{-2\alpha-4}  U^2y^a)\\ 
		&\leq C \left(\frac{r_1}{2}\right)^{-2\alpha-2} e^{\frac{2\alpha r_1^\epsilon}{2^{\epsilon}}}\int_{  \mathbb A \left(\frac{r_1}{2},r_1\right) \times (-T_1, T_1)  } |\nabla U|^2 y^a + C \left(\frac{r_1}{2}\right)^{-2\alpha-4} e^{\frac{2\alpha r_1^\epsilon}{2^{\epsilon}}} \int_{  \mathbb A \left(\frac{r_1}{2},r_1\right) \times (-T_1, T_1)  }  U^2 y^a\notag
	\end{align}
	and
	\begin{align}\label{r22}
		&\int_{  \mathbb A (r_2,2r_2) \times (-T_1, T_1)  } e^{2\alpha r^\epsilon} ( r^{-2\alpha-2} |\n U|^2 y^a + r^{-2\alpha-4}  U^2y^a)\\ 
		&\leq C r_2^{-2\alpha-2} e^{2\alpha r_2^\epsilon} \int_{  \mathbb A (r_2,2r_2) \times (-T_1, T_1)  } |\nabla U|^2 y^a + C r_2^{-2\alpha-4} e^{2\alpha r_2^\epsilon} \int_{  \mathbb A (r_2,2r_2) \times (-T_1, T_1)  }  U^2 y^a.\notag
	\end{align}
	We then note that the following  energy estimate holds
	\begin{align}\label{cc1}
		\int_{ \mathbb{A}\left(\frac{r_1}{2},r_1\right) \times (-T_1, T_1)  } |\n U|^2 y^a \le \frac{C\A^2}{r_1^2}\int_{ \mathbb{A}\left(\frac{r_1}{4},\frac{3r_1}{2}\right) \times (-T, T)  } U^2 y^a.
	\end{align}
	\eqref{cc1} follows from  the energy estimate in the proof of  Theorem 5.1   in \cite{BG}  and also by using that $\alpha = C(||V||_1^{1/2s}+1)$.

	Similarly we find
	\begin{align}\label{cc2}
		\int_{  \mathbb{A}\left(r_2,2r_2\right) \times (-T_1, T_1)  } |\n U|^2 y^a \le \frac{C\A^2}{r_2^2}\int_{  \mathbb{A}{\left(\frac{r_2}{2},4r_2\right)} \times (-T, T)  } U^2 y^a.
	\end{align}
	Using  \eqref{cc1} in \eqref{r11} we get for some universal $\tilde C$ that the following inequality holds
	\begin{align}\label{r12}
		C  \int_{  \mathbb A \left(\frac{r_1}{2},r_1\right) \times (-T_1, T_1)  } e^{2\alpha r^\epsilon} ( r^{-2\alpha-2} |\n U|^2 y^a + r^{-2\alpha-4}  U^2y^a) 
		\leq \tilde{C}\A^2 \left(\frac{r_1}{2}\right)^{-2\alpha-4} e^{\frac{2\alpha r_1^\epsilon}{2^{\epsilon}}}\int_{  \mathbb A \left(\frac{r_1}{4},\frac{3r_1}{2}\right) \times (-T, T)  } U^2 y^a. 
	\end{align}
Similarly, using \eqref{cc2} in \eqref{r22} we obtain	\begin{align}\label{r23}
		C  \int_{  \mathbb A (r_2,2r_2) \times (-T_1, T_1)  } e^{2\alpha r^\epsilon} ( r^{-2\alpha-2} |\n U|^2 y^a + r^{-2\alpha-4}  U^2y^a)\leq \tilde{C}\A^2 r_2^{-2\alpha-4} e^{2\alpha r_2^\epsilon} \int_{  \mathbb A \left(\frac{r_2}{2}, 4r_2\right) \times (-T, T)  }  U^2 y^a.
	\end{align}
	Thus we have estimated the first and second term in the right-hand side of \eqref{et22}. We now estimate the last term in the right-hand side of \eqref{et22}. We start by spliting the last term in the right-hand side of \eqref{et22} as follows
	\begin{align}\label{spt}
		& C \int_{ \mathbb A \left(\frac{r_1}{2},2r_2\right) \times (-T_1,T_1) } r^{-2\alpha} e^{2\alpha r^\epsilon}    \phi^2  \eta_t^2 U^2 y^a= C \int_{ \mathbb A \left(\frac{r_1}{2},r_1\right) \times (-T_1,T_1)} r^{-2\alpha} e^{2\alpha r^\epsilon}   \phi^2 \eta_t^2 U^2y^a
		\\
		& + C \int_{ \mathbb A \left(r_1,r_2\right) \times (-T_1,T_1)} r^{-2\alpha}  e^{2\alpha r^\epsilon}  \phi^2 \eta_t^2U^2y^a + C \int_{ \mathbb A \left(r_2,2r_2\right) \times (-T_1, T_1)} r^{-2\alpha} e^{2\alpha r^\epsilon}   \phi^2 \eta_t^2 U^2y^a.
		\notag
	\end{align}
	Using $|\eta_t| \leq C/T$, $\phi \leq 1$ and \eqref{monotone}, we observe that the first term of the right hand side of \eqref{spt} can be estimated as
	\begin{align}\label{g1}
		C \int_{\mathbb A \left(\frac{r_1}{2},r_1\right) \times (-T_1,T_1)} r^{-2\alpha} e^{2\alpha r^\epsilon}   \phi^2 \eta_t^2 U^2y^a &\leq  C \left(\frac{r_1}{2} \right)^{-2\alpha} e^{ \frac{2 \alpha r_1^\epsilon}{2^\epsilon}} \int_{\mathbb A \left(\frac{r_1}{2},r_1\right)  \times (-T_1,T_1)}  U^2y^a\\
		&\leq C \left( \frac{r_1}{2C_1}\right) ^{-2\alpha}\int_{\mathbb A \left(\frac{r_1}{2},r_1\right)  \times (-T_1,T_1)}  U^2y^a\notag\end{align}
	where $C_1=e^{R_0^\epsilon}$, which is a consequence of the facts that the exponential function is a increasing function and $r_1/2 <R_0.$ Since $R_0$ is a universal constant, $C_1$ is a universal constant. Similarly, the last term  in the right hand side of \eqref{spt} can be upper bounded in the following way
	\begin{equation}\label{g2}
		C \int_{\mathbb A (r_2,2r_2) \times (-T_1,T_1)} r^{-2\alpha} e^{2\alpha r^\epsilon}  \phi^2 \eta_t^2 U^2 y^a \leq  C r_2^{-2\alpha} e^{2\alpha r_2^\epsilon}  \int_{\mathbb A (r_2,2r_2) \times (-T_1,T_1)}  U^2y^a.
	\end{equation}
	Now by employing a fairly deep  idea  in the proof of Theorem 15 in \cite{Ve} ( see pages 659-661 in \cite{Ve} or  the proof of (3.12) in \cite{BGM}), we can assert that the following estimate holds for the second term in the right-hand side of \eqref{spt} 	\begin{align}\label{vc}
		&C\int_{\mathbb A (r_1,r_2) \times (-T_1,T_1)}  r^{-2\alpha} e^{2\alpha r^\epsilon}   \eta_t^2 \phi^2 U^2 y^a \leq \frac{\A^3}{2}\int_{\mathbb A (r_1,r_2) \times (-T_1,T_1)}  r^{-2\alpha-4 +\epsilon} e^{2\alpha r^\epsilon}   \W^2 y^a+  C \int_{\mathbb B_{R_0 } \times (-T,T)} U^2y^a. 
	\end{align} 
	Over here we would like to emphasize that the presence of the coefficient $\alpha^3$ in front of the integral $\int_{\mathbb A (r_1,r_2) \times (-T_1,T_1)}  r^{-2\alpha-4 +\epsilon} e^{2\alpha r^\epsilon}   \W^2 y^a$ plays a crucial role in the derivation of  the estimate \eqref{vc}.  Therefore  we see that  the presence of the coefficient  $\alpha^3$  in front of the same  integral in \eqref{carleman} is very crucial for  deducing the vanishing order estimate from \eqref{carleman}. This is indeed a new feature in the parabolic case.
	
	Now using the estimates \eqref{g1}, \eqref{g2} and \eqref{vc} in \eqref{spt}, we get
	\begin{align}\label{spt1}
		&C \int_{ \mathbb A \left(\frac{r_1}{2},2r_2\right) \times (-T_1,T_1) } r^{-2\alpha} e^{2\alpha r^\epsilon}    \phi^2  \eta_t^2 U^2 y^a\\
		&\le  C \left( \frac{r_1}{2C_1}\right) ^{-2\alpha}\int_{\mathbb A \left(\frac{r_1}{2},r_1\right)  \times (-T_1,T_1)}  U^2y^a +C r_2^{-2\alpha} e^{2\alpha r_2^\epsilon}  \int_{\mathbb A (r_2,2r_2) \times (-T_1,T_1)}  U^2y^a\notag\\
		&\;\;+\frac{\A^3}{2}\int_{\mathbb A (r_1,r_2) \times (-T_1,T_1)}  r^{-2\alpha-4 +\epsilon} e^{2\alpha r^\epsilon}   \W^2 y^a+  C \int_{\mathbb B_{R_0} \times (-T,T)} U^2y^a.\notag
	\end{align}
Thus we have estimated all the three terms in the right-hand side of \eqref{et22}. 
	Therefore using  \eqref{r12}, \eqref{r23} and \eqref{spt1} in \eqref{et22}, we find 
	\begin{align}\label{et02}
		&\A^3\int_{\mathbb A (r_1,r_2) \times (-T_1,T_1)}  r^{-2\alpha-4 +\epsilon} e^{2\alpha r^\epsilon}   \W^2 y^a\\
		&\le C\A^2 \left(\frac{r_1}{2}\right)^{-2\alpha-4} e^{\frac{2\alpha r_1^\epsilon}{2^{\epsilon}}}\int_{  \mathbb A \left(\frac{r_1}{4},\frac{3r_1}{2}\right) \times (-T, T)  } U^2 y^a +C\A^2 r_2^{-2\alpha-4} e^{2\alpha r_2^\epsilon} \int_{  \mathbb A \left(\frac{r_2}{2},4r_2\right) \times (-T, T)  }  U^2 y^a\notag\\
		& \;\;+ C \left( \frac{r_1}{2C_1}\right) ^{-2\alpha}\int_{\mathbb A \left(\frac{r_1}{2},r_1\right)  \times (-T_1,T_1)}  U^2y^a +C r_2^{-2\alpha} e^{2\alpha r_2^\epsilon}  \int_{\mathbb A \left(r_2, 2r_2\right) \times (-T_1,T_1)}  U^2y^a\notag\\
		&\;\;+\frac{\A^3}{2}\int_{\mathbb A (r_1,r_2) \times (-T_1,T_1)}  r^{-2\alpha-4 +\epsilon} e^{2\alpha r^\epsilon}   \W^2 y^a+  C \int_{\mathbb B_{R_0} \times (-T,T)} U^2y^a.\notag
	\end{align}
	We first observe that the term $\frac{\A^3}{2}\int_{\mathbb A (r_1,r_2) \times (-T_1,T_1)}  r^{-2\alpha-4 +\epsilon} e^{2\alpha r^\epsilon}   \W^2 y^a$ can be absorbed in the left hand side of \eqref{et02}. 	Over here we also remark that since $\tilde{C}$ in  \eqref{r12}, \eqref{r23} are universal, therefore it has been replaced by $C$ in \eqref{et02} above.
	Furthermore, since $r_1/2 <R_0,$ and $C_1=e^{R_0^{\epsilon}}$ we have
	\begin{align}\label{v1}
		 \left(\frac{r_1}{2}\right)^{-2\alpha-4} e^{\frac{2\alpha r_1^\epsilon}{2^{\epsilon}}} \le \left(\frac{r_1}{2C_1}\right)^{-2\alpha-4}.
	 \end{align} 
 Also we note \begin{align}\label{v2}
 	\mathbb A \left(\frac{r_1}{2},r_1\right) \subset \mathbb A \left(\frac{r_1}{4},\frac{3r_1}{2}\right)\;\;\; \text{and}\;\;\;\mathbb A (r_2,2r_2) \subset \mathbb A (r_2/2,4r_2) \subset \mathbb B_{R_0}.
 \end{align}
 Therefore using \eqref{v1} and \eqref{v2} in\eqref{et02}, we obtain
	\begin{align}\label{et4}
		&\frac{\A^3}{2}\int_{\mathbb A (r_1,r_2) \times (-T_1,T_1)}  r^{-2\alpha-4 +\epsilon} e^{2\alpha r^\epsilon}   \W^2 y^a\\
		&\le C(\A^2+1) \left(\frac{r_1}{2C_1}\right)^{-2\alpha-4} \int_{  \mathbb A \left(\frac{r_1}{4},\frac{3r_1}{2}\right) \times (-T, T)  } U^2 y^a +C(\A^2+2) r_2^{-2\alpha-4} e^{2\alpha r_2^\epsilon} \int_{  \mathbb B_{R_0} \times (-T, T)  }  U^2 y^a\notag
	\end{align}
	
	Using \eqref{monotone},  the fact that $\eta\equiv 1$ and $\phi \equiv 1$ in $\mathbb A(r_1,r_2) \times (-T_2,T_2),$  we find that the integral in the left-hand side of \eqref{et4} can be bounded from below in the following way
	\begin{equation}\label{b5}
		\frac{\alpha^3}{2} \int r^{-2\alpha- 4+\epsilon} e^{2\alpha r^\epsilon} \W^2y^a \geq   \frac{\alpha^3}{2} r_2^{-2\alpha - 4+\epsilon}  e^{2\alpha r_2^\epsilon} \int_{\mathbb A(r_1,r_2) \times (-T_2,T_2)} U^2y^a. 
	\end{equation}
	Subsequently using the bound \eqref{b5} in \eqref{et4},and then  by dividing both sides  of the resulting inequality in \eqref{et4}  by $\A^2 
	r_2^{-2\alpha - 4+ \epsilon} e^{2\alpha r_2^\epsilon}$, we obtain
	\begin{align}\label{e3}
		\frac{\alpha}{2} \int_{\mathbb A(r_1,r_2) \times (-T_2,T_2)} U^2y^a \leq C \left(\frac{r_1}{2C_1r_2}\right)^{-2\alpha -4}r_2^{-\epsilon}  \int_{ \mathbb A\left(\frac{r_1}{4}, \frac{3r_1}{2}\right) \times (-T, T)  }  U^2y^a + Cr_2^{-\epsilon}  \int_{ \mathbb B_{R_0} \times (-T, T)  } U^2y^a.
	\end{align}
	 Since $r_2(=R_0/4)$ and $\epsilon$ are  universal constants, we can replace $Cr_2^{-\epsilon}$ by a new universal $C.$  We  now add $\frac{\alpha}{2} \int_{ \mathbb B_{r_1} \times (-T_2,T_2)} U^2y^a$ to both sides of the inequality \eqref{e3} to deduce the following
	\begin{align}\label{et5}
		\frac{\alpha}{2} \int_{\mathbb B_{r_2} \times (-T_2, T_2)} U^2y^a
		& \leq   C \left(\frac{r_1}{2C_1r_2}\right)^{-2\alpha -4}  \int_{  \mathbb A\left(\frac{r_1}{4}, \frac{3r_1}{2}\right)  \times (-T, T)}  U^2y^a
		+ \frac{\alpha}{2} \int_{\mathbb B_{r_1} \times (-T_2, T_2)} U^2y^a \\
		&\;\;\;\;+ C   \int_{ \mathbb B_{R_0} \times (-T, T)  } U^2y^a\notag\\
		& \le 2 C \left(\frac{r_1}{2C_1r_2}\right)^{-2\alpha - 4}   \int_{ \mathbb B_{3r_1/2}  \times (-T, T)  }  U^2y^a
		+C  \int_{ \mathbb B_{R_0} \times (-T, T)  } U^2y^a, \notag
	\end{align}
	where in the second inequality in \eqref{et5}, we have used, 
	\begin{equation}\label{lkjh}
	\frac{\alpha}{2} \leq \left(\frac{r_1}{2C_1r_2}\right)^{-2\alpha - 4}.
	\end{equation}
	\eqref{lkjh} can be seen as follows. Since $2r_1 <r_2$ and $C_1>1$, therefore $$\left(\frac{2C_1 r_2}{r_1}\right)^{2\A+4} \ge (4C_1)^{2\A+4} \ge \frac{\A}{2}.$$
	Using  \eqref{assume}, we now choose $\alpha$ (depends also on $U$) such that
	\begin{equation}\label{choice2}
		\frac{\alpha}{2} \int_{ \mathbb B_{r_2} \times (-T_2,T_2)} U^2 y^a  \geq  1+   C  \int_{ \mathbb B_{R_0} \times (-T, T)  } U^2y^a. 
	\end{equation}
Using \eqref{choice2} in \eqref{et5}, we get
\begin{align}\label{o1}
	 1+   C  \int_{ \mathbb B_{R_0} \times (-T, T)  } U^2y^a \leq 2 C \left(\frac{r_1}{2C_1r_2}\right)^{-2\alpha - 4}   \int_{ \mathbb B_{3r_1/2}  \times (-T, T)  }  U^2y^a
	 +C  \int_{ \mathbb B_{R_0} \times (-T, T)  } U^2y^a.
\end{align}
Now  \eqref{o1} can be  equivalently  written as 
	\begin{align}\label{et7}
		\left(\frac{r_1}{C_1 r_2}\right)^{2 \alpha + 4}   \leq 2 C \int_{\mathbb B_{3r_1/2} \times (-T, T) } U^2y^a.
	\end{align}
	Let $\rho=3r_1/2,$ then \eqref{et7} becomes 
	\begin{align}\label{et8}
		\rho^{2\A+4}   \leq 2 C \left(\frac{3 C_1 r_2}{2}\right)^{2 \alpha + 4}      \int_{\mathbb B_{\rho} \times (-T, T) } U^2y^a.
	\end{align}
We now multiply both sides of the inequality  \eqref{et8} by $\rho^{2\A+4}$ and noting that since $\rho <R_0/8$ and $C_1=e^{R_0^{\epsilon}}<e$, it follows that \begin{equation}\label{plkj}\frac{3C_1 r_2 \rho}{2} \le \frac{3e R_0^2}{64} \le 1. \end{equation}  Using \eqref{plkj} to the resulting inequality  in \eqref{et8} we obtain
	\begin{align}\label{et9}
		\rho^{4\A+8}     \leq 2 C\left(\frac{3 C_1 r_2\rho}{2}\right)^{2 \alpha + 4} \int_{\mathbb B_{\rho} \times (-T, T) } U^2y^a  \leq 2 C \int_{\mathbb B_{\rho} \times (-T, T) } U^2y^a.
	\end{align}  
	Now by letting
	\begin{align}\label{alp}
		\alpha= C(n,s)(||V||_{C^1}^{1/2s}+1) +2C\frac{1+\int_{ \mathbb B_{R_0} \times (-T, T)  } U^2y^a}{\int_{ \mathbb B_{r_2} \times (-T_2,T_2)} U^2 y^a}+\overline{C},
	\end{align}
	 which guarantees that \eqref{choice2} holds, we can now assert that there  exists a universal constant $ C$ such that the following holds for all $0< \rho \leq R_0/8$,
	\[
	\int_{\mathbb B_{\rho} \times (-T, T) } U^2 y^a\geq  C \rho^{A}.
	\]
	where $A= C ||V||_{C^1}^{1/2s} + C \frac{1+\int_{ \mathbb B_{R_0} \times (-T, T)  } U^2y^a}{\int_{ \mathbb B_{R_0/4} \times (-T_2,T_2)} U^2 y^a}+  C.$ This completes the proof of Lemma \ref{bulkdf}.
\end{proof}

\subsection{Carleman estimate II} Our next Carleman estimate is a parabolic generalization of the estimate in \cite[Proposition 5.7]{RS}.  As the reader will find, this estimate allows passage of quantitative uniqueness from the bulk to the boundary.  This is precisely where we require that $s \in [1/2, 1)$ similar to that in \cite{RS}.
\begin{lemma}\label{carlbdr}
	Let $\W$ be a  solution to
	\begin{align*}
		\D(y^a \n \W) + y^a \W_t&=f \hspace{2mm}\text{in} \hspace{2mm} \{y>0\}\\
		\W&=0\hspace{2mm} \text{on}\hspace{2mm} \{y=0\},
	\end{align*}
	with supp$(\W) \subset  \overline{ \mathbb B_{1/2}}\times (0,1).$ Assume $||y^{-a/2}f||_{L^2} < \infty$ and also assume that $\nabla_x \W, \W_t$ are continuous up to the thin set $\{y=0\}.$ Then  there exist a universal $C=C(n,s)$ such that for any $\A >2,$ we have 
	\begin{align}\label{carlbdrest}
		\A^3 \int e^{2\A \phi}\W^2y^a \le C \int e^{2\A \phi}f^2y^a +C \A \int_{\{y=0\}} (\py \W)^2.
	\end{align}
\end{lemma}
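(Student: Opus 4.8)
The plan is to adapt the conjugation-and-positive-commutator proof of the elliptic estimate \cite[Proposition 5.7]{RS}, inserting the parabolic term $y^a\W_t$ and verifying that it is harmless. Write $\Delta_a:=y^{-a}\D(y^a\n\,\cdot\,)$, divide the equation by $y^a$ to get $\Delta_a\W+\W_t=y^{-a}f$, and set $v:=e^{\A\phi}\W$, where $\phi$ is the (time‑independent) Carleman weight of \cite{RS}; since $\W$ is supported in $\overline{\mathbb B_{1/2}}\times(0,1)$ and vanishes on $\{y=0\}$, so does $v$. A direct computation gives $e^{\A\phi}\big(\Delta_a(e^{-\A\phi}v)+\partial_t(e^{-\A\phi}v)\big)=\mathcal Sv+\mathcal Av$, where, with respect to the measure $d\mu:=y^a\,dX\,dt$, the self‑adjoint part is $\mathcal Sv=\Delta_av+\A^2|\n\phi|^2v$ and the skew‑adjoint part is $\mathcal Av=-2\A\n\phi\cdot\n v-\A(\Delta_a\phi)v+\partial_tv$ (here one uses that $\phi$ is $t$‑independent, so that $\partial_t$ is skew‑adjoint and joins $\mathcal A$). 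Since $\mathcal Sv+\mathcal Av=e^{\A\phi}y^{-a}f$, squaring and integrating against $d\mu$ yields
\[
\int e^{2\A\phi}f^2\,y^{-a}\,dX\,dt=\int(\mathcal Sv)^2\,d\mu+\int(\mathcal Av)^2\,d\mu+\int v\,[\mathcal S,\mathcal A]v\,d\mu+\mathcal B,
\]
where $\mathcal B$ collects the boundary terms on $\{y=0\}$ produced by the integrations by parts in $X$ (those in $t$ produce nothing, as $\W$ is compactly supported in $(0,1)$).

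Next I would compute $[\mathcal S,\mathcal A]$. Its purely spatial part is exactly the operator occurring in \cite{RS}, and by the pseudoconvexity of $\phi$ one has, on the support of $v$,
\[
\int v\,[\mathcal S_{\mathrm{sp}},\mathcal A_{\mathrm{sp}}]v\,d\mu\ \ge\ c\,\A^3\int v^2\,d\mu+c\,\A\int|\n v|^2\,d\mu-(\text{terms on }\{y=0\}),
\]
the sign of the weighted contribution on $\{y=0\}$ being favorable precisely when $a\le0$, i.e. $s=\tfrac{1-a}{2}\ge\tfrac12$; this is the only place where $s\ge1/2$ enters, exactly as in \cite{RS}. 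The commutator of the $t$‑independent operators $\Delta_a+\A^2|\n\phi|^2$ with $\partial_t$ vanishes identically, so the only new parabolic contributions to the cross term are $2\int(\Delta_av)\,\partial_tv\,d\mu$ and $2\A^2\int|\n\phi|^2v\,\partial_tv\,d\mu$; the latter equals $\A^2\int|\n\phi|^2\partial_t(v^2)\,d\mu$ and vanishes upon integrating in $t$, while the former, after one integration by parts in $X$, becomes $-\int y^a\partial_t(|\n v|^2)\,dX\,dt$ plus a $\{y=0\}$ term, and its interior part again vanishes by the fundamental theorem of calculus in $t$. Thus the parabolic term contributes nothing in the interior, only additional boundary terms on $\{y=0\}$.

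It then remains to handle $\mathcal B$ together with all the $\{y=0\}$ terms above. Since $v\equiv0$ on the thin set and, by hypothesis, $\n_x\W$ and $\W_t$ (hence $\n_xv$ and $v_t$) are continuous up to it and therefore also vanish there, every boundary term containing $v$, $\n_xv$, or $v_t$ vanishes, and the only survivors are multiples of $(\py v)^2$. Because $\W$ vanishes on $\{y=0\}$ its Dirichlet branch is absent, so $\lim_{y\to0^+}y^a\W=0$ and hence $\py v=e^{\A\phi(\cdot,0,\cdot)}\py\W$; as $\phi$ is bounded on $\overline{\mathbb B_{1/2}}\times(0,1)$, these surviving terms are controlled in absolute value by $C\A\int_{\{y=0\}}(\py\W)^2$. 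Collecting everything, discarding the nonnegative terms $\int(\mathcal Sv)^2\,d\mu$ and part of $c\A\int|\n v|^2\,d\mu$ (after using a fraction of the latter to absorb the lower‑order‑in‑$\A$ errors from the commutator), we obtain, for $\A$ above a universal threshold,
\[
\A^3\int v^2\,d\mu\ \le\ C\int e^{2\A\phi}f^2\,y^{-a}\,dX\,dt+C\A\int_{\{y=0\}}(\py\W)^2 .
\]
Rewriting $v=e^{\A\phi}\W$ and noting that $y^{-a}\le y^{a}$ on $\{0<y<1\}$ because $a\le0$, this is exactly \eqref{carlbdrest}; the threshold may be taken to be $\A>2$ after a suitable normalization of $\phi$. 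The estimate is first proved for $\W\in C_0^\infty(\{y>0\})$ vanishing on $\{y=0\}$ and then extended to the stated class by approximation, the continuity hypotheses on $\n_x\W,\W_t$ being precisely what makes the limiting boundary integrals meaningful. I expect the main obstacle to be the bookkeeping of the boundary terms on the degenerate hyperplane $\{y=0\}$ — ensuring that each one either vanishes because $v$ does, or is of the admissible form $\A\int_{\{y=0\}}(\py\W)^2$ — together with checking that none of the new parabolic cross terms destroys the positivity of the commutator inherited from \cite{RS}.
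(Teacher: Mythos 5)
Your self-adjoint/skew-adjoint decomposition is a legitimate reframing of the $(A+B)^2\ge A^2+2AB$ inequality used in the paper, and your observation that $\partial_t$ joins the skew-adjoint part (because $\phi$ is $t$-independent) and that the parabolic cross terms $2\int(\Delta_a v)\partial_t v\,d\mu$ and $2\A^2\int|\n\phi|^2v\,\partial_tv\,d\mu$ vanish after integration by parts in $t$ is correct and matches the paper's verification that $I_7=I_8=0$. However, there are genuine gaps in the rest of the argument.

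First, and most importantly, you conjugate only by $e^{\A\phi}$, whereas both the paper and \cite{RS} conjugate by $e^{\A\phi}y^{a/2}$. This is not a cosmetic choice: with the extra $y^{a/2}$, the degenerate operator $\Delta_a=\Delta+\tfrac{a}{y}\partial_y$ becomes $\Delta+c_sy^{-2}$ with $c_s=\tfrac{1-4s^2}{4}$, a Laplacian plus an inverse-square potential, and the spatial commutator becomes the standard one with a zeroth-order singular correction. Without this conjugation, your $[\mathcal S,\mathcal A]$ contains first-order singular contributions coming from $[\tfrac{a}{y}\partial_y,\phi_y\partial_y]=\tfrac{a}{y^2}\phi_y\partial_y+\tfrac{a}{y}\phi_{yy}\partial_y$ and from $\phi_{yyy}\sim y^{a-2}$; after integration against $v\,d\mu$ these produce interior integrals with weights like $y^{2a-3}v^2$, which are borderline (indeed formally divergent for the natural vanishing rate $\W\sim y^{1-a}$) and do not cancel. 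So the assertion that the spatial part of the commutator is ``exactly the operator occurring in \cite{RS}'' is false, and the positivity step is not actually inherited.

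Second, the Hardy inequality, which is central to the paper's proof, does not appear in your proposal. After the $y^{a/2}$-conjugation the paper must control a term $\A\gamma(1+2s)^2 a\int y^{a-3}W^2$; the coefficient $\A\gamma(1+2s)^2 a\le0$ exactly when $a\le 0$, and the estimate \eqref{hrdy} is then used to convert this into a favorable $\int y^{a-1}W_y^2$ term plus a boundary term of the admissible form. Your claim that $s\ge\tfrac12$ enters ``only'' through the sign of a boundary term is therefore incomplete: it also enters through this Hardy-coefficient sign. In your unconjugated framework you would face the analogous singular-interior-term issue but have no mechanism to handle it.

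Third, the boundary bookkeeping is more delicate than you describe. The crucial device in the paper is the limit relation \eqref{ftc}, $\int_{\{y=\ve\}}e^{2\A\phi}y^{2(a-1)}\W^2\to C\int_{\{y=0\}}e^{2\A\phi}(\py\W)^2$, which is what allows every surviving boundary term to be recognized as a multiple of $\int_{\{y=0\}}(\py\W)^2$ after the $\ve\to0$ limit; you do not establish any such statement. Finally, the closing ``prove for $\W\in C^\infty_0(\{y>0\})$ and approximate'' step is problematic: for such $\W$ one has $\py\W\equiv 0$, so you would be proving a strictly stronger estimate for a dense class and would then have to argue why the boundary term reappears under approximation; the paper instead works throughout on $\{y>\ve\}$ and tracks the $\ve\to0$ limit of every boundary integral.
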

\begin{proof}
	Similar to that in \cite{RS}, we let  $$\phi(x,y)=-\frac{|x|^2}{4} + \gamma \left[-\frac{y^{a+1}}{a+1}+\frac{y^2}{2}\right].$$
	We then set $W=e^{\A \phi}y^{a/2}\W$. Thus we have  $\W=e^{-\A \phi}y^{-a/2}W$. Now by direct calculation,  we find
	\begin{align*}
		e^{\A \phi}y^{-a/2}\D(y^a \n \W) +	e^{\A \phi}y^{a/2}\W_t &=\Delta W + \A^2 (|x|^2/4 +\gamma^2 (y^{2a}+y^2-2y^{a+1}))W -\A (-n/2-\gamma a y^{a-1}+ \gamma)W\\
		&\;\;\;+ \A \langle \n_x W, x \rangle -2 \A \gamma(y- y^a)W_y+ c_sWy^{-2}+ W_t,
	\end{align*}
	where $$ c_s:=-\frac{a^2}{2} +\frac{a}{2} + \frac{a^2}{4}=-\frac{a^2}{4} +\frac{a}{2}=\frac{1-4s^2}{4}.$$
	Before proceeding further, we mention that in the ensuing computations below, all  the integrals will be on the set  $\{y>\epsilon\}$ but the domain of the integration will not be specified for the simplicity of exposition.  In the end,  we will take the limit $\epsilon \rightarrow 0$.

	We  begin by applying the algebraic inequality $(A+B)^2 \ge A^2 +2 A B$ to
	\begin{align*}
		A&=\Delta W + \A^2 (|x|^2/4 +\gamma^2 (y^a-y)^2)W + c_s Wy^{-2},\\
		B&=-\A (-n/2-\gamma a y^{a-1}+ \gamma)W	+ \A \langle \n_xW, x \rangle -2 \A \gamma(y- y^a)W_y + W_t,
	\end{align*}
	and consequently obtain
	\begin{align}\label{bb1}
		\int e^{2\A\phi}f^2 y^{-a} \ge &\int (\Delta W + \A^2 (|x|^2/4 +\gamma^2 (y^a-y)^2)W + c_s Wy^{-2}
		)^2 -2\A \int  (-n/2-\gamma a y^{a-1}+ \gamma)W\Delta W\\
		&+2\A\int \langle \n_xW, x \rangle \Delta W -4 \A \gamma\int (y- y^a)W_y\Delta W  \notag \\
		&+2  \int (\A^3 |x|^2/4 +\A^3\gamma^2 (y^a-y)^2+ \A c_s y^{-2})\langle \n_x W, x \rangle W \notag \\
		&-4\A\gamma \int  (\A^2|x|^2/4 +\A^2\gamma^2 (y^a-y)^2 + c_s y^{-2} )(y-y^a)WW_y \notag \\
		& + 2\int W_t \Delta W + 2  \int (\A^2|x|^2/4 +\A^2\gamma^2 (y^a-y)^2+c_sy^{-2})WW_t \notag\\
		& -2 \int (\A^3|x|^2/4 +\A^3\gamma^2 (y^a-y)^2+\A c_s y^{-2} )(-n/2-\gamma a y^{a-1}+ \gamma)W^2  \notag\\
		&=I_1+I_2+I_3+I_4+I_5+I_6+I_7+I_8+I_9.\notag
	\end{align}
	We estimate each of the  integrals above separately. First  we note that by using an argument as in Step 3 in the proof of Proposition 5.7 in \cite{RS}, we  have	\begin{align}\label{ftc}
		\int_{\{y=\epsilon\}}e^{2\A \phi}y^{2(a-1)}\W^2 \le C\int_{\{y=0\}}e^{2\A \phi}(\py \W)^2\ \text{as $\epsilon \to 0$}. 
	\end{align}
	The inequality \eqref{ftc} will be crucially used in the estimate for $I_j$'s  below.
	Using integration by parts and also using  the fact that $\text{supp}(\W) \subset \overline{\mathbb B_{1/2}} \times(0,1)$, we get
	\begin{align*}
	 I_2=	-2\A \int (-n/2-\gamma a y^{a-1}+ \gamma)W \Delta W &=  2\A \int (\gamma -\gamma a y^{a-1} -n/2) |\n W|^2 -2\A \int \gamma a (a-1) y^{a-2}WW_y\\
		&\;\;+2 \A \int_{\{y=\epsilon\}}(-n/2-\gamma a y^{a-1}+ \gamma)WW_y
	\end{align*}
	Another application of integration by parts gives
	\begin{align}\label{thr}
		&-2\A \int \gamma a (a-1) y^{a-2}WW_y=-\A \int \gamma a (a-1) y^{a-2}\partial_y W^2\\
		&=\A \int \gamma a (a-1)(a-2) y^{a-3} W^2 + \A \int_{\{y=\epsilon\}} \gamma a (a-1) y^{a-2}W^2\notag\\
		& \geq \A \int \gamma a (a-1)(a-2) y^{a-3} W^2.\notag	\end{align}
		In the last inequality in \eqref{thr} above, we used that  since we are  in the $s \geq \frac{1}{2}$, therefore we have that $a(a-1) \geq 0$ which in particular implies that
		\[
		\A \int_{\{y=\epsilon\}} \gamma a (a-1) y^{a-2}W^2\ge 0.
		\]		
		
		 Now on substituting $W=e^{\A\phi}y^{a/2}\W$, we find that
	$W_y= e^{\A \phi}(y^{a/2}\W_y +\frac{a}{2}y^{a/2-1}\W+ \A(-\gamma y^a + \gamma y)y^{a/2}\W)$ and thus we get  for small enough $\epsilon>0$
\begin{align}\label{htu}
		&2 \A \int_{\{y=\epsilon\}}(-n/2-\gamma a y^{a-1}+ \gamma)WW_y\\
		&=2 \A \int_{\{y=\epsilon\}}(-n/2-\gamma a y^{a-1}+ \gamma)e^{2\A \phi}(y^{a}\W_y +\frac{a}{2}y^{a-1}\W+ \A(-\gamma y^a + \gamma y)y^{a}\W)\W\notag\\
		&\geq - 4\A\gamma\int_{\{y=\epsilon\}} e^{2\alpha \phi}|y^{a-1}\W||y^a\W_y| - 2 \A \gamma \int_{\{y=\epsilon\}}e^{2\A \phi} y^{2(a-1)}\W^2 - 4\A^2\gamma^2 \int_{\{y=\epsilon\}} e^{2\A \phi}y^{2(a-1)}y^{1+a}\W^2\notag\\
		& \geq- 6\A\gamma C\int_{\{y=0\}} e^{2\alpha \phi} (\py \W)^2\ \text{as $\epsilon \to 0$}.\notag 
	\end{align}
	In the first inequality  in \eqref{htu} above, we used that $a- 1<0$ and thus for small enough $y$, we have $y^{a-1}>>1$.
	In the second inequality, we used \eqref{ftc} to estimate the second and the third term on the right hand side of the first inequality in \eqref{htu}  in the following way
	\begin{equation}
	\begin{cases}
	2 \A \gamma \int_{\{y=\epsilon\}}e^{2\A \phi} y^{2(a-1)}\W^2 \leq 2 \alpha \gamma C \int_{\{y=0\}} e^{2\alpha \phi} (\py \W)^2, \text{as $\epsilon \to 0$},
	\\
	4\A^2\gamma^2 \int_{\{y=\epsilon\}} e^{2\A \phi}y^{2(a-1)}y^{1+a}\W^2 \to 0\ \text{as $\epsilon \to 0$}.	
	\end{cases}
	\end{equation}
	We thus obtain as $\epsilon \to 0$
	\begin{align}\label{i2}
	I_2=	-2\A \int (-n/2-\gamma a y^{a-1}+ \gamma)W \Delta W&\ge 2\A \int (\gamma -\gamma a y^{a-1} -n/2) |\n W|^2 +\A \int \gamma a (a-1)(a-2) y^{a-3} W^2\\
		&\;\;\;- 6C\A\gamma\int_{\{y=0\}} e^{2\alpha \phi} (\py \W)^2.\notag
	\end{align}
	Now to estimate $I_3$, we  first apply Rellich identity (Lemma \ref{Rellich}) for $a=0$ and $k=0$ to get
	\begin{align}\label{ter1}
		2 \A\int \Delta_x W \langle \n_x W,x \rangle
		=(n-2)\A \int |\n_x W|^2.
	\end{align}
	Furthermore, by integrating by parts we observe
	\begin{align}\label{ter2}
		2 \A \int W_{yy}\langle \n_x W,x \rangle&=-2\A \int W_y \langle  \n_x W_y,x\rangle -2 \A \int_{\{y=\epsilon\}}W_y \langle  \n_x W,x\rangle \\
		&=-\A \int \langle  \n_x W_y^2,x\rangle -2 \A \int_{\{y=0\}}W_y \langle  \n_x W,x\rangle\notag\\
		&=n\A \int W_y^2 -2 \A \int_{\{y=\epsilon\}}W_y \langle  \n_x W,x\rangle.\notag
	\end{align}
	Hence from \eqref{ter1} and \eqref{ter2} we have
	\begin{align}\label{ter4}
	I_3=	2 \A\int \Delta W \langle \n_x W,x \rangle=(n-2)\A \int |\n_x W|^2+n\A \int W_y^2 -2 \A \int_{\{y=0\}}W_y \langle  \n_x W,x\rangle.
	\end{align}
In order to the estimate the boundary integral in \eqref{ter4} above, we substitute  $W$ in terms of  $\W$ and then by computations  as in \eqref{htu} above, we get
	\begin{align}\label{ter5}
	&-2\A \int_{\{y=\epsilon\}}W_y \langle x, \n_x W \rangle \\
	&=-2\A \int_{\{y=\epsilon\}}e^{2\A \phi}(y^{a}\W_y +\frac{a}{2}y^{a-1}\W+ \A(-\gamma y^a + \gamma y)y^{a}\W)(\langle x, \n_x \W\rangle -\frac{\A}{2} |x|^2\W )\notag\\
	& =- 2 \A \int_{\{y=\epsilon\}}e^{2\A \phi}y^{a}\W_y\langle x, \n_x \W\rangle - \A a \int_{\{y=\epsilon\}}e^{2\A \phi}y^{a-1}\W\langle x, \n_x \W\rangle +2\A^2 \gamma \int_{\{y=\epsilon\}}e^{2\A \phi}y^{1+a}y^{a-1}\W\langle x, \n_x \W\rangle\notag\\
	&+\A^2 \int_{\{y=\epsilon\}}e^{2\A \phi}|x|^2y^{1-a}(y^{a-1}\W)(y^a \W_y) +\frac{\A^2a}{2} \int_{\{y=\epsilon\}}e^{2\A \phi}y^{1-a} |x|^2(y^{a-1}\W)^2 -\alpha^3 \gamma \int_{\{y=\epsilon\}}e^{2\A \phi}y^2(y^{a-1}\W)^2 \notag\\
	&- 2\alpha^2 \gamma \int_{\{y=\epsilon\}} e^{2\alpha \phi} y^{2}y^{a-1} \W <x, \nabla_x \W> +\alpha^3 \gamma \int_{\{y=\epsilon\}} y^{1+a} |x|^2 \W^2.\notag
	\end{align}
	Now  by an application of Cauchy-Schwartz, \eqref{ftc} and the fact that $\W, <\nabla_x \W, x> \to 0$ as $\epsilon \to 0$( recall $\W \equiv 0$ at $\{y=0\}$), we observe that  all the integrals in \eqref{ter5} go  to $0$ as $ \epsilon \to 0$. We thus conclude that as $\epsilon \to 0,$
\begin{align}\label{i3}
I_3 \ge 	2 \A\int \Delta W \langle \n_x W,x \rangle=(n-2)\A \int |\n_x W|^2+n\A \int W_y^2.
\end{align}
We now estimate $I_4$. We have 	\begin{align*}
	I_4=	-4 \A \gamma \int \Delta W (y-y^a)W_y&=4 \A \gamma \int \langle \n_x W,(y-y^a)\n_x W_y \rangle - 4 \A \gamma \int W_{yy} W_y(y-y^a)\\
		&= 2 \A \gamma \int (y-y^a)\partial_y |\n_x W|^2  - 2 \A \gamma \int (W_y^2)_y(y-y^a)\\
		&\ge-2\A \gamma \int (1-ay^{a-1})|\n_x W|^2
		+2\A \gamma \int (1-ay^{a-1})W_y^2 \\ &\;\;+2 \A\gamma \int_{\{y=\epsilon\}}(y-y^a)W_y^2.
	\end{align*}
We now estimate the boundary integral in the above expression. We find
\begin{align*}
	\left|2\A \gamma \int_{\{y=\epsilon\}}(y- y^a) W_y^2 \right| &\le  2\A \gamma \int_{\{y=\epsilon\}}(y^a)e^{2\A \phi}(y^{a/2}\W_y +\frac{a}{2}y^{a/2-1}\W+ \A(-\gamma y^a + \gamma y)y^{a/2}\W)^2\\
	&\le 4\A\gamma \int_{\{y=\epsilon\}} e^{2\A \phi} [(y^a\W_y)^2+ (y^{a-1}\W)^2 +\A y^{2a+2}(y^{a-1}\W)^2]\\
	&\le 8C\A\gamma \int_{\{y=0\}} e^{2\A \phi} (\py \W)^2\ \text{as $\epsilon \to 0$.}
\end{align*}
Note that in the last inequality, we again used \eqref{ftc}. 
Hence we get as $\epsilon \to 0,$
 \begin{align}\label{i4}
	I_4 \ge -2\A \gamma \int (1-ay^{a-1})|\n_x W|^2
	+2\A \gamma \int (1-ay^{a-1})W_y^2 -8C\A\gamma \int_{\{y=0\}} e^{2\A \phi} (\py \W)^2.
\end{align}
To estimate $I_5$, we  first write $W\n_x W =\frac{1}{2} \n_x W^2$ and then  integrate by parts with respect to $x$-variable to get
	\begin{align}\label{i5}
	I_5=	2& 	\int  (\A^3|x|^2/4 +\A^3\gamma^2 (y^a-y)^2+\A c_s y^{-2})W\langle \n_x W,x \rangle\\
		&= -\frac{\A^3}{4} \int (n+2)|x|^2W - n \A^3 \gamma^2 \int (y^a-y)^2W^2 -n \A c_s \int  W^2 y^{-2}.\notag
	\end{align}
	Similarly, by writing $ 2 W W_y = \partial_y W^2$ and  then by integrating by parts in the $y$-variable, we have
	\begin{align*}
	&I_6=	-4\A\gamma \int  (\A^2|x|^2/4 +\A^2\gamma^2 (y^a-y)^2 + c_s y^{-2} )(y-y^a)WW_y\\
		&=\frac{\A^3}{2} \gamma \int  (1-ay^{a-1})|x|^2W^2
		+6 \A^3\gamma^3 \int (y-y^a)^2(1-ay^{a-1})W^2\\
		&+2\A^3 \gamma \int_{\{y=\epsilon\}}  (|x|^2/4 +\gamma^2 (y-y^a)^2)(y-y^a)W^2\\
		&-2 \A \gamma c_s \int (y^{-2}+(a-2)y^{a-3})W^2 + 2 \A \gamma c_s \int_{\{y=\epsilon\}} (y-y^a)y^{-2}W^2.	\end{align*}
Again the first boundary integral in the expression of $I_6$ above can be estimated by writing $W$ in terms of $\W$ and by using \eqref{ftc} in the following way\begin{align*}
	2\A^3 \gamma \int_{\{y=\epsilon\}}  (|x|^2/4 +\gamma^2 (y-y^a)^2)(y-y^a)W^2 &\le C \A^3 \gamma^3 \int_{\{y=\epsilon\}} e^{2 \A \phi }y^{4a}\W^2\\
	& = C\A^3 \gamma^3 \int_{\{y=\epsilon\}} e^{2 \A \phi }y^{2a+2}(y^{a-1}\W)^2 \\
	& \rightarrow 0\ \text{as $\epsilon \to 0$ using \eqref{ftc}}.
\end{align*} 
Likewise using \eqref{ftc}, the second boundary integral can be estimated as 

\begin{align}
&\left| 2 \A \gamma c_s \int_{\{y=\epsilon\}} (y-y^a)y^{-2}W^2 \right| \leq C \alpha \gamma c_s \int_{\{y=0\}} e^{2\alpha \phi} (\py \W)^2\ \text{as $\epsilon \to 0$.}
\end{align}
Thus as $\epsilon \to 0$, we have \begin{align}\label{i6}
	&I_6\ge\frac{\A^3}{2} \gamma \int  (1-ay^{a-1})|x|^2W^2
+6 \A^3\gamma^3 \int (y-y^a)^2(1-ay^{a-1})W^2\\
& -2 \A \gamma c_s \int (y^{-2}+(a-2)y^{a-3})W^2 -C \alpha \gamma c_s \int_{\{y=0\}} e^{2\alpha \phi} (\py \W)^2.	\notag\end{align}
	
	Now for $I_7$,  using \eqref{Rellich2} in Lemma \ref{Rellich} with $k=a=0$, we find
	\begin{align*}
	 I_7=	2\int W_t\Delta W=- 2\int_{\{y=\epsilon\}}W_tW_y.
	\end{align*}
As before, the boundary integral is handled by writing $W$ in terms of $\W$, using $\W_t \to 0$ as $y \to 0$  and also by using \eqref{ftc} in the following way
\begin{align*}
	- 2\int_{\{y=\epsilon\}}W_tW_y&=-2 \int_{\{y=\epsilon\}} e^{2\alpha \phi}y^a \W_t \W_y -2\A \int_{\{y=\epsilon\}} e^{2\alpha \phi} (-\gamma y^a +\gamma y)y^a\W \W_t -a \int_{\{y=\epsilon\}}  e^{2\alpha \phi}y^{a-1}\W \W_t\\
	&\le C \left( \int_{\{y=\epsilon\}} e^{2\alpha \phi}(\py \W)^2 \int_{\{y=\epsilon\}} e^{2\alpha \phi} \W_t^2 \right)^{1/2} \\ &+ C\A \left(\int_{\{y=\epsilon\}} e^{2\alpha \phi} (y^{a-1}\W)^2 \int_{\{y=\epsilon\}} e^{2\alpha \phi} (y^{1+a}\W_t)^2 \right)^{1/2}\\
	& + C\left(\int_{\{y=\epsilon\}} e^{2\alpha \phi}(y^{a-1}\W)^2 \int_{\{y=\epsilon\}} e^{2\alpha \phi} \W_t^2  \right)^{1/2}\rightarrow 0\\& \text{as $\epsilon \to 0$ using \eqref{ftc} and also  that  $\W_t \to 0$ as $y \to 0$.}
\end{align*}
Hence in the limit $\epsilon \to 0$, we have \begin{align}\label{i7}
	I_7=0.
\end{align}
	Also by integrating by parts in the $t$ variable, we note	\begin{align}\label{i8}
		I_8=2\int(\A^2|x|^2/4 +\A^2\gamma^2 (y^a-y)^2+ c_s y^{-2}) W_t W= c_s\int  \partial_t ( y^{-2}W^2)=0.
	\end{align}
	
	Therefore using \eqref{bb1}-\eqref{i8}  and by rearranging the terms we have
	\begin{align}\label{kj1}
	&\int e^{2\A\phi}y^{-a}f^2
		\ge I_1 -2 \A \int |\n_x W|^2 + 4 \A \gamma \int (1-ay^{a-1})W_y^2\\
		&+(\A\gamma a (a-1)(a-2) - 2 \A \gamma c_s (a-2)+2\A \gamma a c_s)\int y^{a-3} W^2\notag\\
		&+\left(-(n+2)\frac{\A^3}{4}+n\frac{\A^3}{4}\right)\int |x|^2 W^2+(-n\A^3 \gamma^2+ n \A^3 \gamma^2)\int (y-y^a)^2W^2\notag\\
		&+(\A^3 \gamma/2 - \A^3 \gamma/2)\int (1-ay^{a-1})|x|^2W^2+(6\A^3 \gamma^3 -2 \A^3 \gamma^3)\int (y-y^a)^2(1-ay^{a-1})W^2\notag\\
		&+(-n\A c_s -2 \A \gamma c_s +n\A c_s -2 \A \gamma c_s) \int y^{-2}W^2 -16C\A\gamma \int_{\{y=0\}} e^{2\A\phi}(y^a\W_y)^2.\notag
	\end{align}
Now from the  Hardy's inequality as in \cite[Lemma 4.6]{RS}, we have
	\begin{align}\label{hrdy}
		\int W^2 y^{a-3} \le \frac{4}{(3-a-1)^2}\int y^{a-1}W_y^2 + \frac{2}{3-a-1}\int_{\{y=0\}} y^{a-2}W^2.
	\end{align}
	Note that $\A\gamma a (a-1)(a-2) - 2 \A \gamma c_s (a-2)+2\A \gamma a c_s= \A \gamma (1+2s)^2 a \le 0$, hence 
	\begin{align}
		(\A\gamma a (a-1)(a-2) - 2 \A \gamma c_s (a-2)+2\A \gamma a c_s)\int y^{a-3} W^2 \ge 4\A\gamma a \int y^{a-1}W_y^2 + 2\A\gamma a (2-a) \int_{\{y=0\}} y^{a-2}W^2.
	\end{align}
Now writing  $W$ in terms of $\W$ and then again by  using \eqref{ftc}  we get
\begin{align}
\left|	2\A \gamma a(2-a)\int_{\{y=0\}} y^{a-2}W^2 \right| \le 8 \A \gamma \int_{\{y=0\}}  e^{2\A \phi} y^{2a-2}\W^2 \le 8C\A \gamma \int_{\{y=0\}}  e^{2\A \phi} (\py \W)^2.
\end{align}
	Also, Since  $a \le 0,$ we have
	\begin{equation}\label{kjb}
	(y-y^a)^2(1-ay^{a-1}) \ge (y-y^a)^2.\end{equation}
	Hence  using \eqref{hrdy}-\eqref{kjb} in \eqref{kj1} we obtain the following estimate
	\begin{align}
	&\int e^{2\A\phi}y^{-a}f^2 \ge	I_1 -2 \A \int |\n_x W|^2 + 4 \A \gamma \int W_y^2-\frac{\A^3}{2}\int |x|^2 W^2
		+4\A^3 \gamma^3\int (y-y^a)^2W^2\\
		&\;\;\;-4 \A \gamma c_s \int y^{-2}W^2	- 24C\A \gamma \int_{\{y=0\}}  e^{2\A \phi} (\py \W)^2.\notag
	\end{align}
Since $\text{supp}(W(\cdot, t)) \subset \overline{ \mathbb B_{1/2}}$ and $c_s \leq 0$,  we get  from the above inequality that the following holds
\begin{align}\label{bb1p}
	\int e^{2\A\phi}y^{-a}f^2 &\ge	I_1 -2 \A \int |\n_x W|^2-\frac{\A^3}{8}\int  W^2
	+4\A^3 \gamma^3\int (y-y^a)^2 W^2	- 24C\A \gamma \int_{\{y=0\}}  e^{2\A \phi} (\py \W)^2.
\end{align}
	Now we absorb the term $-2 \A \int |\n_x W|^2$ using $I_1$. Note that from an integration by parts  argument
	\begin{align}\label{kjh}
		&-\A \int (\Delta W + \A^2 (|x|^2/4 +\gamma^2 (y^a-y)^2)W + c_s Wy^{-2})W\\
		&=\A \int |\n W|^2 + \alpha  \int_{\{y=0\}}WW_y - \A^3\int (|x|^2/4 +\gamma^2 (y^a-y)^2)W^2 -\A c_s \int W^2 y^{-2}.\notag
	\end{align}
	Now since  $ \text{supp}(W(\cdot, t)) \in \overline{\mathbb B_{1/2}}$, we get the following lower bound  on the last two terms in \eqref{kjh} above
	\begin{align*}
		- \A^3\int (|x|^2/4 +\gamma^2 (y^a-y)^2)W^2 \ge - \frac{\A^3}{16}\int W^2 -\A^3 \gamma^2 \int (y^a-y)^2)W^2.
	\end{align*}
Also, by substituting $W$ in terms of $\W$ and by using \eqref{ftc} it is seen that $\int_{\{y=0\}}WW_y =0.$\\
	Now using Cauchy-Schwarz inequality we get
	\begin{align}\label{li1}
		&-2\A \gamma \int (\Delta W + \A^2 (|x|^2/4 +\gamma^2 (y^a-y)^2)W + c_s Wy^{-2})W\\
		&\le  \int (\Delta W + \A^2 (|x|^2/4 +\gamma^2 (y^a-y)^2)W + c_s Wy^{-2})^2 + \A^2 \gamma^2 \int W^2\notag\\
		&= I_1 +\A^2 \gamma^2 \int W^2.\notag	\end{align}
	Hence  from \eqref{kjh}-\eqref{li1} we get the lower bound for $I_1$ 
	\begin{align}\label{li2}
		&I_1=\int (\Delta W + \A^2 (|x|^2/4 +\gamma^2 (y^a-y)^2)W + c_s Wy^{-2})^2\\
		&\ge -2\A \gamma \int (\Delta W + \A^2 (|x|^2/4 +\gamma^2 (y^a-y)^2)W + c_s Wy^{-2})W -\A^2 \gamma^2 \int W^2\notag \\
		& \ge 2\A \gamma \int |\n W|^2- \frac{2 \gamma\A^3}{16}\int W^2 -2\A^3 \gamma^3 \int (y^a-y)^2)W^2 -\A^2 \gamma^2 \int W^2.\notag
	\end{align}
	Using \eqref{li2} in \eqref{bb1p} we thus obtain
	\begin{align}\label{ufinal}
	&\int e^{2\A\phi}y^{-a}f^2\ge	-2 \A \int |\n_x W|^2 -\frac{\A^3}{8}\int W^2
		+4\A^3 \gamma^3\int (y-y^a)^2W^2\\
		&+2\A \gamma \int |\n W|^2- \frac{2 \gamma\A^3}{16}\int W^2 -2\A^3 \gamma^3 \int (y^a-y)^2)W^2 -\A^2 \gamma^2 \int W^2\notag\\
		&= 2\A(\gamma -1) \int |\n W|^2 -(\A^3/8 + \A^3 \gamma/8 + \A^2 \gamma^2)\int W^2 +2\A^3 \gamma^3 \int (y^a-y)^2)W^2\notag\\
		&\;\;- 24C\A \gamma \int_{\{y=0\}}  e^{2\A \phi} (\py \W)^2.\notag
	\end{align}
Again since  $\text{supp}(W(\cdot, t)) \subset \overline{\mathbb B_{1/2}}$, we have $(y^a-y)^2 \ge 1/4$.  At this point, by letting $\gamma=2$, from \eqref{ufinal} we can now assert that  there exist $C=C(n,s)$ such that  for all  $\A \ge 2$ we have 
$$\A^3 \int e^{2\A\phi}y^{a}\W^2\le C\int e^{2\A\phi}y^{-a}f^2 + C\A\int_{\{y=0\}}  e^{2\A \phi} (\py \W)^2.$$
This completes the proof.
	\end{proof}	
\subsection{Propagation of smallness estimate}
Using the Carleman estimate in Lemma \ref{carlbdr}, we now establish a quantitative propagation of smallness  from the boundary to the bulk as in Lemma \ref{bulkbdrinter} below.  This is a parabolic generalization of  Proposition 5.10 in \cite{RS} where we again  adapt a beautiful idea  from the proof of \cite[Theorem 15]{Ve}. Over here, one should note  that although even   the derivation of the estimate \eqref{vc} in the proof  of  Lemma \ref{bulkdf} involves  similar ideas from \cite{Ve}.  We  however  provide  all the details only in the case of  Lemma \ref{bulkbdrinter} below  and not for the estimate \eqref{vc}  above   because the weights  in the Carleman estimate \eqref{carlbdrest} are of a different kind  compared to the radial singular weights  in \eqref{carleman} which  are instead akin to that in \cite{Ve}.

Similar to that in \cite{RS}, we first  define the following sets for $s \in [1/2,1)$ which are tailored to the geometry of Carleman weights employed in Lemma \ref{carlbdr}. For a given $r>0$, we let
\begin{align*}
	P_r^*(x_0):&=\left\{X \in \R^{n} \times {\{y \geq 0\}} :y\le ((1-s)(r-|x-x_0|^2/4))^{\frac{1}{2-2s}}\right\}\\
	P_r(x_0):&=	P_r^*(x_0) \cap \{y=0\}.
\end{align*}
\begin{lemma}\label{bulkbdrinter}
	Let $s\in [1/2,1)$ and $\W$ be a solution to 
	\begin{align*}
		\D(y^a \n \W) + y^a \W_t&=0 \hspace{2mm}\text{in} \hspace{2mm} \R^{n+1}_+ \times \R,
	\end{align*}
	such that $\W=0$ in $\mathbb{B}_1 \times (1/8,7/8) \cap \{y=0\}$. Also assume that $\nabla_x \W,  \W_t$ are continuous up to the thin set $\{y=0\}$.  Then there exists $\theta \in (0,1)$ and $R_1 <1/2$ such that 
	\begin{align}\label{bulkbdr}
	||y^{a/2}\W||_{L^2(\mathbb B_{R_1}\times (1/4,3/4))} \le C ||y^{a/2}\W||_{L^2(\mathbb B_{1}\times (0,1))}^{\theta}||\py \W||_{L^2(B_1 \times (0,1))}^{1-\theta} + 2||\py \W||_{L^2(B_1 \times (0,1))}.
	\end{align}
\end{lemma}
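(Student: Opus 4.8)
The plan is to localise $\W$ around the target cylinder with cutoffs adapted to the geometry of the Carleman weight $\phi(x,y)=-\frac{|x|^2}{4}+2\big(-\frac{y^{a+1}}{a+1}+\frac{y^2}{2}\big)$ of Lemma \ref{carlbdr}, apply that estimate to the cut‑off function, and then optimise in the large parameter $\A$. First I would fix a small universal $R_1\in(0,1/2)$ and choose a spatial cutoff $\zeta=\zeta(x,y)$ together with a Vessella‑type temporal cutoff $\eta=\eta(t)$ of the type \eqref{d4}. I want $\eta\equiv 1$ on $[1/4,3/4]$, $\operatorname{supp}\eta\subset(1/5,4/5)\subset(1/8,7/8)$, and $\eta$ decaying faster than any power near the endpoints of its support; the relevant consequences are $\eta_t^2\le C_0\,\eta$ and that the set where $|\eta_t|\gtrsim \A^{3/2}\eta$ is a shrinking neighbourhood of $\{t=1/5\}\cup\{t=4/5\}$ on which $\eta\le e^{-h(\A)}$ with $h(\A)/\A\to\infty$. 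I want $\zeta\equiv 1$ on a neighbourhood of $\mathbb B_{R_1}$ and $\operatorname{supp}\zeta\subset\mathbb B_{1/2}$; crucially I would build $\zeta$ from the \emph{smooth} surrogate $\tilde\phi(x,y)=-\frac{|x|^2}{4}+y^2$ (which differs from $\phi$ only by $O(y^{a+1})$ on the support), arranged so that $\partial_y\zeta=O(y)$ near $\{y=0\}$. This last property guarantees both $|\D(y^a\n\zeta)|\le Cy^a$ near the thin set (as in the bound for $\D(y^a\n\phi)$ in the proof of Lemma \ref{bulkdf}) and $\py(\zeta\eta\W)=\zeta\eta\,\py\W$ on $\{y=0\}$. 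By a ``two‑ball'' choice one moreover gets a genuine gap: writing $\phi_1:=\inf_{\mathbb B_{R_1}}\phi<0$ and $\phi_0:=\sup_{\{\n\zeta\neq 0\}}\phi$, one has $\delta:=\phi_1-\phi_0>0$, with $\phi_0,\phi_1,\delta$ depending only on $n,s$.

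Since $\W\equiv 0$ on $\mathbb B_1\times(1/8,7/8)\cap\{y=0\}$, the function $w:=\zeta\eta\W$ vanishes on $\{y=0\}$, is supported in $\overline{\mathbb B_{1/2}}\times(0,1)$, has $\nabla_x w,w_t$ continuous up to $\{y=0\}$, and solves $\D(y^a\n w)+y^a w_t=f$ with $f=2y^a\eta\langle\n\W,\n\zeta\rangle+\eta\W\,\D(y^a\n\zeta)+y^a\zeta\W\eta_t$ and $||y^{-a/2}f||_{L^2}<\infty$; hence Lemma \ref{carlbdr} applies to $w$. On the resulting right‑hand side: the boundary term equals $C\A\int_{\{y=0\}}e^{2\A\phi}\zeta^2\eta^2(\py\W)^2\le C\A\,||\py\W||^2_{L^2(B_1\times(0,1))}$ since $\phi\le 0$ on $\{y=0\}$; the two $\n\zeta$‑pieces of $f$ are supported where $\phi\le\phi_0$, $|\n\zeta|\le C$ and $|\D(y^a\n\zeta)|\le Cy^a$, so together with the Caccioppoli inequality for $\mathscr{L}_a$ (as in \eqref{cc1}, legitimate near $\{y=0\}$ precisely because $\W$ vanishes there on the relevant time interval) they contribute at most $Ce^{2\A\phi_0}\,||y^{a/2}\W||^2_{L^2(\mathbb B_1\times(0,1))}$. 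The delicate piece is $\int e^{2\A\phi}\zeta^2\W^2\eta_t^2 y^a$, and here I would use the idea of \cite[Theorem 15]{Ve}: split the $t$‑integral at the threshold $|\eta_t|\le \sqrt{\A^3/(6C)}\,\eta$. Where this holds, $\eta_t^2\le\frac{\A^3}{6C}\eta^2$, so this part is $\le\frac1{6C}\A^3\int e^{2\A\phi}w^2 y^a$, to be absorbed on the left; on the complement the super‑exponential decay gives $\eta_t^2\le C_0 e^{-h(\A)}$, so (using $\phi\le 0$) that part is $\le C_0 e^{-h(\A)}\,||y^{a/2}\W||^2_{L^2(\mathbb B_1\times(0,1))}\le||y^{a/2}\W||^2_{L^2(\mathbb B_1\times(0,1))}$ for $\A$ large.

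Collecting these bounds, absorbing the $\A^3$‑term on the left, using $e^{-h(\A)}\le e^{2\A\phi_0}$ for $\A\ge\A_0$ (valid since $h(\A)/\A\to\infty$ and $\phi_0<0$), and bounding the left‑hand side from below by $\A^3 e^{2\A\phi_1}\,||y^{a/2}\W||^2_{L^2(\mathbb B_{R_1}\times(1/4,3/4))}$ (where $w=\W$, $\eta=1$ and $\phi\ge\phi_1$), I would arrive, after dividing by $\A^3 e^{2\A\phi_1}$ and discarding harmless powers of $\A$, at
\[
||y^{a/2}\W||^2_{L^2(\mathbb B_{R_1}\times(1/4,3/4))}\;\le\; C e^{-2\A\delta}\,E\;+\;C e^{2\A|\phi_1|}\,D,\qquad \A\ge\A_0,
\]
where $E:=||y^{a/2}\W||^2_{L^2(\mathbb B_1\times(0,1))}$ and $D:=||\py\W||^2_{L^2(B_1\times(0,1))}$. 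If $E/D$ is large enough that $\A:=\frac{1}{2(\delta+|\phi_1|)}\log(E/D)\ge\A_0$, this choice balances the two terms and gives $||y^{a/2}\W||^2_{L^2(\mathbb B_{R_1}\times(1/4,3/4))}\le C\,E^{\theta}D^{1-\theta}$ with $\theta=\frac{|\phi_1|}{\delta+|\phi_1|}\in(0,1)$ universal; taking square roots yields the first term of \eqref{bulkbdr}. Otherwise $E\le CD$, and evaluating the displayed inequality at $\A=\A_0$ gives $||y^{a/2}\W||^2_{L^2(\mathbb B_{R_1}\times(1/4,3/4))}\le C'D$, i.e.\ a bound by a constant times $||\py\W||_{L^2(B_1\times(0,1))}$, which after relabelling constants is the additive term in \eqref{bulkbdr}. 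Both $R_1<1/2$ and $\theta\in(0,1)$ depend only on $n,s$.

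The hard part will be the treatment of the temporal commutator term $\int e^{2\A\phi}\zeta^2\W^2\eta_t^2 y^a$: one must show it splits into a part absorbable into the left side of \eqref{carlbdrest} — which works only because of the $\A^3$ prefactor there — and a part that is super‑exponentially small in $\A$, and this is exactly what dictates the use of the Vessella cutoff \eqref{d4} rather than an ordinary one (it is the relation $|\eta_t/\eta|=o(e^{\psi/2})$ that makes the ``bad'' region negligible). A secondary technical nuisance is arranging the spatial cutoff $\zeta$ to be adapted to the level sets of the singular weight $\phi$, so as to produce the gap $\delta>0$, while still keeping $\partial_y\zeta=O(y)$ near $\{y=0\}$, since the latter is what preserves both $|\D(y^a\n\zeta)|\lesssim y^a$ and the identity $\py(\zeta\eta\W)=\zeta\eta\,\py\W$ needed to control the boundary term by $||\py\W||_{L^2(B_1\times(0,1))}$.
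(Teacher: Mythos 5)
Your high-level outline matches the paper's: localize with a Vessella-type temporal cutoff and a spatial cutoff, apply the weighted Carleman estimate of Lemma~\ref{carlbdr} to the product, split the $\eta_t^2$ commutator into an absorbable part and a super-exponentially small part using the $\A^3$ prefactor, minorize the left side on an interior region, and then optimize in $\A$ with a two-case dichotomy. That part is sound and is exactly how the paper proceeds.

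The genuine gap is in the spatial cutoff geometry, and you have underestimated the issue you flag as a ``secondary technical nuisance.'' Your assertion that a ``two-ball choice'' (cutoff $\equiv 1$ on $\mathbb B_{R_1}$, supported in $\mathbb B_{1/2}$) produces a positive gap $\delta = \phi_1 - \phi_0 := \inf_{\mathbb B_{R_1}}\phi - \sup_{\{\nabla\zeta\ne 0\}}\phi$ is false. With $\gamma=2$, $\phi(x,y) = -\tfrac{|x|^2}{4} - \tfrac{2}{a+1}y^{a+1} + y^2$, and on the ball $\mathbb B_{R_1}$ the point $(0,R_1)$ gives $\phi = -\tfrac{2}{a+1}R_1^{a+1} + R_1^2$, which for $a\le 0$ and small $R_1$ is far more negative than $\sup_{|x|\ge R_1,\,y=0}\phi = -R_1^2/4$ (already at $a=0$: $-2R_1 + R_1^2 \ll -R_1^2/4$), so $\phi_1 < \phi_0$ and $\delta<0$. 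Your fallback suggestion of building $\zeta$ from $\tilde\phi=-\tfrac{|x|^2}{4}+y^2$ does not repair this, because $\tilde\phi$ drops precisely the term $-\tfrac{2}{a+1}y^{a+1}$ that dominates $\phi$ near $\{y=0\}$; the level sets of $\tilde\phi$ have the wrong shape (they are hyperbolic and unbounded) and in any case do not control $\phi$ on the annulus. The paper avoids this by introducing the sets $P_r^*$, i.e.\ sub-level sets of the singular part $|x|^2/4 + \tfrac{2}{a+1}y^{a+1}$, so that on $\partial P_r^*$ one has $\phi = -r + y^2$ exactly; the cutoff $\psi$ is adapted to $P_{r_0}^*\subset P_{2r_0}^*$, and the sign condition $\phi_+(r_0)-\phi_-(r_0/2)<0$ then reduces to $y^2\le r_0^2/4$ on $\partial P_{r_0}^*$, which is where the hypothesis $s\ge 1/2$ is used (it gives $1/(1-s)\ge 2$, hence $y^2\le (r_0/2)^{1/(1-s)}\le r_0^2/4$). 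Your proposal neither establishes the gap nor isolates the role of $s\ge 1/2$; fixing it requires replacing the ball-based or $\tilde\phi$-based cutoff by the $P_r^*$-adapted one.
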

\begin{proof}We work with $(-T,T)$ instead of $(0,T)$. Also we will write $\phi_{-}(r)=\underset{X\in \partial P_r^*}{\text{inf}}\phi(X)$ and $\phi_+(r)=\underset{X\in \partial P_r^*}{\text{sup}}\phi(X).$  We note that on $\partial P_r^*$, we have that \[ \frac{2y^{a+1}}{a+1} + \frac{|x|^2}{4} =r.\] Therefore from the definition of $\phi$ in Lemma \ref{carlbdr}, it follows that \begin{equation}\label{inf}  \text{inf}_{\partial P_r^*} \phi= \text{inf}_{ P_r^*} \phi = -r.\end{equation} Moreover, it is also seen that $\phi_+(r)$ is a decreasing function of $r$ for all $r \leq C(a)$.

Consider $W= \eta(t) \psi(X) \W$
where as before,  $\eta$ is a defined as
\[\eta(t)=\begin{cases}
	0 &\textrm{if}\;\;-T \le t \le T_1\\
	\operatorname{exp}\left(-\frac{T^3(T_2+t)^4}{(T_1+t)^3(T_1-T_2)^4}\right) &\textrm{if}\;\;-T_1 \le t \le T_1 \\
	1 &\textrm{if}\;\;-T_2 \le t \le 0
\end{cases}\]
$\eta(t)=\eta(-t)$ for $t>0,$ where $T_1=3T/4$ and $T_2=T/2,$
and $\psi$ is a smooth cut-off function symmetric in $y$  such that $\psi =1$ in $P_{r_0}^*$, $\psi =0$ in $\R^{n+1}_+ \setminus P_{2r_0}^*$. Note that one can ensure that  $|\partial_y \psi(x,y)| \le C(r_0)y$. We also note that $W$ solves 
\begin{equation}
\begin{cases}
	\D(y^a \n W) + y^a W_t&=\eta \W \D(y^a \n \psi) + 2\eta \langle \n \W, \n \psi\rangle  y^a +\eta_t \psi \W y^a   \hspace{2mm}\text{in} \hspace{2mm} \R^{n+1}_+ \times \R\\
	W=0\hspace{2mm} \text{on}\hspace{2mm} \{y=0\}.
	\end{cases}
\end{equation}
Also we observe  that $\text{supp}(W) \subset  P_{2r_0}^* \times (-T,T)$, where $r_0$ is choosen such that $ P_{2r_0}^* \subset \mathbb B_{1/2}.$
Now by applying the Carleman estimate \eqref{carlbdrest} in  Lemma \ref{carlbdr} to $W$, we obtain
\begin{align}\label{carlapp}
	\A^3 \int_{P_{2r_0}^* \times (-T,T)}e^{2\A \phi}W^2y^a &\le C \int_{P^*_{2r_0}\setminus P_{r_0}^*\times (-T,T)}e^{2\A \phi}g^2 y^{-a} + C \int_{P^*_{2r_0} \times (-T,T)}e^{2\A \phi}\eta_t^2 \W^2 \psi^2 y^{a}\\
	&+ C \A\int_{P_{2r_0}\times (-T,T)} e^{2\alpha \phi}(\py \W)^2 \eta^2 \psi^2=I_1+I_2+I_3.\notag
\end{align}
where \begin{equation}\label{gexp}g=\eta \W \D(y^a \n \psi) + 2\eta \langle \n \W, \n \psi\rangle  y^a.\end{equation} We observe that $g$ is supported on $P_{2r_0}^* \setminus P_{r_0}^*$.  We also note that $\eta_t$ is supported in $(-T_1,-T_2) \cup (T_2,T_1).$ We will only estimate $I_2$ in the region $P_{2r_0}^*\times (-T_1,-T_2)$ since other estimate is similar.
We write $P_{2r_0}^*\times (-T_1,T_2)=D \cup (P_{2r_0}^* \times (-T_1,T_2) \setminus D),$ with $D$ being the set where
$$C\frac{\eta_t^2}{\eta^2} > \frac{\A^3}{2}.$$
Now by a direct computation, it is easily seen that the  above inequality is equivalent to insisting
\begin{equation}\label{tyui1}2C \frac{16 T^6 }{(T_1+t)^8} > \A^3.\end{equation}
Hence for large $\A$ (depends also on $T$), $t\in (-T_1,-T_2)$, we get that \eqref{tyui1} implies $$\frac{T_1+t}{T} <\frac{1}{12}.$$
Now since $T_1-T_2=T/4$, we consequently obtain from the above inequality $$|T_2+t| > \frac{T}{6}.$$
Moreover, as  $\eta$ decays as exponential near $T_1$ we get that there exists a universal constant $C_1$ such that  \begin{equation}\label{al1}\frac{\eta_t^2}{\eta^2}\eta \le C_1.\end{equation} Also, Note that in $D,$ we have for large $\A$
\begin{align}\label{lD}
	&\operatorname{ln} CC_1 - 2\A  \phi_+(r_0) + \operatorname{ln} \eta\\
	&=	\operatorname{ln} CC_1  -2\A \phi_+(r_0) -\frac{T^3(T_2+t)^4}{(T_1+t)^3(T_1-T_2)^4}\notag\\
	& \le \operatorname{ln} CC_1  -2\A \phi_+(r_0) - C(T)\A^{9/8} \le 0.\notag
\end{align}
 We would like to emphasise the last inequality is possible because   the exponent of  $\A$ in $C(T) \alpha^{9/8}$  is strictly greater than $1$ and this is precisely where  the presence of the power  $\A^3$ in front of the integral $\int e^{2\A \phi}\W^2y^a$ in \eqref{carlbdrest} plays a crucial  role.  Thus for all  large $\A'$s, we have \begin{equation}\label{lop} CC_1 \eta \le e^{2\A \phi_+(r_0)}.\end{equation}
Using this, we estimate $I_2$  as follows.
\begin{align}\label{i21}
	C \int_{P_{2r_0}^*\times (-T_1,-T_2)}e^{2\A \phi}\eta_t^2 \W^2 \psi^2 y^a &=C \int_{D}e^{2\A \phi}\eta_t^2 \W^2 \psi^2 y^a+C \int_{P_{2r_0}^* \times (-T_1,-T_2)\setminus D}e^{2\A \phi}\eta_t^2 \W^2 \psi^2 y^a\\
	&\le C \int_{D}\left(\frac{\eta_t^2}{\eta^2}\eta\right)\eta \W^2  y^a+C \int_{P_{2r_0}^* \times (-T_1,-T_2)\setminus D}e^{2\A \phi}\frac{\eta_t^2}{\eta^2}\eta^2 \W^2 \psi^2 y^a\notag\\
	& \le e^{2 \A \phi_+(2r_0)} \int_{P_{2r_0}^*\times (-T,T)} \W^2 y^a + \frac{\A^3}{2} \int_{P_{2r_0}^*\times (-T,T)}e^{2\A \phi}W^2 y^a\notag\\  &\text{(using \eqref{al1} and \eqref{lop}).}\notag
\end{align}
Now since $\psi$ is smooth and symmetric in $y$ across $\{y=0\}$, we have  $|\psi_y| \le C(r_0)y$  moreover $\nabla^2 \psi$ is bounded. 
Now since  $\W=0$ on $\{y=0\}$, given the expression of $g$ as in \eqref{gexp}, we get from a Caccioppoli type energy estimate that the following holds
\begin{align}\label{i1energy}
	\int_{(P_{2r_0}^* \setminus P_{r_0}^*)\times (-T,T)}  e^{2\alpha \phi} g^2 y^a \le C  e^{2\alpha \phi_+(r_0)} \int_{\mathbb B_1 \times (-T,T)}\W^2y^a.
\end{align}
Here we used that $\phi_+$ is a decreasing function of $r$.
 Using \eqref{i21} and \eqref{i1energy} in \eqref{carlapp}  we thus obtain 
\begin{align}\label{try}
	\frac{\A^3}{2} \int_{P_{2r_0}^*\times (-T,T)}e^{2\A \phi}W^2 y^a \le Ce^{2 \A \phi_+(r_0)} \int_{\mathbb B_1 \times (-T,T)} \W^2 y^a +  C \A\int_{P_{2r_0} \times (-T,T)}(\py \W)^2.
\end{align}
We now minorize the integral on the left hand side  in \eqref{try}  over  the set $P_{r_0/2}^*\times (-T_2,T_2)$. Furthermore, we note that from \eqref{inf} it follows that on the set $P_{r_0/2}^*$, we have
\begin{equation}\label{inf1}
e^{2\alpha \phi} \geq e^{2\alpha \phi_{-}(r_0/2)}.
\end{equation}

Thus using \eqref{inf1} in \eqref{try}  we deduce that the following holds for all $\A$ large enough
\begin{equation}\label{kjhg}\int_{P_{r_0/2}^*\times (-T_2,T_2)}\W^2 y^a \le  e^{2 \A (\phi_+(r_0)-\phi_{-}(r_0/2))}\int_{B_1^* \times (-T,T)}\W^2y^a + e^{-2\A \phi_{-}(r_0/2)}\int_{B_1 \times (-T,T)}(\py \W)^2.\end{equation}
We now observe that  $\phi_{+}(r_0)-\phi_{-}(r_0/2) < 0$. This is seen as follows.
\begin{align}\label{yu}
	\phi_+(r_0) \le -r_0 +y^2 \le -r_0 +r_0^2/4 \le -\frac{3r_0}{4} < -r_0/2= \phi_{-}(r_0/2)\le 0.
\end{align}
In \eqref{yu} we used that since
$s\ge 1/2$, therefore we  have $y^2 \le ((1-s)(r_0-|x|^2/4))^{1/1-s} \le r_0^2/4.$ We now split the rest of the argument into two cases.

\emph{Case 1:} When $\int_{\mathbb B_1 \times (-T,T)}\W^2y^a \leq 2 \int_{B_1 \times (-T,T)}(\py \W)^2$. In this case, the desired estimate \eqref{bulkbdr} is seen to hold.

\medskip

\emph{Case 2:} When $\int_{\mathbb B_1 \times (-T,T)}\W^2y^a > 2 \int_{B_1 \times (-T,T)}(y^a \W_y)^2$. In this case, we let
$$ \A = \frac{1}{-2\phi_+(r_0)}\operatorname{ln}\left(\int_{\mathbb B_1 \times (-T,T)}\W^2y^a\middle/\int_{B_1 \times (-T,T)}(\py \W)^2\right) + C_0$$  in \eqref{kjhg} above, where $C_0$ is  a universal constant, we again conclude that \eqref{bulkbdr} holds in view of the fact that $P_{r_0/2}^*$ contains $\mathbb B_{R_1}$ for some $R_1$ depending only $r_0, n, a$.
This completes the proof.
\end{proof}
From Lemma \ref{bulkbdrinter}, the following propagation of smallness estimate follows which relates smallness in the bulk to the $C^{2}$ norm in the boundary. The proof of such a  lemma  is inspired by ideas  in \cite{Lin}.
\begin{lemma}\label{bbinterf}
	Let $U$ solve \eqref{extpr}. Then there exist universal $C=C(n,s)$ such that
	\begin{align*}
		||y^{a/2}U||_{L^2(\mathbb B_{R_1} \times (1/4,3/4))} &\le C||u||_{L^2(\R^{n+1})}^{1-\theta} \left(||Vu||^{\theta}_{L^2(B_1 \times (0,1))}+ ||u||^{\theta}_{C^2(B_2 \times (0,1))}\right)\\
		&\;\;\; +C(||Vu||_{L^2(B_1 \times (0,1))}+ ||u||_{C^2(B_2 \times (0,1))}),
	\end{align*}
where $R_1$ and $\theta$ are as in Lemma \ref{bulkbdrinter}.
\end{lemma}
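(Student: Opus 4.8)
The plan is to deduce the estimate from Lemma~\ref{bulkbdrinter} by subtracting from $U$ a lift of its Dirichlet trace which annihilates that trace on the relevant part of $\{y=0\}$ while preserving the extension equation. First I would fix a cut-off $\chi\in C^\infty_0(B_2\times(0,1))$ with $\chi\equiv 1$ on $B_1\times(1/8,7/8)$, and let $E$ be the solution of the extension equation in \eqref{extpr} with Dirichlet datum $\chi u$. Since $\chi u\in C^2_0(\R^{n+1})\subset\operatorname{Dom}(H^s)$, the analogue of \eqref{np} for \eqref{extpr} gives $\py E((x,t),0)=-c_{n,s}\,H^s(\chi u)(x,t)$ for a positive constant $c_{n,s}$ (in the backward problem the symbol of $H^s$ is replaced by its complex conjugate, which does not affect any of the $L^2$ bounds below). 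Setting $\W:=U-E$, one then has that $\W$ solves the homogeneous extension equation on $\R^{n+1}_+\times\R$, that $\W=0$ on $\mathbb B_1\times(1/8,7/8)\cap\{y=0\}$ (there $E=\chi u=u=U$), that $\nabla_x\W$ and $\W_t$ are continuous up to $\{y=0\}$ (by Lemma~\ref{reg1} for $U$, and the analogous regularity for the extension $E$ of a $C^2$ datum), and that $\py\W((x,t),0)=V(x,t)u(x,t)+c_{n,s}\,H^s(\chi u)(x,t)$.

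Next I would apply Lemma~\ref{bulkbdrinter} to $\W$ and control the two quantities on its right-hand side. For the bulk term, using the $L^2\to L^2$ mapping of the Poisson operator of the extension, uniformly over bounded $y$-intervals (so that $\int_0^1 y^a\|U(\cdot,y,\cdot)\|_{L^2(\R^{n+1})}^2\,dy\le C\|u\|_{L^2(\R^{n+1})}^2$, and likewise for $E$ together with $\|\chi u\|_{L^2}\le\|u\|_{L^2}$), I get
\[
\|y^{a/2}\W\|_{L^2(\mathbb B_1\times(0,1))}\le\|y^{a/2}U\|_{L^2(\mathbb B_1\times(0,1))}+\|y^{a/2}E\|_{L^2(\mathbb B_1\times(0,1))}\le C\|u\|_{L^2(\R^{n+1})}.
\]
For the Neumann term, $\|\py\W\|_{L^2(B_1\times(0,1))}\le\|Vu\|_{L^2(B_1\times(0,1))}+c_{n,s}\|H^s(\chi u)\|_{L^2(\R^{n+1})}$, and here is the one nonroutine point, namely $\|H^s(\chi u)\|_{L^2(\R^{n+1})}\le C\|u\|_{C^2(B_2\times(0,1))}$: by Plancherel and \eqref{sHft},
\[
\|H^s g\|_{L^2}=\big\|(4\pi^2|\xi|^2+2\pi i\sigma)^s\,\hat g\big\|_{L^2}\le C\big\|(1+|\xi|^2+|\sigma|)\,\hat g\big\|_{L^2}\le C\big(\|g\|_{L^2}+\|\Delta_x g\|_{L^2}+\|\partial_t g\|_{L^2}\big),
\]
using that $|\xi|^{2s}\le 1+|\xi|^2$ and $|\sigma|^s\le 1+|\sigma|$ for $s\in(0,1)$; applying this with $g=\chi u$ and using that $\chi u$ is supported in $B_2\times(0,1)$, the right-hand side is $\le C\|\chi\|_{C^2}\|u\|_{C^2(B_2\times(0,1))}$. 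Finally, the contribution of $E$ over the target region is controlled crudely: $\|y^{a/2}E\|_{L^2(\mathbb B_{R_1}\times(1/4,3/4))}\le C\|\chi u\|_{L^2(\R^{n+1})}\le C\|u\|_{C^2(B_2\times(0,1))}$.

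Combining these with $\|y^{a/2}U\|_{L^2(\mathbb B_{R_1}\times(1/4,3/4))}\le\|y^{a/2}\W\|_{L^2(\mathbb B_{R_1}\times(1/4,3/4))}+\|y^{a/2}E\|_{L^2(\mathbb B_{R_1}\times(1/4,3/4))}$, the estimate of Lemma~\ref{bulkbdrinter}, and the elementary inequality $(p+q)^\theta\le p^\theta+q^\theta$, one reaches the asserted inequality, with $\theta$ and $R_1$ as in Lemma~\ref{bulkbdrinter} (the placement of the exponents $\theta$ and $1-\theta$ on the two factors is immaterial, since both lie in $(0,1)$ and this inequality is used only qualitatively in the proof of Theorem~\ref{mainthrm}). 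I expect the main --- indeed essentially the only --- substantive step to be the mapping bound $\|H^s(\chi u)\|_{L^2}\le C\|u\|_{C^2}$, and in particular the observation that for $s\le 1$ the parabolic symbol $(4\pi^2|\xi|^2+2\pi i\sigma)^s$ is dominated by $1+|\xi|^2+|\sigma|$, so that only one time derivative and two spatial derivatives of $\chi u$ are needed --- precisely what the parabolic $C^2$ norm controls; the remaining ingredients, the $L^2$ mapping of the Poisson extension and the regularity up to $\{y=0\}$, are standard and provided by Lemma~\ref{reg1} and the extension theory.
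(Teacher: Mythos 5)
Your proposal is correct and follows essentially the same route as the paper: both introduce a cutoff of the Dirichlet trace and its Poisson extension, subtract to produce a function vanishing on the thin strip, apply Lemma~\ref{bulkbdrinter}, and control the resulting Neumann term by the Fourier-multiplier bound $\|H^s(\chi u)\|_{L^2}\le C\|u\|_{C^2}$ together with the $L^2\to L^2$ mapping property of the extension (Lemma 4.5 in \cite{BG}). Your remarks on the backward-vs-forward symbol and the immateriality of the placement of $\theta$ and $1-\theta$ are small but accurate clarifications of points the paper glosses over.
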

\begin{proof}
Let $\eta \in C_c^{\infty}(B_2 \times (0,1))$ be  a cutoff function such that $\eta =1$ in $B_1 \times (1/8,7/8)$. First note that if $u \in C^2$ then $\eta u \in \text{Dom}(H^s)$ as
\begin{align*}
	&\int (1+| 4 \pi^2 |\xi|^2+ 2 \pi i \sigma| )^{2s}|\hat{\eta u}(\xi, \sigma)|^2\\
	&\le C \int (1+| \xi|^4+  \sigma^2 )|\hat{\eta u}(\xi, \sigma)|^2\\
	&\le C\left( ||u||_{C^2(B_2 \times (0,1))} \right)^2.
\end{align*}
Now let $W$ be a solution of 
\begin{align*}
	\D(y^a \n W) + y^a W_t&=0 \hspace{2mm}\text{in} \hspace{2mm} \R^{n+1}_+ \times \R\\
	W&=\eta u\hspace{2mm} \text{on}\hspace{2mm} \{y=0\}.
\end{align*}
Since $\eta u \in \text{Dom}(H^s)$, we have
\begin{equation}\label{no1}\left(\int_{\{y=0\}}(\py W)^2\right)^{1/2} = ||H^s (\eta u)||_{L^2} \le ||u||_{C^2}.\end{equation}
Now consider $\tilde{W}= U-W$ then we have $\tilde{W}=0$ on $B_1.$ We note that by an odd reflection of $\tilde W$ across $\{y=0\}$ and by arguments as in Section 5 in \cite{BG}, one can show that $\tilde W$ satisfies regularity assumptions in Lemma \ref{bulkbdrinter}.  Therefore by applying the estimate \eqref{bulkbdr} in Lemma \ref{bulkbdrinter} to $\tilde W$ we get
\begin{align*}
	||y^{a/2}\tilde{W}||_{L^2(\mathbb B_{R_1} \times (1/4,3/4))} \le C||y^{a/2}\tilde{W}||^{1-\theta}_{L^2(\mathbb B_1 \times (0,1))}||\py \tilde W||_{L^2(B_1\times (0,1))}^{\theta} +2||\py \tilde W||_{L^2(B_1\times (0,1))}
\end{align*} 
We now deduce from the above inequality
\begin{align}
	&||y^{a/2}U||_{L^2(\mathbb B_{R_1} \times (1/4,3/4))} \le ||y^{a/2} \tilde W||_{L^2(\mathbb B_{R_1} \times (1/4,3/4))} + ||y^{a/2}W||_{L^2(\mathbb B_{R_1}\times (1/4,3/4))}\\
	& \le C||y^{a/2}\tilde{W}||^{1-\theta}_{L^2(\mathbb B_1 \times (0,1))}||\py \tilde W||_{L^2(B_1\times (0,1))}^{\theta} +2||\py \tilde W||_{L^2(B_1\times (0,1))}\notag\\
	& \;\;\; + ||y^{a/2}W||_{L^2(\mathbb B_{R_1}\times (1/4,3/4))}.\notag
	\end{align} 
Also we have ( see for instance Lemma 4.5 in \cite{BG}) that the following holds \begin{equation}\label{no2}||y^{a/2}W||_{L^2(\R^{n} \times (0,1) \times (0,\infty))} \le C||\eta u||_{L^2(\R^{n+1})} \le C||u||_{L^2(B_2\times (0,1))}\end{equation} 
and  \begin{equation}\label{no4}||y^{a/2}U||_{L^2(\mathbb B_1 \times (0,1))}\le C ||u||_{L^2(\R^{n+1})}.\end{equation} Thus  from \eqref{no1}-\eqref{no4} we have 
\begin{align*}
	||y^{a/2}U||_{L^2(\mathbb B_{R_1} \times (1/4,3/4))} &\le C||u||_{L^2(\R^{n+1})} ^{1-\theta}\left(||\py U||^{\theta}_{L^2(B_1 \times (0,1))}+ ||u||^{\theta}_{C^2(B_2 \times (0,1))}\right)\\
	&\;\;\; + C(||\py U||_{L^2(B_1 \times (0,1))}+ ||u||_{C^2(B_2 \times (0,1))}),
\end{align*}
from which the desired estimate in the Lemma  follows using $\py U= Vu$.
\end{proof}

By a translation in time, the following corollary follows  from Lemma \ref{bbinterf} which will be more convenient to use in the  proof of Theorem \ref{mainthrm}.
\begin{cor}\label{remark}
		Let $U$ be as in Lemma \ref{bbinterf}. Then the following inequality holds		\begin{align}\label{shft}
			&||y^{a/2}U||_{L^2(\mathbb B_{R_1} \times (-R_1^2,R_1^2))} \le C||u||_{L^2(\R^{n+1})}^{1-\theta} \left(||Vu||^{\theta}_{L^2(B_1 \times (-1,1))}+ ||u||^{\theta}_{C^2(B_2 \times (-1,1))}\right)\\
			&\;\;\; +C(||Vu||_{L^2(B_1 \times (-1,1))}+ ||u||_{C^2(B_2 \times (-1,1))}),\notag
		\end{align}
		where $R_1$ and $\theta$ are as in Lemma \ref{bulkbdrinter}.
\end{cor}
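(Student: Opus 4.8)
The plan is to deduce Corollary \ref{remark} from Lemma \ref{bbinterf} by an elementary change of the time variable that converts the reference time-interval $(1/4,3/4)$ centered (in a parabolic-scaling sense) around $t=1/2$ into a symmetric interval centered at $t=0$, and likewise relabels the source intervals $(0,1)$ into $(-1,1)$. Concretely, given $U$ as in Lemma \ref{bbinterf}, set $\tilde U(x,y,t):=U(x,y,t+1/2)$ and $\tilde u(x,t):=u(x,t+1/2)$, $\tilde V(x,t):=V(x,t+1/2)$. Since equation \eqref{extpr} is invariant under translation in $t$ (the coefficients $y^a$ do not depend on $t$), $\tilde U$ again solves \eqref{extpr} with potential $\tilde V$, and $\tilde u$, $\tilde V$ still satisfy the structural hypotheses \eqref{vasump}. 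Applying Lemma \ref{bbinterf} to $\tilde U$ then gives exactly \eqref{shft} with $R_1^2$ in place of the shifted interval, once one checks that the shifted balls $\mathbb B_{R_1}\times(1/4,3/4)$, $B_1\times(0,1)$, $B_2\times(0,1)$ become $\mathbb B_{R_1}\times(-R_1^2+1/2-1/2,\dots)$; the only subtlety is matching radii of the temporal slabs, which I address next.

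The one genuinely nontrivial point is that Lemma \ref{bbinterf} is stated with the \emph{fixed} temporal window $(1/4,3/4)$ on the left and $(0,1)$ on the right, whereas the corollary wants the parabolically natural window $(-R_1^2,R_1^2)$ on the left (of half-width $R_1^2<1/4$ since $R_1<1/2$) and $(-1,1)$ on the right. First I would note that shrinking the left-hand interval only makes the estimate easier: since $R_1<1/2$ we have $R_1^2<1/4$, so $(-R_1^2,R_1^2)\subset(-1/4,1/4)$, and after the shift $t\mapsto t-1/2$ this sits inside $(1/4,3/4)$; hence
\[
||y^{a/2}U||_{L^2(\mathbb B_{R_1}\times(-R_1^2,R_1^2))}\le ||y^{a/2}\tilde U||_{L^2(\mathbb B_{R_1}\times(1/4,3/4))},
\]
where $\tilde U(\cdot,t)=U(\cdot,t-1/2)$. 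Enlarging the right-hand intervals from $(0,1)$ to $(-1,1)$ likewise only enlarges the domains of the norms $||Vu||_{L^2}$ and $||u||_{C^2}$ appearing on the right, so the inequality is preserved in the direction we need. Thus the shifted version of Lemma \ref{bbinterf} immediately yields \eqref{shft}, with the same $R_1$ and $\theta$ and with $C$ unchanged (or enlarged by a harmless universal factor coming from the fact that $\mathbb B_1\times(-1,1)$ covers $\mathbb B_1\times(0,1)$ and similarly for the $C^2$-norm estimate \eqref{no1}, which only requires a cutoff supported in $B_2\times(-1,1)$ equal to $1$ on $B_1\times$ a neighborhood of the relevant time slab).

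The key steps in order: \textbf{(i)} record that \eqref{extpr} and the hypotheses \eqref{vasump} are invariant under $t\mapsto t+c$, so it suffices to prove the estimate for some translate; \textbf{(ii)} choose the translation $t\mapsto t-1/2$ and verify the inclusions $(-R_1^2,R_1^2)-1/2\subset(1/4,3/4)$ using $R_1<1/2$, and $(0,1)\subset(-1,1)-1/2=(-3/2,1/2)$ — here one should be slightly careful with the direction of the shift and may instead center at a point making all inclusions transparent; \textbf{(iii)} invoke Lemma \ref{bbinterf} for the translated solution and undo the translation, absorbing the monotonicity of $L^2$- and $C^2$-norms under domain enlargement into the constant $C$. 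I expect \textbf{step (ii)}, the bookkeeping of which time-intervals contain which after the shift, to be the only place requiring care; everything else is a one-line invocation of the already-proven lemma. No new analytic input is needed, so the proof is short.

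\begin{proof}
Equation \eqref{extpr} has $t$-independent coefficients, and the regularity/structural hypotheses \eqref{vasump} on $V$ are translation invariant in time. Hence, for any $c\in\R$, the functions $U^{c}(x,y,t):=U(x,y,t+c)$, $u^{c}(x,t):=u(x,t+c)$, $V^{c}(x,t):=V(x,t+c)$ again satisfy the hypotheses of Lemma \ref{bbinterf}, with the same constants. Applying Lemma \ref{bbinterf} to $U^{c}$, $u^{c}$, $V^{c}$ and then substituting back $t\mapsto t-c$, we obtain for every $c$
\begin{align*}
	||y^{a/2}U||_{L^2(\mathbb B_{R_1} \times (c+1/4,\,c+3/4))} &\le C||u||_{L^2(\R^{n+1})}^{1-\theta} \left(||Vu||^{\theta}_{L^2(B_1 \times (c,c+1))}+ ||u||^{\theta}_{C^2(B_2 \times (c,c+1))}\right)\\
	&\;\;\; +C(||Vu||_{L^2(B_1 \times (c,c+1))}+ ||u||_{C^2(B_2 \times (c,c+1))}).
\end{align*}
Take $c=-1/2$. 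Since $R_1<1/2$ we have $R_1^2<1/4$, so $(-R_1^2,R_1^2)\subset(-1/4,1/4)=(c+1/4,c+3/4)$, and therefore
\[
||y^{a/2}U||_{L^2(\mathbb B_{R_1} \times (-R_1^2,R_1^2))}\le ||y^{a/2}U||_{L^2(\mathbb B_{R_1} \times (-1/4,1/4))}.
\]
Moreover $(c,c+1)=(-1/2,1/2)\subset(-1,1)$, so
\[
||Vu||_{L^2(B_1 \times (-1/2,1/2))}\le ||Vu||_{L^2(B_1 \times (-1,1))},\qquad
||u||_{C^2(B_2 \times (-1/2,1/2))}\le ||u||_{C^2(B_2 \times (-1,1))},
\]
the last inequality because the parabolic $C^2$-norm over a larger cylinder dominates that over a smaller one. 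Combining the three displays yields precisely \eqref{shft}, with the same $R_1$ and $\theta$ as in Lemma \ref{bulkbdrinter} and with $C$ unchanged.
\end{proof}
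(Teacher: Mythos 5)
Your proof is correct and follows exactly the route the paper intends: the paper dismisses this corollary with the single remark ``By a translation in time, the following corollary follows from Lemma \ref{bbinterf}'', and your argument simply fills in the bookkeeping (translation invariance of \eqref{extpr} and \eqref{vasump}, choice $c=-1/2$, the inclusion $(-R_1^2,R_1^2)\subset(-1/4,1/4)$ using $R_1<1/2$, and monotonicity of the norms under domain enlargement) that the paper leaves implicit.
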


With Lemma \ref{bulkdf} and Corollary \ref{remark} in hand, we now proceed with the proof of Theorem \ref{mainthrm}.

\begin{proof}[Proof of Theorem \ref{mainthrm}]
	Let $U$ be the solution to the  corresponding extension problem \eqref{extpr}. We choose  $\rho$ such that $\rho < \text{min}\{R_0 /8,R_1^2/8\}$  where $R_0$ and $R_1$ are as in   Lemma \ref{carlmanbulk} and Lemma \ref{bbinterf} respectively. Since $\rho < R_0/8$, from Lemma \ref{bulkdf}, we have
	\begin{equation}\label{cr1}\int_{\mathbb B_\rho\times (0,1)}U^2 y^a >C \rho^{A}, \end{equation} where $A$ is as in Lemma \ref{bulkdf}.
	The estimate \eqref{cr1}  implies  that there exist $t_0 \in (\rho^2,1-\rho^2)$ such that 
	\begin{align}\label{l1}
		 \int_{\mathbb B_{\rho}\times (t_0-\rho^2,t_0+\rho^2)}U^2y^a \geq C\rho^{A+2}.	\end{align}
	We now rescale  $U$ to $\tilde{U}$ as   follows
	$$\tilde{U}(x,y,t)=U\left(\frac{\rho}{R_1} x,\frac{\rho}{R_1} y,\left(\frac{\rho}{R_1}\right)^2 t+t_0\right),$$
  Note that $\tilde{U}$ solves
	\begin{align*}
		\D(y^a \n \tilde{U}) + y^a \tilde{U}_t&=0 \hspace{2mm}\text{in} \hspace{2mm} \R^{n+1}_+\times \R\\
		\tilde{U}&=\tilde{u}(x,t) \hspace{2mm} \text{on}\hspace{2mm} \{y=0\},
	\end{align*}
where $\tilde{u}(x,y,t)=u\left(\frac{\rho}{R_1} x,(\frac{\rho}{R_1})^2t+t_0)\right).$ Since $\py U =Vu,$
	we  have 
	\begin{align}\label{neu}
	\py \tilde U=\left(\frac{\rho}{R_1}\right)^{1-a}(Vu)\left(\frac{\rho}{R_1} x,\left(\frac{\rho}{R_1}\right)^2 t+t_0\right).
	\end{align}
	Then from Corollary  \ref{remark}, we have
		\begin{align}\label{eqrm}
		||y^{a/2}\tilde{U}||_{L^2(\mathbb B_{R_1} \times (-R_1^2,R_1^2))} &\le C||\tilde{u}||_{L^2(\R^{n+1})}^{1-\theta} \left(||\py \tilde{U}||^{\theta}_{L^2(B_1 \times (-1,1))}+ ||\tilde{u}||^{\theta}_{C^2(B_2 \times (-1,1))}\right)\\
		&\;\;\; +C(||\py \tilde{U}||_{L^2(B_1 \times (-1,1))}+ ||\tilde{u}||_{C^2(B_2 \times (-1,1))}).\notag
	\end{align}
  By change of variables and the assumption $||u||_{L^2(\R^{n+1})} \le 1$, we get
  \begin{align}\label{cov1} ||\tilde{u}||^{1-\theta}_{L^2(\R^{n+1})}=(\rho/R_1)^{-(1-\theta)\frac{n+2}{2}}||u||^{1-\theta}_{L^2(\R^{n+1})} \le (\rho/R_1)^{-(1-\theta)\frac{n+2}{2}}.
  \end{align}
Now using \eqref{neu} and change of variables we find
\begin{align}\label{cov2}
	&||\py \tilde U||_{L^2(B_1 \times (-1,1))} \leq ||V||_{L^{\infty}}\left(\frac{\rho}{R_1}\right)^{(1-a)}||\tilde{u}||_{L^2(B_1 \times (-1,1))} \\
	&=||V||_{L^{\infty}}\left(\frac{\rho}{R_1}\right)^{(1-a)}\left(\frac{\rho}{R_1}\right)^{-\frac{n+2}{2}}||u||_{L^2(B_{\rho/R_1}\times (-(\rho/R_1)^2+t_0,(\rho/R_1)^2+t_0))} \notag \\
	&=||V||_{L^{\infty}}\left(\frac{\rho}{R_1}\right)^{(1-a)}\left(\frac{\rho}{R_1}\right)^{-\frac{n+2}{2}}C_n \left(\frac{\rho}{R_1}\right)^{\frac{n+2}{2}} ||u||_{L^{\infty}(B_{\rho/R_1}\times (-(\rho/R_1)^2+t_0,(\rho/R_1)^2+t_0))}.\notag
\end{align}
Also, since $\rho < R_1^2/8$, we have $2\rho/R_1 <1$  which in particular implies
 \begin{align}\label{cov3}
 	||\tilde{u}||_{C^2(B_2 \times (-1,1))}\leq ||u||_{C^2(B_{2\rho/R_1}\times (-(\rho/R_1)^2+t_0,(\rho/R_1)^2+t_0))}.
 \end{align}
Now by  using \eqref{cov1}, \eqref{cov2}, \eqref{cov3}  in \eqref{eqrm} and also that $\left(\frac{\rho}{R_1}\right)^{(1-a)} \le 1$, we obtain
	\begin{align*}
		&(\rho/R_1)^{-(n+3+a)/2}||y^{a/2}U||_{L^2(\mathbb B_{\rho}\times (-\rho^2+t_0,\rho^2+t_0))}\\ & \le 2C (C_n^{\theta}+||V||^{\theta}_{L^{\infty}})(\rho/R_1)^{-\frac{(n+2)}{2}(1-\theta)} ||u||^{\theta}_{C^2((B_{2\rho/R_1}\times (-(\rho/R_1)^2+t_0,(\rho/R_1)^2+t_0))}\\
		&\;\;\;+2C(C_n+||V||_{L^{\infty}})||u||_{C^2((B_{2\rho/R_1}\times (-(\rho/R_1)^2+t_0,(\rho/R_1)^2+t_0))}.
	\end{align*}
Then by  multiplying the above inequality on both sides with  $(\rho/R_1)^{\frac{(n+2)}{2}(1-\theta)}$  and using $(\rho/R_1)^2+t_0<1+t_0<2$, $-(\rho/R_1)^2+t_0>-1+t_0>-1$, we get
	\begin{align}\label{pf}
	&(\rho/R_1)^{-((n+2)\theta +1 +a)/2}||y^{a/2}U||_{L^2(\mathbb B_{\rho}\times (-\rho^2+t_0,\rho^2+t_0))}\\ & \le 2C (C_n^{\theta}+||V||^{\theta}_{L^{\infty}}) ||u||^{\theta}_{C^2((B_{2\rho/R_1}\times (-1,2)}\notag\\
	&+2C(\rho/R_1)^{\frac{(n+2)}{2}(1-\theta)}(C_n+||V||_{L^{\infty}})||u||_{C^2((B_{2\rho/R_1}\times (-1,2)}.\notag
\end{align} 
Since $\theta <1$ and $\rho/R_1 <1$, we have $(\rho/R_1)^{\frac{(n+2)}{2}(1-\theta)}<1.$ Also there exists universal constant $C=C(\theta,s)$ such that we have $$(C_n^{\theta}+||V||^{\theta}_{L^{\infty}}) \le C 2^{||V||_{L^{\infty}}^{1/2s}}$$ and also $$(C_n+||V||_{L^{\infty}}) \le C 2^{||V||_{L^{\infty}}^{1/2s}}.$$ Hence \eqref{pf} can be written as
	\begin{align*} & (\rho/R_1)^{-((n+2)\theta +1 +a)/2}||y^{a/2}U||_{L^2(\mathbb B_{\rho}\times (-\rho^2+t_0,\rho^2+t_0))}\\ & \le C(\theta,s) 2^{||V||_{C^1}^{1/2s}}( ||u||^{\theta}_{C^2(B_{2\rho/R_1}\times (-1,2)}+||u||_{C^2(B_{2\rho/ R_1}\times (-1,2))}).\end{align*}
	Using \eqref{l1} in the above inequality we get
\begin{equation}\label{poi}C(\rho/R_1)^{-((n+2)\theta +1 +a)/2} \rho^{A/2+1} \le	C(\theta,s) 2^{||V||_{C^1}^{1/2s}}( ||u||^{\theta}_{C^2(B_{2\rho/R_1}\times (-1,2)}+||u||_{C^2(B_{2\rho/R_1}\times (-1,2))}).\end{equation}
Now multiplying on both sides of the inequality in \eqref{poi} with $(2/R_1)^{A/2+1}$ we obtain 
\begin{equation}\label{poi1}C(\rho/R_1)^{-((n+2)\theta +1 +a)/2} (2\rho/ R_1)^{A/2+1} \le	C(\theta,s)(2/R_1)^{A/2+1} 2^{||V||_{C^1}^{1/2s}}( ||u||^{\theta}_{C^2(B_{2\rho/R_1}\times (-1,2)}+||u||_{C^2(B_{2\rho/R_1}\times (-1,2))}),\end{equation}
where $A=C ||V||_{C^1}^{1/2s} +C_1$ is as in Lemma \ref{bulkdf}. Without loss of generality, we may assume that $C>2,$ then $\frac{A}{2} \ge ||V||_{C^1}^{1/2s}$ and thus the inequality in \eqref{poi1} above can be rewritten as 
\begin{align}\label{poi4}
&C(\rho/R_1)^{-((n+2)\theta +1 +a)/2} (2\rho/ R_1)^{A/2+1}\\
&\le	C(\theta,s)(2/R_1)^{\frac{A}{2}+1} 2^{\frac{A}{2}}( ||u||^{\theta}_{C^2(B_{2\rho/R_1}\times (-1,2)}+||u||_{C^2(B_{2\rho/R_1}\times (-1,2))})\notag\\
&=	C(\theta,s)(4/R_1)^{\frac{A}{2}+1}(2/R_1)( ||u||^{\theta}_{C^2(B_{2\rho/R_1}\times (-1,2)}+||u||_{C^2(B_{2\rho/R_1}\times (-1,2))}).\notag
\end{align}
Since $8\rho/R_1^2<1$, we have $(8\rho/R_1^2)^{\frac{A}{2}} \le 1,$ which gives \begin{equation}\label{poi5}(4/R_1)^{\frac{A}{2}} \le (2\rho/R_1)^{-\frac{A}{2}}.\end{equation} 	Using \eqref{poi5} in \eqref{poi4} we deduce the following estimate
\begin{align}\label{poi7}
	&C(\rho/R_1)^{-((n+2)\theta +1 +a)/2} (2\rho/ R_1)^{A/2+1}\\
	&\le	C(\theta,s)(2\rho/R_1)^{-\frac{A}{2}}(2/R_1)( ||u||^{\theta}_{C^2(B_{2\rho/R_1}\times (-1,2)}+||u||_{C^2(B_{2\rho/R_1}\times (-1,2))}).\notag
\end{align}
\eqref{poi7} can be rewritten as
\begin{align}\label{poi8}
	&C (2\rho/ R_1)^{A_1}
	\le	C(\theta,s)(2/R_1)( ||u||^{\theta}_{C^2(B_{2\rho/R_1}\times (-1,2)}+||u||_{C^2(B_{2\rho/R_1}\times (-1,2))}).
\end{align}
	where $A_1=A+1 -\frac{(n+2)\theta +1 +a}{2}= C||V||_{C^1}^{1/2s}+C_1+\frac{1-a-(n+2)\theta}{2}$.  Now since we are interested in a vanishing order estimate from below for small $\rho$, without loss of generality, we may assume that
	\begin{equation}\label{asum}
	||u||_{C^2(B_{2\rho/R_1}\times (-1,2))}< 1
	\end{equation}
	since otherwise the desired estimate in Theorem \ref{mainthrm} holds trivially. 	
	In view of \eqref{asum}, since $\theta <1 $, we can thus assert that
	\begin{equation}\label{th1}
	||u||_{C^2(B_{2\rho/R_1}\times (-1,2))} \leq 	||u||^{\theta}_{C^2(B_{2\rho/R_1}\times (-1,2))}.\end{equation}	  Thus using \eqref{th1} in \eqref{poi8} we obtain
	\begin{align}\label{final}
		\tilde C (2\rho/ R_1)^{\tilde A}
		\le	||u||_{C^2(B_{2\rho/R_1}\times (-1,2))}.
	\end{align}
where $\tilde A=A_1/\theta$ and $\tilde C= \tilde C(R_1, \theta, s, n).$ The desired vanishing order  estimate in Theorem \ref{mainthrm} thus follows from \eqref{final} in a standard way by letting $\frac{2\rho}{R_1}$ as the new $\rho$.
\end{proof}

\begin{proof}[Proof of Theorem \ref{mainthrm2}] Before we proceed with the proof, we would like to alert the reader that throughout the proof, we will use the letter $C$  to denote all purpose universal constant which might vary from line to line.

	We first note that  from a careful investigation of the proof of  Theorem \ref{mainthrm},  we find that  there exists $t_0$ such that 
	\begin{align}\label{df1}
		||u||_{W^{2,2}( B_{\rho} \times (t_0-{\rho}^2,t_0+{\rho}^2))} \ge C_1\rho^{A_0},
	\end{align}
where $C_1$ and $A_0$ is as in the Theorem \ref{mainthrm}. Overhere, $W^{2,2}$ norm of $u$ refers to
 $$ 	||u||_{W^{2,2}}\overset{def}=||u||_{L^2} + ||\n_x u||_{L^2}+ ||\n^2_x u||_{L^2}+ ||u_t||_{L^2}.$$ 
 	In the rest of the proof, for notational convenience we will denote $B_{\rho} \times (t_0-{\rho}^2,t_0+{\rho}^2)$ by $ Q_{\rho}$ and $\mathbb B_{\rho} \times (t_0-{\rho}^2,t_0+{\rho}^2)$ by $\mathbb Q_{\rho}.$ Also $||V||_{C^{2}_{x,t}}$ will be denoted by $||V||_{C^2}$. Then from the rescaled version of the estimate \eqref{ret} in Lemma \ref{reg1} we have
\begin{align}\label{rrg}
	||y^{a/2} \nabla U_{ij}||_{L^2(\mathbb Q_r)}  \le r^{-3} C(1+||V||_{C^2}) ||y^{a/2} U||_{L^2(\mathbb Q_{2r} )}
\end{align}
and also
\begin{align}\label{rrg1}
r^2||y^{a/2}  U_{t}||_{L^2(\mathbb Q_r)}+r^3||y^{a/2} \nabla U_{t}||_{L^2(\mathbb Q_r)} + r^4	||y^{a/2} U_{tt}||_{L^2(\mathbb Q_r)} \le  C(1+||V||_{C^2}) ||y^{a/2} U||_{L^2(\mathbb Q_{2r} )}.
\end{align}
Let $\phi$ be a smooth function  supported in $\overline{\mathbb B_{2{\rho}}}\times (t_0 - (2{\rho})^2, t_0 + (2{\rho})^2) $ such that $\phi \equiv 1$ in  
$\overline{\mathbb B_{\rho}} \times  (t_0-{\rho}^2,t_0+{\rho}^2).$ 
 We now apply interpolation inequality \eqref{inte}  to $f=\phi U$ and  obtain for any $0<\eta_1 <1$ that the following estimate holds
\begin{align}\label{der1}
	&||\n_x f||_{L^2(\R^{n+1})}\\ &\le C\left( \eta_1^{s}(||y^{a/2}\n \n_x f||_{L^2(\R^{n+1}\times \R_+)} +||y^{a/2}\n_xf||_{L^2(\R^{n+1}\times \R_+)})
	 + \eta_1^{-1}||f||_{L^2(\R^{n+1})} \right)\notag\\
	& \le C\big(\eta_1^{s}(|\phi|||y^{a/2}\n \n_xU||_{L^2(\mathbb Q_{2{\rho}})} +(|\n\phi|+|\phi|)||y^{a/2}\n U||_{L^2(\mathbb Q_{2{\rho}})}\notag\\
	& +(|\n^2 \phi|+|\n\phi|)||y^{a/2} U||_{L^2(\mathbb Q_{2{\rho}})})+ \eta_1^{-1}||u||_{L^2( Q_{2{\rho}})} \big)\notag\\
	 & \le   \eta_1^{s}{\rho}^{-2}C(1+||V||_{C^2}) ||y^{a/2} U||_{L^2(\mathbb Q_{4{\rho}} )} +C \eta_1^{-1}||u||_{L^2( Q_{2{\rho}})},\notag
\end{align} 
where  in the last inequality, we used the rescaled versions of the regularity estimate in Lemma \ref{reg1}.  From \eqref{der1} it follows
\begin{align}\label{d1f}
	||\n_x u||_{L^2(Q_{\rho})} \le  C \eta_1^{s}{\rho}^{-2}(1+||V||_{C^2}) ||y^{a/2} U||_{L^2(\mathbb Q_{4{\rho}} )} +C \eta_1^{-1}||u||_{L^2( Q_{2{\rho}})}.
\end{align}
Similarly  by applying \eqref{inte} to $\nabla_x f$ and also by using  \eqref{rrg} we get  for any $0<\eta<1$ 
\begin{align}\label{der2}
	||\n_x^2 u||_{L^2( Q_{\rho})} \le C\eta^{s}{\rho}^{-3}(1+||V||_{C^2}) ||y^{a/2} U||_{L^2(\mathbb Q_{4{\rho}} )} +C\eta^{-1}||\n_x f||_{L^2(\R^{n+1})}.
\end{align}
Now using \eqref{der1} in \eqref{der2} we obtain
\begin{align}\label{der22}
		||\n_x^2 u||_{L^2( Q_\rho)} &\le \eta^{s}{\rho}^{-3}C(1+||V||_{C^2}) ||y^{a/2} U||_{L^2(\mathbb Q_{4{\rho}} )} \\
		&\;\;\;+\eta^{-1}\eta_1^{s}{\rho}^{-2}C(1+||V||_{C^2}) ||y^{a/2} U||_{L^2(\mathbb Q_{4{\rho}} )} +C(\eta\eta_1)^{-1}||u||_{L^2( Q_{2{\rho}})}.\notag
\end{align}
 We now take $\eta_1 =\eta^3$.  Then we find 
\begin{align}\label{ec1}
\eta^{-1}\eta_1^{s}=	\eta^{3s-1}. \end{align}
Also observe that $3s-1 \geq s$ as $s \geq 1/2$.   Eventually we take $\eta <<1$. In view of this, we thus obtain from \eqref{der22}
\begin{align}\label{d2f}
	||\n^2_x u||_{L^2(Q_{\rho})} &\le \eta^{s}{\rho}^{-3}C(1+||V||_{C^2}) ||y^{a/2} U||_{L^2(\mathbb Q_{4{\rho}} )}
	 +C\eta^{-4}||u||_{L^2( Q_{2{\rho}})}.
\end{align}
Similarly by applying \eqref{inte1} to $f= U \phi$ and also by using the estimate \eqref{rrg1}  we get
\begin{equation}\label{dkf}
||u_t||_{L^2(Q_{\rho})} \leq \eta^{s}{\rho}^{-4}C(1+||V||_{C^2}) ||y^{a/2} U||_{L^2(\mathbb Q_{4{\rho}} )} + C\eta^{-1}||u||_{L^2( Q_{2{\rho}})}.\end{equation} Therefore from \eqref{d1f}, \eqref{d2f} and \eqref{dkf} we have
\begin{align}\label{nop}
	||u||_{W^{2,2}(Q_{\rho})} \le C\eta^{-4}||u||_{L^2(Q_{4{\rho}})} + (C/{\rho}^4)\eta^s 2^{||V||_{C^2}^{1/2s}}.
\end{align}
Note that in \eqref{nop} we used that $||y^{a/2} U||_{L^2(\mathbb Q_{4{\rho}} )} \leq C$. If we now  take $\eta^s = (C_1/2C){\rho}^{A_0 +4 + ||V||_{C^2}^{1/2s}}$ and use \eqref{df1}, we get 
\begin{equation}\label{khg}C_1{\rho}^{A_0} \le C \eta^{-4}||u||_{L^2(Q_{4{\rho}})} +  \frac{C_1}{2}{\rho}^{A_0}(2{\rho})^{||V||_{C^2}^{1/2s}}.\end{equation}
In \eqref{khg} above, we can now absorb the term $\frac{C_1}{2}{\rho}^{A_0}(2{\rho})^{||V||_{C^2}^{1/2s}}$ in the left hand side and  then the desired estimate \eqref{main2} is seen to follow. 

\end{proof}

\end{document}